\title{Corrections to ``Homotopy theory of nonsymmetric operads, I, II''}
\author{Fernando Muro}
\address{Universidad de Sevilla,
Facultad de Matem\'aticas,
Departamento de \'Algebra,
Avda. Reina Mercedes s/n,
41012 Sevilla, Spain}
\email{fmuro@us.es}
\urladdr{http://personal.us.es/fmuro}
\newtheorem{theorem}{Theorem}
\newtheorem{corollary}[theorem]{Corollary}
\newtheorem{lemma}[theorem]{Lemma}
\newtheorem{proposition}[theorem]{Proposition}
\newtheorem*{nh}{New Hypothesis}
\theoremstyle{definition}
\newtheorem{definition}[theorem]{Definition}
\newtheorem{remark}[theorem]{Remark}
\newtheorem{exm}[theorem]{Example}
\numberwithin{equation}{section}
\newcommand{\C}[1]{\mathscr{#1}}
\newcommand{\operad}[1]{\mathrm{Op}(#1)}
\newcommand{\oper}{\operatorname{oper}}
\newcommand{\algebra}[2]{\mathrm{Alg}_{#2}(#1)}
\newcommand{\graphs}[2]{\mathrm{Graph}_{#2}(#1)}
\newcommand{\mor}[1]{\operatorname{Mor}(#1)}
\def\r{\rightarrow} 
\def\into{\rightarrowtail}
\def\onto{\twoheadrightarrow}
\newcommand{\id}[1]{\mathrm{id}_{#1}}
\def\dos{\mathbf{2}}
\def\st{\stackrel} 
\def\unit{\mathbb{I}} 
\def\To{\longrightarrow}
\def\unit{\mathbb{I}} 
\def\colim{\mathop{\operatorname{colim}}}
\begin{document}

\begin{abstract}    
We correct a mistake in the construction of push-outs along free morphisms of algebras over a nonsymmetric operad  in \cite{htnso}, and we fix the affected results in \cite{htnso,htnso2}.
\end{abstract}

\maketitle


\renewcommand{\theequation}{\arabic{section}-\arabic{equation}}

\section*{Introduction}

In \cite[\S8]{htnso} we give a wrong construction of push-outs along free maps in the category of algebras over an operad (nonsymmetric). A counterexample provided by Donald Yau is presented below. Our operads live in a symmetric monoidal category $\C V$, but algebras live in a possibly nonsymmetric monoidal category $\C C$ with a central action of $\C V$ via a strong monoidal functor $z\colon\C V\r\C C$ equipped with some extra structure \cite[\S7]{htnso}. This situation generalizes the usual \emph{symmetric case}\footnote{This does not refer to symmetric operads, which are not considered here. The symmetric case would be, more generally, when $\C C$ and $z$ are symmetric monoidal, but this apparently more general case can be reduced to the former.}, where $\C C=\C V$ and $z$ is the identity. However, contrary to what we intended and claimed in the introduction of \cite{htnso}, \cite[\S8]{htnso} does not generalize Harper's construction \cite[Proposition 7.32]{htmommc}, which is the correct 
one. We here fix this mistake and its consequences in \cite{htnso,htnso2}. The main results of these papers, presented in their introductions, remain true as stated modulo a modification in the nonsymmetric monoid axiom \cite[Definition 9.1]{htnso} and a strengthening in the hypotheses of \cite[Theorem 1.13 and Corollary 1.14]{htnso2}. These 
changes do not affect applications. Moreover, the results which are purely on operads, not on  algebras, remain completely unaffected.

The corrections are presented in Section 1. The required proofs, in Section 3, are based in the categorical constructions of Section 2. As a byproduct, given a cofibrant operad $\mathcal O$ in $\C V$, we prove homotopy invariance for enveloping operads of $\mathcal O$-algebras in $\C V$ with underlying cofibrant object. This result, which is of indepedent interest, has been obtained in \cite[\S17.4]{mof} for symmetric operads in the category $\C V$ of chain complexes over a commutative ring.


\section{Corrections}\label{s1}

Consider the following push-out diagram in the category of algebras in $\C C$ over a certain operad $\mathcal O$ in $\C V$, where the top arrow is a free $\mathcal O$-algebra map, 
\begin{equation}\label{algebrapo}
\xymatrix{
\mathcal F_{\mathcal O}(Y)\ar[r]^-{\mathcal F_{\mathcal O}(f)}\ar[d]_g\ar@{}[rd]|{\text{push}}&\mathcal F_{\mathcal O}(Z)\ar[d]^{g'}\\
A\ar[r]_-{f'}&B
} 
\end{equation}
The adjoints of $g$ and $g'$ are maps $\bar g\colon Y\r A$ and $\bar g'\colon Z\r A$ in $\C C$, respectively. 

The constructions in \cite[Lemmas 8.1 and 8.2]{htnso}, contrary to what we claimed in \cite[Theorem 8.3]{htnso}, do not yield an $\mathcal O$-algebra. The origin of the mistake is in the map \cite[(15) ]{htnso}, which must be replaced with a quotient in the category $\mor{\C C}$ of morphisms in $\C C$ \cite[\S4]{htnso}. This quotient is related to the \emph{enveloping functor-operad} $\mathcal O_A$ of the $\mathcal O$-algebra $A$, see Section \ref{s2}. A (nonsymmetric) \emph{functor-operad} $F=\{F(n)\}_{n\geq 0}$ or \emph{multitensor} \cite{ahoec} in $\C C$ is a sequence of functors $F(n)\colon\C C^n\r\C C$ equipped with composition and unit natural transformations
\begin{align*}
\circ_i\colon F (p)(\st{i-1}{\dots\dots}, F (q),\st{p-i}{\dots\dots})&\longrightarrow F (p+q-1),\quad 1\leq i\leq p,\; q\geq 0,\\
u\colon \id{\C C}&\longrightarrow F (1),
\end{align*}
satisfying relations similar to operads. 
In the following corrected statement of \cite[Lemma 8.1]{htnso} we simply use the functor-operad structure and the fact that $\mathcal O_A(0)=A$ (a functor $\C C^0\r\C C$ is a plain object in $\C C$ since $\C C^0$ is the final category).

Recall from \cite[\S4]{htnso} that a map $f\colon Y\r Z$ in $\C C$ is the same as a functor $f\colon\dos\r\C C$ from the poset $\dos=\{0<1\}$. Given a functor $F\colon \C C^{n}\r \C C$ and maps $f_i\colon Y_i\r Z_i$, $1\leq i\leq n$, in $\C C$, $F(f_1,\dots,f_n)$ usually denotes the induced map $F(Y_1,\dots,Y_n)\r F(Z_1,\dots,Z_n)$. However, we also denote by $F(f_1,\dots,f_n)$ the composite functor $$\xymatrix@C=20pt{\dos^n\ar[rr]^-{f_1\times\cdots\times f_n}&& \C C^n\ar[r]^-{F}&\C C}.$$ Moreover, unless otherwise indicated, in this paper $F(f_1,\dots,f_n)$ denotes its latching map at $(1,\dots,1)\in\dos^n$.

\counterwithin{theorem}{section}
\renewcommand{\thesection}{\arabic{section}}
\setcounter{section}{8}
\setcounter{theorem}{0}

\begin{lemma}\label{pind2}
	There is a sequence in $\C C$
	\[A=B_0\st{\varphi_1}\To B_1\r \cdots \r B_{t-1}\st{\varphi_t}\To B_t\r \cdots,\]
	such that the morphism $\varphi_t$, $t\geq 1$, is given by the push-out square
	$$\xymatrix@C=40pt{
		\bullet\ar[r]^-{\mathcal O_A(t)(f,\dots,f)}\ar[d]_{\psi_t}\ar@{}[rd]|{\text{push}}&\bullet\ar[d]^{\bar\psi_t}\\
		B_{t-1}\ar[r]_-{\varphi_{t}}&B_t
		}$$
	where the attaching map $\psi_t$ is defined by the following maps, $1\leq i\leq t$,
	$$\xymatrix@R=15pt{
		\mathcal O_A(t)(Z,\st{i-1}{\dots\dots},Z,Y,Z,\st{t-i}{\dots\dots},Z)\ar[d]^{\text{defined by }\bar g\colon Y\r A}\\
		\mathcal O_A(t)(Z,\st{i-1}{\dots\dots},Z,A,Z,\st{t-i}{\dots\dots},Z)\ar[d]^{\circ_i}\\
		\mathcal O_A(t-1)(Z,\st{t-1}{\dots\dots},Z)\ar[d]_{\bar\psi_{t-1}\text{ if }t>1}^{\text{the identity if }t=1}\\
		B_{t-1}
		}$$
\end{lemma}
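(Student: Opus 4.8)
The plan is to construct the objects $B_t$ together with the morphisms $\varphi_t$ and $\bar\psi_t$ by induction on $t\geq 0$, starting from $B_0=A$ and $\bar\psi_0=\id{A}$; this convention turns the ``identity if $t=1$'' clause of the statement into the ``$\bar\psi_{t-1}$'' clause for $t=1$. Given $B_{t-1}$ and $\bar\psi_{t-1}$, the only point requiring proof is that the $t$ composites displayed in the statement, one for each $1\leq i\leq t$, assemble into a single morphism $\psi_t$ out of the source of the latching map $\mathcal{O}_A(t)(f,\dots,f)$; once $\psi_t$ is available, the objects $B_t$ and the morphisms $\varphi_t$, $\bar\psi_t$ are produced by forming the displayed push-out square (which exists since $\C C$ is cocomplete), and the induction proceeds. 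This is the non-symmetric, functor-operad counterpart of Harper's filtration \cite[Proposition 7.32]{htmommc}, and it is precisely the well-definedness of $\psi_t$ which fails for the naive map \cite[(15)]{htnso}.

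By the conventions recalled above, the source $\ell_t$ of $\mathcal{O}_A(t)(f,\dots,f)$ is the colimit over the punctured cube $\dos^t\setminus\{(1,\dots,1)\}$ of the diagram $D$ sending $(a_1,\dots,a_t)$ to $\mathcal{O}_A(t)(f(a_1),\dots,f(a_t))$; here $D(e_i)$ is $\mathcal{O}_A(t)$ evaluated at $Y$ in the $i$-th slot and $Z$ elsewhere, $e_i$ being the vertex with a single coordinate $0$, in position $i$. The $e_i$ are the maximal objects of $\dos^t\setminus\{(1,\dots,1)\}$; every object lies below some $e_i$, and an object below both $e_i$ and $e_j$ ($i\neq j$) lies below $e_i\wedge e_j$ (the vertex with $0$'s exactly in positions $i,j$). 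Hence a morphism $\psi_t\colon\ell_t\to B_{t-1}$ is the same as a family of morphisms $c_i\colon D(e_i)\to B_{t-1}$ ($1\leq i\leq t$) such that, for $i<j$, the composite of $D(e_i\wedge e_j)\to D(e_i)$ with $c_i$ equals the composite of $D(e_i\wedge e_j)\to D(e_j)$ with $c_j$, where these two structure maps are $\mathcal{O}_A(t)$ applied to $f$ in slot $j$, respectively slot $i$. Taking $c_i$ to be the composite written in the statement, we are left to check this pairwise compatibility.

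Fix $i<j$, and for a morphism $h$ write $h_{(k)}$ for the induced map applying $h$ in the $k$-th argument of $\mathcal{O}_A(-)$ and the identity elsewhere (the arity being clear from context). In the composite $D(e_i\wedge e_j)\to D(e_i)\to B_{t-1}$, the insertion $\bar g_{(i)}$ and the contraction $\circ_i$ may be carried out before applying $f_{(j)}$, by the naturality of $\circ_i$ in its other variables (slot $j$ sits in position $j-1$ after contracting slot $i$, as $i<j$); this rewrites the composite as $\bar\psi_{t-1}\circ f_{(j-1)}\circ\circ_i\circ\bar g_{(i)}$. Now $f_{(j-1)}$ factors as a structure map into $\ell_{t-1}$ followed by $\mathcal{O}_A(t-1)(f,\dots,f)$, and the push-out square defining stage $t-1$ gives $\bar\psi_{t-1}\circ\mathcal{O}_A(t-1)(f,\dots,f)=\varphi_{t-1}\circ\psi_{t-1}$; substituting the recursive formula for $\psi_{t-1}$ at index $j-1$ rewrites the composite as $\varphi_{t-1}\circ\bar\psi_{t-2}\circ\circ_{j-1}\circ\bar g_{(j-1)}\circ\circ_i\circ\bar g_{(i)}$, i.e.\ $\varphi_{t-1}\circ\bar\psi_{t-2}$ precomposed with: insert $\bar g$ in slot $i$, contract slot $i$, insert $\bar g$ in the new slot $j-1$, contract slot $j-1$. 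Treating the composite $D(e_i\wedge e_j)\to D(e_j)\to B_{t-1}$ in the mirror-image way (contracting slot $j$ first) yields $\varphi_{t-1}\circ\bar\psi_{t-2}\circ\circ_i\circ\bar g_{(i)}\circ\circ_j\circ\bar g_{(j)}$, i.e.\ the same two insertions-and-contractions in the opposite order. That these agree follows from the bifunctoriality of $\mathcal{O}_A(t)$ (insertions of $\bar g$ in disjoint slots commute) together with the functor-operad relation corresponding to the operad parallel-composition axiom $(x\circ_i a)\circ_{j-1}b=(x\circ_j b)\circ_i a$ for $i<j$, applied with the nullary operations of $\mathcal{O}_A(0)=A$ coming from $\bar g$. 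This gives the compatibility, hence $\psi_t$; for $t=1$ there is a single index and nothing to verify, and for $t=2$ one uses $\bar\psi_0=\id{A}$.

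The \emph{main obstacle} is this last step: performing the two mirror-image unwindings while keeping exact track of how slots are renumbered under the successive contractions $\circ_i$ and $\circ_{j-1}$, and matching the resulting bookkeeping against the precise instances of the naturality of the composition transformations, the push-out identity of the previous stage, the recursive shape of $\psi_{t-1}$, and the parallel-composition relation of the functor-operad. Arranging that $\psi_t$ honestly factors through the latching object of $\mathcal{O}_A(t)(f,\dots,f)$ in this way --- a genuine morphism in $\mor{\C C}$ rather than the ad hoc map \cite[(15)]{htnso} --- is exactly what the correction amounts to.
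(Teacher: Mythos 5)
Your proposal is correct and follows essentially the same route as the paper, which constructs the filtration inductively and defers the details to Harper's symmetric-case argument \cite[Proposition 7.32]{htmommc}: the attaching map $\psi_t$ is assembled from its restrictions to the maximal faces of the punctured cube $\dos^t\setminus\{(1,\dots,1)\}$, and the pairwise compatibility at $e_i\wedge e_j$ is checked using naturality, the push-out square of the previous stage, and the parallel-composition relation of the functor-operad applied with the arity-$0$ component $\mathcal O_A(0)=A$. Your explicit bookkeeping of the slot shift $j\mapsto j-1$ under $\circ_i$ is precisely the verification the paper leaves implicit.
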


There is a canonical map $\text{\cite[(15)]{htnso}}\r\mathcal O_A(t)(f,\dots, f)$ in $\mor{\C C}$, actually a projection onto a coequalizer, see Section \ref{s2}. The map \cite[(15)]{htnso} is a coproduct indexed by the integers $n\geq 1$ and the subsets $S\subset\{1,\dots,n\}$ of cardinality $t$. Hence, the vertical composites in the following diagram
	$$\xymatrix@C=40pt{
		\bullet\ar[r]^-{\text{\cite[(15)]{htnso}}}\ar[d]\ar@/_20pt/[dd]_-{(\psi_t^{n,S})_{n,S}}
		\ar@{}[rd]|{\text{canonical map}}
		&\bullet\ar[d]\ar@/^20pt/[dd]^-{(\bar \psi_t^{n,S})_{n,S}}\\
		\bullet\ar[r]^-{\mathcal O_A(t)(f,\dots,f)}\ar[d]^-{\psi_t}\ar@{}[rd]|{\text{push}}&\bullet\ar[d]_-{\bar\psi_t}\\
		B_{t-1}\ar[r]&B_t
	}$$
are defined by their restrictions. With this notation, the rest of results in \cite[\S8]{htnso} are correct as stated. The same proof as in the symmetric case \cite[Proposition 7.32]{htmommc} works here, modulo slight changes in notation (Harper works in the more general context of left $\mathcal O$-modules instead of $\mathcal O$-algebras). In that case, the enveloping functor-operad is simply given by $\mathcal O_{A}(n)(X_{1},\dots,X_{n})=\mathcal O_{A}(n)\otimes\bigotimes_{i=1}^{n}X_{i}$, where $\mathcal O_{A}$ on the right denotes the enveloping operad of $A$. The order of factors is also important in our case, but fortunately Harper's notation, using symmetric groups, takes care of this. We think this is because the natural setting for this kind of result is precisely our nonsymmetric context. In the nonsymmetric case, $\Sigma_{p+q}/\Sigma_{p}\times \Sigma_{q}$ should be interpreted as the set of $(p,q)$-shuffles (each coset is represented by a unique $(p,q)$-shuffle).

\setcounter{section}{1}
\setcounter{theorem}{0}

\begin{exm}\label{counter0}
The following counterexample to the constructions in \cite[\S8]{htnso} is due to Donald Yau, to whom we are very grateful. We place ourselves in the symmetric  case, taking $\C V$ to be the category of complexes over a commutative ring and $\mathcal O=\mathtt{uAss}^{\C V}$ the unital associative operad. Let $A=0$ be the final  $\mathtt{uAss}^{\C V}$-algebra and $f\colon 0=Y\r Z$ a map from the zero complex. According to \cite[\S8]{htnso}, the push-out of $A$ along the free $\mathtt{uAss}^{\C V}$-algebra map spanned by $f$ is  $\bigoplus_{t\geq 1}Z^{\otimes t}$. The real push-out is $A$ again, since this $\mathtt{uAss}^{\C V}$-algebra has the unusual property that any morphism $A\r B$ must be an isomorphism.

Actually, the statements in \cite[\S8]{htnso} do not yield Schwede--Shipley's construction \cite{ammmc} of push-outs along free maps when $\mathcal O=\mathtt{uAss}^{\C V}$, but the corrected version does, see Proposition \ref{uass} below. 
\end{exm}


In \cite{htnso, htnso2}, we derived some results from the homotopical properties of  \cite[(15)]{htnso}, which are nice and follow easily from the push-out product axiom. We should use $\mathcal O_{A}(t)(f,\dots,f)$ instead, whose  homotopical properties are unfortunately worse and more difficult to establish. The first affected result is \cite[Proposition 9.2]{htnso}, where we must modify the statement of (2). The full correct statement is the following.

\setcounter{section}{9}
\setcounter{theorem}{1}

\begin{proposition}\label{dura}
Consider the push-out diagram \eqref{algebrapo} in $\algebra{\mathcal O}{\C C}$. 
\begin{enumerate}
\item If $f$ is a trivial cofibration in $\C C$ then the underlying morphism $f'\colon A\r B$ in $\C C$ is a relative $K'$-cell complex, where $K'$ is the class in Definition \ref{mnax}.
\item Suppose that $f$ is a cofibration in $\C C$ and that one of the following statements holds:
\begin{enumerate}
\item $A$ is a cofibrant $\mathcal O$-algebra and $\mathcal O(n)$ is cofibrant in $\C V$ for $n\geq 0$.
\item $\C V$ has cofibrant tensor unit, $\mathcal O$ is a cofibrant operad or there is a cofibration $\mathtt{Ass}^{\C V}\into \mathcal O$ or $\mathtt{uAss}^{\C V}\into \mathcal O$ in $\operad{\C V}$, and the $\mathcal O$-algebra $A$ in~$\C C$ has underlying cofibrant object.
\end{enumerate}
Then the morphism $f'\colon A\r B$ is a cofibration in $\C C$.
\end{enumerate}
\end{proposition}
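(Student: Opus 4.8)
The plan is to deduce the proposition from the filtration established in Lemma~\ref{pind2}, which describes the underlying map $f'\colon A\r B$ in $\C C$ as a (possibly transfinite) composite of push-outs along the latching maps $\mathcal O_A(t)(f,\dots,f)$. Since cofibrations and trivial cofibrations in $\C C$ are closed under transfinite composition and push-out, in each of the three cases it suffices to show that $\mathcal O_A(t)(f,\dots,f)$ lies in the appropriate class: for (1), that it is a trivial cofibration (more precisely a relative $K'$-cell complex), and for (2a), (2b), that it is a cofibration. The key technical input is a structural description of $\mathcal O_A(t)(f,\dots,f)$, the latching map at $(1,\dots,1)\in\dos^t$ of the functor $\dos^t\r\C C$ obtained from $f_1\times\cdots\times f_t$ followed by $\mathcal O_A(t)$; one must unwind how the enveloping functor-operad $\mathcal O_A(t)(X_1,\dots,X_t)$ is built from $\mathcal O$, from the $\mathcal O$-algebra structure of $A$, and from the tensor products $X_1\otimes\cdots\otimes X_t$ with the central $\C V$-action, as set up in Section~\ref{s2}.

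For part (1), I would argue that when $f$ is a trivial cofibration, so is each $\mathcal O_A(t)(f,\dots,f)$. The idea is that $\mathcal O_A(t)$ is a functor built (via coequalizers/colimits) out of the symmetric monoidal structure of $\C V$ acting centrally on $\C C$ and iterated tensor powers; its latching map is controlled by the pushout-product (cube) construction applied to $f$ in several slots. Since $f$ is a trivial cofibration and $\C C$ is a monoidal model category satisfying the push-out product axiom, the relevant latching/cube maps built from copies of $f$ are again trivial cofibrations, and tensoring with the fixed objects coming from $\mathcal O$ and from $A$ — which one handles through the class $K'$ of Definition~\ref{mnax}, designed precisely to absorb these operations — keeps us inside relative $K'$-cell complexes. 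This is the mildest case because "trivial cofibration $\otimes$ anything cofibrant-ish" behaves well, and no cofibrancy hypothesis on $A$ or $\mathcal O$ is needed.

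For part (2), where $f$ is only a cofibration, the situation is genuinely harder: to conclude that $\mathcal O_A(t)(f,\dots,f)$ is a cofibration one needs the functors $\mathcal O_A(t)$ to preserve cofibrations in a suitable sense, which in turn requires cofibrancy of the pieces from which $\mathcal O_A(t)$ is assembled. Under hypothesis (2a) — $A$ a cofibrant $\mathcal O$-algebra and each $\mathcal O(n)$ cofibrant in $\C V$ — one invokes the homotopy invariance / cofibrancy of the enveloping operad $\mathcal O_A$ (the byproduct advertised in the introduction): the enveloping functor-operad $\mathcal O_A(t)(-,\dots,-)$ is then, up to the coequalizer presentation, a homotopically well-behaved tensor construction, so its latching map on cofibrations is a cofibration by iterated push-out product. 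Under hypothesis (2b) one instead exploits that a cofibration $\mathtt{Ass}^{\C V}\into\mathcal O$ or $\mathtt{uAss}^{\C V}\into\mathcal O$, together with cofibrancy of the tensor unit, forces enough cofibrancy of the operadic pieces and of $\mathcal O_A(t)$ acting on the underlying-cofibrant object $A$; the explicit shuffle description of $\mathcal O_A(t)$ (the nonsymmetric interpretation of $\Sigma_{p+q}/\Sigma_p\times\Sigma_q$ as $(p,q)$-shuffles mentioned after Lemma~\ref{pind2}) makes this bookkeeping tractable.

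The main obstacle I anticipate is exactly the cofibrancy analysis of the latching map $\mathcal O_A(t)(f,\dots,f)$ in case (2): unlike the old (incorrect) map \cite[(15)]{htnso}, which was a plain coproduct amenable to the push-out product axiom, the corrected map is a coequalizer (a quotient in $\mor{\C C}$), so preservation of cofibrations under passing to this quotient is not automatic and must be extracted from cofibrancy of the acting objects. Concretely, I expect to need a lemma — this is where the Section~\ref{s2} enveloping-operad homotopy-invariance result is essential — saying that $\mathcal O_A(t)$, as a functor $\C C^t\r\C C$, sends (cofibration, \dots, cofibration) to a cofibration on latching maps whenever $A$ is cofibrant (resp.\ underlying-cofibrant) under the stated conditions on $\mathcal O$; granting that lemma, the rest of the proof of Proposition~\ref{dura} is the formal closure argument via Lemma~\ref{pind2}.
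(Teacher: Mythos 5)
Your overall reduction is the right one and matches the paper: by the corrected Lemma \ref{pind2}, the underlying map $f'$ is a transfinite composition of push-outs of the maps $\mathcal O_A(t)(f,\dots,f)$, so everything hinges on placing these maps in the appropriate class. There are, however, two problems, one per part.

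For part (1) your justification rests on a false premise. You argue that the latching map $\mathcal O_A(t)(f,\dots,f)$ is controlled by push-out products of copies of $f$ tensored with fixed objects, so that the push-out product axiom makes it a trivial cofibration. That is precisely the reasoning that worked for the old, incorrect map \cite[(15)]{htnso} and that fails for the corrected one, which is a coequalizer quotient of it: a quotient of a trivial cofibration need not be a trivial cofibration, and the paper explicitly warns that the homotopical properties of $\mathcal O_A(t)(f,\dots,f)$ are worse than those of the old map. Fortunately no such claim is needed: part (1) is a tautology, because Definition \ref{mnax} puts the maps $\mathcal O_A(t)(f,\dots,f)$, for $f$ a trivial cofibration, into $K'$ by fiat (they are listed as a separate family, exactly because they do \emph{not} reduce to the tensor-type elements $f\otimes X$, $X\otimes f$). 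Hence $f'$ is a relative $K'$-cell complex directly from Lemma \ref{pind2}, with no homotopical analysis at all.

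For part (2) you correctly isolate the key lemma --- that under hypotheses (a) or (b) the latching map $\mathcal O_A(t)(f,\dots,f)$ is a cofibration whenever $f$ is a cofibration between cofibrant objects --- and you are right that granting it the rest is a formal closure argument. But that lemma is where essentially all the work lies, and your sketch of why it holds (``a homotopically well-behaved tensor construction, so its latching map on cofibrations is a cofibration by iterated push-out product'') does not survive contact with the coequalizer presentation: $\mathcal O_A(t)$ is not a tensor construction, and its cofibrancy as a functor in the sense of Definition \ref{loco} must be established by induction over a cell structure of $A$, using the filtration of $\mathcal O_B$ over $\mathcal O_A$ along an algebra push-out (Proposition \ref{transAcof}); case (a) is then the arity $0$ part of Proposition \ref{mensgen}. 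Case (b) requires a second, independent induction over a cell structure of the operad $\mathcal O$ itself, via the filtration of $\mathcal P_A$ over $\mathcal O_A$ along an operad push-out (Proposition \ref{horror}), seeded by the explicit computations of the enveloping functor-operads for $\mathtt{Ass}^{\C V}$ and $\mathtt{uAss}^{\C V}$ (Propositions \ref{uass} and \ref{ass}); this is what the notion of excellent operad and Propositions \ref{sonex1}, \ref{sonex2} and \ref{marrown} package. Your proposal names the right inputs but supplies neither induction, so as written it is an outline with the essential step deferred rather than a proof.
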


Although the statement of Proposition \ref{dura} (1) remains as in \cite{htnso}, we must modify \cite[Definition 9.1]{htnso} in the following way, in order to obtain it as a trivial consequence of \cite[\S8]{htnso} (corrected version).

\setcounter{section}{9}
\setcounter{theorem}{0}

\begin{definition}\label{mnax}
The \emph{monoid axiom} in the $\C V$-algebra $\C C$ says that relative $K'$-cell complexes are weak equivalences, where $K'$ is the following class of morphisms,
\begin{align*}
K'=\{&f\otimes X, X\otimes f, \mathcal O_A(t)(f,\dots,f)\,;\,X\text{ is an object in }\C C,
f\text{ is a trivial}\\[-2pt]
&\text{cofibration in }\C C, \,
\mathcal O\text{ is an operad in }\C V,\,A\text{ is an $\mathcal O$-algebra in }\C C,\, t\geq 1\}.
\end{align*}
\end{definition}

This nonsymmetric monoid axiom is apparently stronger than the one in \cite{htnso}, but still equivalent to Schwede--Shipley's in the symmetric case, since $X\otimes f\cong f\otimes X$ and $\mathcal O_A(t)(f,\dots,f)=\mathcal O_{A}(t)\otimes f^{\odot t}$, $t\geq 1$, where $\mathcal O_{A}(t)$ on the right is an object and $f^{\odot t}$ is a trivial cofibration by the push-out product axiom. Moreover, it holds in our main nonsymmetric example, the category $\graphs{\C V}{S}$ of $\C V$-graphs with object set $S$, i.e.~\cite[Proposition 10.3]{htnso} remains valid. A new proof is required though, but the argument is very similar. It is easy to check (using the symmetry of $\C V$) that any map in $K'$ is componentwise a coproduct of maps, each of which is the tensor product of a single object in $\C V$ with a push-out product of components of $f$, which are trivial cofibrations in $\C V$. Such a push-out product is 
again a trivial 
cofibration 
by the push-out product axiom. Hence, any $K'$-cell complex is componentwise a $K$-cell complex in the sense of \cite[Definition 6.1]{htnso}, and therefore a weak equivalence by the monoid axiom for $\C V$.

Proposition \ref{dura} (2) (a) is the arity $0$ part of Proposition \ref{mensgen}, and (b) follows from Propositions \ref{sonex1}, \ref{sonex2}, and \ref{marrown}. Conditions (a) and (b) are however not necessary for $f'$ to be a cofibration. For instance, in the symmetric case, suppose that all objects in $\C V$ are cofibrant.  Then, for any operad $\mathcal O $ and any $\mathcal{O}$-algebra $A$, $\mathcal O_{A}(t)\otimes f^{\odot t}$ is a cofibration for all $t\geq 1$ by the push-out product axiom. Hence $f'$ is a transfinite composition of cofibrations in $\C V$ by \cite[\S8]{htnso} (corrected version). The same holds for $\C C=\graphs{\C V}{S}$. Nevertheless, we can give the following counterexample to the original statement of \cite[Proposition 9.2 (2)]{htnso}.

\setcounter{section}{1}
\setcounter{theorem}{1}

\begin{exm}\label{counter1}
Assume we are in the symmetric case with $\C V$ the category of chain complexes over a commutative ring $\Bbbk$. Let $\mathcal O$ be the operad whose algebras are non-unital DG-algebras $A$ with $A^3=0$. This operad has a presentation with a degree $0$ generator  $\mu\in\mathcal O(2)$ and two arity $3$ relations $\mu\circ_{1}\mu=0=\mu\circ_{2}\mu$. Moreover, $\mathcal O(1)=\Bbbk\cdot u$, $\mathcal O(2)=\Bbbk\cdot\mu$, and $\mathcal O(n)=0$ otherwise, so $\mathcal O$ is aritywise cofibrant. The enveloping operad of an $\mathcal O$-algebra $A$ is $\mathcal O_A(0)=A$, $\mathcal O_A(1)=\Bbbk\oplus A/A^{2}\oplus A/A^{2}$, $\mathcal O_A(2)=\Bbbk$, and $0$ otherwise, see Proposition \ref{cubo0} below. In general, $A\amalg\mathcal F_{\mathcal O}(\Bbbk)=\bigoplus_{n\geq 0}\mathcal O_{A}(n)$ is the direct sum of the components of the enveloping operad, see \cite[Proposition 7.28]{htmommc}, and the inclusion of the first factor $f'\colon A\hookrightarrow A\amalg\mathcal F_{\mathcal O}(\Bbbk)$ is the inclusion of the 
first
direct summand $\mathcal O_{A}(0)=A$. This inclusion is an $\mathcal O$-algebra cofibration since it is $f'$ in \eqref{algebrapo} for $f\colon 0\into \Bbbk$.

Let $\Bbbk=\mathbb Z$ and $A$ be the $\mathcal O$-algebra with two degree $0$ generators $x,y$ satisfying the relations $x^{2}=2y, xy=yx=y^{2}=0$. This algebra is $A=\mathbb Z\cdot x\oplus \mathbb Z\cdot y$ concentrated in degree $0$, so it is cofibrant as a complex. The cokernel of $f'$ in $\C V$ contains $\mathcal O_{A}(1)$, which is not free since $A/A^{2}=\mathbb Z\oplus\mathbb Z/2$, so $f'$ is not a cofibration in $\C V$.
\end{exm}

The modifications made in \cite[Proposition  \ref{dura}]{htnso} have no impact on \cite[Lemma 9.4 and Corollary 9.5]{htnso}, however \cite[Lemma 9.6 and Theorem 1.3]{htnso} require a new proof. They follow from the arity $0$ part of Proposition \ref{masgen}. Actually, \cite[Theorem 6.7]{htnso2}, which is \cite[Theorem 1.3]{htnso} with one less hypothesis, follows in this way. Similarly \cite[Theorem D.4]{htnso2}, which generalizes \cite[Theorem 6.7]{htnso2}, see Remark \ref{ende}. The rest of results in \cite{htnso} are unaffected by the corrections. We now move to \cite{htnso2}.

%

\addtocounter{theorem}{1}

In \cite[Remark 6.4]{htnso2}  we give a short description of the results in \cite[\S8]{htnso}. Since we have corrected \cite[Lemma 8.1]{htnso}, we must modify \cite[Remark 6.4]{htnso2} accordingly. More precisely \cite[(6-2)]{htnso2} should be replaced with $\mathcal O_A(t)(f,\dots,f)$. This modification  forces us to give a new proof of \cite[Proposition 7.3]{htnso2}. It is the arity $0$ part of Proposition \ref{masgen1/2}. We must replace the monoid axiom in \cite[Definition 2.3 (3)]{htnso2} with the new version of \cite[Definition \ref{mnax}]{htnso} above. In addition, the following new hypothesis should be added to the indicated results.

\begin{nh}[{\cite[Theorems 1.13, 8.1, and D.13, Corollaries 1.14 and 8.2, and Propositions 8.3 and D.14]{htnso2}}]
	The operad $\mathcal O$ is cofibrant in $\operad{\C V}$ or admits a cofibration from the associative operad $\mathtt{Ass}^{\C V}\into\mathcal O$ or from the unital associative operad $\mathtt{uAss}^{\C V}\into\mathcal O$.
\end{nh} 

The most general of these results is \cite[Theorem D.13]{htnso2}, which follows from Theorem \ref{cuandoex}, Propositions \ref{sonex1} and \ref{sonex2}, and Remark \ref{ende} below. The remark is not required for the results which are not in \cite[Appendix D]{htnso2}, provided we also assume that the tensor unit of $\C V$ and $\mathcal O(n)$, $n\geq 0$, are cofibrant. Note that \cite[Lemmas 6.6 and D.1]{htnso2} are not useful any more, since the map \cite[(6-2)]{htnso2} plays no role after the corrections. Finally, \cite[Corollary D.2]{htnso2}, which generalizes \cite[Proposition 9.2 (2)]{htnso}, should be similarly corrected.

\setcounter{section}{4}
\setcounter{theorem}{1}
\renewcommand{\thetheorem}{\Alph{section}.\arabic{theorem}}

\begin{corollary}
	Suppose that $\C C$ satisfies the strong unit axiom and that one of the following statements holds:
	\begin{enumerate}
		\item $A$ is a cofibrant $\mathcal O$-algebra and $z(\mathcal O(n))$ is pseudo-cofibrant in $\C C$ for $n\geq 0$.
		\item $\mathcal O$ is a cofibrant operad or there is a cofibration $\mathtt{Ass}^{\C V}\into \mathcal O$ or $\mathtt{uAss}^{\C V}\into \mathcal O$ in $\operad{\C V}$, and the $\mathcal O$-algebra $A$ in~$\C C$ has underlying pseudo-cofibrant object.
	\end{enumerate}
	Then any cofibration $\phi\colon A\into B$ in $\algebra{\mathcal O}{\C C}$ is also a  cofibration in $\C C$.
\end{corollary}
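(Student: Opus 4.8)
The plan is to reduce the statement to the single push-out case recorded above, exploiting that cofibrations between $\mathcal O$-algebras are assembled from free maps. Recall that, whenever $\algebra{\mathcal O}{\C C}$ admits the transferred model structure, this structure is cofibrantly generated with set of generating cofibrations $\{\mathcal F_{\mathcal O}(i)\}$, $i$ ranging over the generating cofibrations $I$ of $\C C$. Hence, by the small object argument together with the retract argument, the given cofibration $\phi\colon A\into B$ is a retract, under $A$, of a relative $\mathcal F_{\mathcal O}(I)$-cell complex $j\colon A\r B'$. Forgetting to $\mor{\C C}$, $\phi$ becomes a retract of the underlying morphism of $j$; since cofibrations in $\C C$ are closed under retracts, it suffices to prove that the underlying morphism of $j$ is a cofibration in $\C C$.

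Write $j$ as a transfinite composition $A=A_{0}\r A_{1}\r\cdots\r A_{\lambda}\r\cdots$, with $A_{\mu}=\colim_{\lambda<\mu}A_{\lambda}$ at limit ordinals, in which each successor step $A_{\lambda}\r A_{\lambda+1}$ is a push-out in $\algebra{\mathcal O}{\C C}$ of a map $\mathcal F_{\mathcal O}(i)$, $i\colon Y\into Z$ a cofibration in $\C C$, along some $g\colon\mathcal F_{\mathcal O}(Y)\r A_{\lambda}$. Cofibrations in $\C C$ are closed under transfinite composition, so it is enough to show that every underlying map $A_{\lambda}\r A_{\lambda+1}$ is a cofibration in $\C C$. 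For this I will invoke the single push-out statement at the level of $\mathcal O$-algebras --- the pseudo-cofibrant, $\C C$-theoretic analogue of Proposition \ref{dura} (2), provided by (the arity $0$ part of) Theorem \ref{cuandoex} together with Propositions \ref{sonex1} and \ref{sonex2}, and Remark \ref{ende} under hypothesis (2). To apply it I must know that $A_{\lambda}$ satisfies the hypothesis of the corollary in place of $A$, which I establish by induction on $\lambda$.

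Under hypothesis (1) the induction is formal: since $A$ is a cofibrant $\mathcal O$-algebra and $A\r A_{\lambda}$ is built from free cells, it is a cofibration in $\algebra{\mathcal O}{\C C}$, so $A_{\lambda}$ is again a cofibrant $\mathcal O$-algebra, while the condition on the $z(\mathcal O(n))$ is independent of the algebra. Under hypothesis (2) the condition on $\mathcal O$ is likewise independent of the algebra, and the substantive point is that the underlying object of $A_{\lambda}$ stays pseudo-cofibrant. This I intend to deduce from the strong unit axiom together with the explicit cell structure of the push-out steps given by the corrected Lemma \ref{pind2}: each $A_{\lambda}\r A_{\lambda+1}$ is obtained by attaching the maps $\mathcal O_{A_{\lambda}}(t)(i,\dots,i)$, $t\geq 1$, and the hypothesis on $\mathcal O$ guarantees --- this being exactly what is extracted in Propositions \ref{sonex1}, \ref{sonex2} and Remark \ref{ende} --- that the enveloping functor-operad $\mathcal O_{A_{\lambda}}$ is controllable enough that such attachments, and the transfinite colimits at limit stages, preserve pseudo-cofibrancy of the underlying object. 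Once the induction is closed, $j$ has underlying cofibration and the proof is complete.

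The hard part is precisely this propagation of pseudo-cofibrancy under hypothesis (2). In contrast with cofibrancy of algebras, which passes automatically through push-outs along free maps, pseudo-cofibrancy of the underlying object is not formal and must be read off from the cell structure of the push-out in Lemma \ref{pind2}. The replacement of \cite[(15)]{htnso} by $\mathcal O_{A}(t)(f,\dots,f)$, whose homotopical behaviour is worse, is what makes this delicate, and is also the reason the New Hypothesis on $\mathcal O$ (cofibrant operad, or a cofibration from $\mathtt{Ass}^{\C V}$ or $\mathtt{uAss}^{\C V}$) has to be imposed: it forces the enveloping functor-operad to be built out of homotopically well-behaved pieces, so that the attaching maps $\mathcal O_{A_{\lambda}}(t)(i,\dots,i)$ behave, for the purpose of preserving cofibrations, like the iterated push-out products that did the job in \cite{htnso}.
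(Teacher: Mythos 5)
Your overall strategy---pass to a relative cell complex by the retract argument, reduce to a single push-out along a free map $\mathcal F_{\mathcal O}(i)$, and control that step via the corrected cell structure of Lemma \ref{pind2} together with the good behaviour of the enveloping functor-operad forced by the hypotheses on $\mathcal O$---is exactly the route the paper takes: it is the proof of Proposition \ref{dura}~(2) transported to the pseudo-cofibrant setting by Remark \ref{ende}. Two points need repair, however. First, the result you invoke for the single push-out is mis-identified: Theorem \ref{cuandoex} is the left-properness statement (a push-out of a weak equivalence along an algebra cofibration is again a weak equivalence) and says nothing about the underlying map of an algebra cofibration being a cofibration in $\C C$; cited literally, it does not close your induction. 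The statements you actually need are the arity~$0$ part of Proposition \ref{mensgen} under hypothesis~(1), and Proposition \ref{marrown} (fed by Propositions \ref{sonex1} and \ref{sonex2}) under hypothesis~(2), each in the pseudo-cofibrant form supplied by Remark \ref{ende}; note that these already contain the transfinite induction you are re-running, so they can be cited directly for the whole cofibration rather than cell by cell. Second, when you attach a cell $\mathcal F_{\mathcal O}(i)$ the cofibration $i\colon Y\into Z$ need not have pseudo-cofibrant source, whereas the (very strong pseudo-)cofibrancy of $\mathcal O_{A_\lambda}$ only makes $\mathcal O_{A_\lambda}(t)(i,\dots,i)$ a cofibration when $i$ is a cofibration between pseudo-cofibrant objects. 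You need the replacement trick from the proof of Proposition \ref{mensgen}: push $i$ forward along $\bar g\colon Y\r A_\lambda$ to the cofibration $A_\lambda\into A_\lambda\cup_Y Z$, whose source is pseudo-cofibrant by your induction hypothesis and which induces the same push-out of algebras. With these two corrections your argument coincides with the paper's.
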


The proof follows the same steps as the proof of Proposition 9.2 (2) above, considering also Remark \ref{ende}, and \cite[Corollary D.3]{htnso2} is an immediate consequence. There are no more affected results in \cite{htnso2}.

\setcounter{section}{1}
\setcounter{theorem}{2}
\renewcommand{\thetheorem}{\arabic{section}.\arabic{theorem}}

\begin{exm}\label{counter2}
We here give a counterexample to the original \cite[Theorem 1.13]{htnso2} for $\C V$ the category of complexes over a field $\Bbbk$. It can also be used to disprove the other results where the new hypothesis is required. Let us place ourselves in the context of Example \ref{counter1}. Let $B=\Bbbk\cdot x$ be the $\mathcal O$-algebra concentrated in degree $0$ with $x^{2}=0$, and $A$ the $\mathcal O$-algebra concentrated in degrees $1$ and $0$
$$\cdots\r 0\r \Bbbk\cdot z\To \Bbbk\cdot y\oplus \Bbbk\cdot y^{2}\r0\r\cdots$$
with $yz=zy=z^{2}=0$ and $d(z)=y^{2}$. There is an obvious weak equivalence of $\mathcal O$-algebras $\varphi \colon A\st{\sim}\r B$ defined by $\varphi(y)=x$. If $\psi\colon A\rightarrowtail A\amalg \mathcal F_{\mathcal O}(\Bbbk)=C=\bigoplus_{n\geq 0}\mathcal O_{A}(n)$ is the inclusion of the first factor of the coproduct then so is $\psi'\colon B\rightarrowtail B\amalg \mathcal F_{\mathcal O}(\Bbbk)=B\cup_AC=\bigoplus_{n\geq 0}\mathcal O_{B}(n)$. However, $\varphi'\colon C\r B\cup_AC$ cannot be a weak equivalence since $\mathcal O_{B}$ is concentrated in degree $0$ but $\mathcal O_{A}(1)=\Bbbk\oplus A/A^{2}\oplus A/A^{2}$ and $H_{1}(A/A^{2})= \Bbbk\cdot z$.
\end{exm}

\setcounter{section}{1}

\section{Enveloping functor-operads}\label{2po}\label{s2}

Given arbitrary categories $\C M$ and $\C N$, we will work with the big categories $\C M^{\C N^n}$ of functors $\C N^n\r\C M$ and natural transformations between them, $n\geq 0$. All invoked categorical notions in $\C M^{\C N^n}$ will tacitly have the pointwise meaning. In this way, we will not incur in any contradiction derived from the fact that morphism classes in $\C M^{\C N^n}$ may not be sets. We will also work with big categories of sequences of functors $\C M^{\C N^{(\mathbb N)}}=\prod_{n\geq 0}\C M^{\C N^n}$. 

A functor $F\colon \C C^{n}\r \C C$, $n\geq 0$, will be denoted by a corolla with $n$ leaves with inner vertex labeled with $F$,
\begin{center}
	\begin{tikzpicture}[level distance=5mm, sibling distance=5mm] 
	\tikzset{every node/.style={ execute at begin node=$\scriptstyle ,%
			execute at end node=$, }}
	\node {} [grow'=up] 
	child { [fill] circle (2pt)
		child {{} node [above] (A) {}}
		child {node {\displaystyle\cdots}  edge from parent [draw=none] {}}
		child {{} node [above] (O) {}}
		node [below right] {F}
	};
	\draw [decoration={brace}, decorate] ($(A)+(-.08,0)$) -- node [above] {\text{\scriptsize $n$ leaves}} ($(O)+(.08,0)$) ;
	\end{tikzpicture}
\end{center}
The evaluation $F(X_1,\dots, X_n)$ of this functor at $n$ objects $X_1,\dots, X_n$ in $\C C$ will be designated by labeling the leaves with these objects 
\begin{center}
	\begin{tikzpicture}[level distance=5mm, sibling distance=5mm] 
	\tikzset{every node/.style={ execute at begin node=$\scriptstyle ,%
			execute at end node=$, }}
	\node {} [grow'=up] 
	child { [fill] circle (2pt)
		child {{} node [above] {X_1}}
		child {node {\displaystyle\cdots}  edge from parent [draw=none] {}}
		child {{} node [above] {X_n}}
		node [below right] {F}};
	\end{tikzpicture}
\end{center}
If $G\colon \C C^{m}\r \C C$, $m\geq 0$, is another functor and $1\leq i\leq n$,  composition at the $i^{\text{th}}$ slot $F\circ_iG=F(\st{i-1}{\cdots\cdots},G,\st{n-i}{\cdots\cdots})\colon\C C^{n+m-1}\r \C C$ will be denoted by tree grafting
\begin{center}
	\begin{tikzpicture}[level distance=7mm, sibling distance=5mm] 
	\node {} [grow'=up]
	child {node [fill,circle, inner sep=0,outer sep =0, minimum size = 1.5mm] (L) {} 
		child{coordinate (A0)}
		child{node {$\cdots$} edge from parent[draw=none]}
		child{coordinate (M) node [fill,circle, inner sep=0,outer sep =0, minimum size = 1.5mm] {} 
			child{coordinate (A)}
			child{node {$\cdots$} edge from parent[draw=none]}
			child{coordinate (O)}
			node [below left] {$\scriptstyle G\!\!$}}
		child{node {$\cdots$} edge from parent[draw=none]}
		child{coordinate (O0)}
		node [below left] {$\scriptstyle F$}};   
	\draw [decoration={brace}, decorate] ($(A)+(-.08,.15)$) -- node [above] {\scriptsize $m$} ($(O)+(.08,.15)$) ;
	\draw [decoration={brace}, decorate] ($(A0)+(-.08,.15)$) -- node [above] {\scriptsize $i-1$} ($(M)+(-.35,.15)$) ;
	\draw [decoration={brace}, decorate] ($(M)+(.3,.15)$) -- node [above] {\scriptsize $n-i$} ($(O0)+(.08,.15)$) ;
	\end{tikzpicture}
\end{center}
In this way, a \emph{tree} (planted, planar and with leaves) in the sense of \cite[\S3]{htnso}, where each vertex $v$ is labeled with a functor $\C C^{\widetilde{v}}\r \C C$ ($\widetilde{v}$ is the \emph{arity} of $v$), denotes a functor $\C C^n\r \C C$, where $n$ is the number of leaves, and evaluation of this functor is designated by labeling the leaves with objects in $\C C$.

We say that a set of inner vertices in a tree is labeled with an object $F=\{F(n)\}_{n\geq 0}$ in $\C C^{\C C^{(\mathbb N)}}$ if each vertex $v$ in the set is labeled with $F(\widetilde{v})$. A sequence $\{V(n)\}_{n\geq 0}$ of objects in $\C V$ is regarded as the sequence of functors $(X_1,\dots, X_n)\mapsto z(V(n))\otimes\bigotimes_{i=1}^nX_i$. In this way, we can regard any operad in $\C V$ as a functor-operad.

Some labeled trees below have leaves decorated with shapes. This does not change the functorial meaning of labeled trees, it is only used to define appropriate indexing sets and to indicate the labels.

Let $\mathcal O$ be an operad in $\C V$ and $A$ an $\mathcal O$-algebra in $\C C$. Let $\mathcal O_A^0$ be the sequence of functors such that $\mathcal O_{A}^0(n)$ is the coproduct of all corollas equipped with $n$ distinguished snaky leaves, inner vertex labeled with $\mathcal O$, and straight leaves with $A$, e.g.~($n=2$)
\begin{center}
	\begin{tikzpicture}[level distance=5mm, sibling distance=3mm] 
	\node {} [grow'=up]
	child {[fill] circle (2pt) 
		child {{} node [above] {$\scriptstyle A$}}
		child {edge from parent [decorate,decoration={snake,amplitude=.4mm,segment length=2mm,post length=.5mm,pre length=1.5mm}] {}}
		child {{} node [above] {$\scriptstyle A$}}
		child {{} node [above] {$\scriptstyle A$}}
		child {edge from parent [decorate,decoration={snake,amplitude=.4mm,segment length=2mm,post length=.5mm,pre length=1.5mm}] {}}
		node [below left] {$\scriptstyle \mathcal O(5)$}
	};
	\end{tikzpicture}
\end{center}
The map \cite[(15)]{htnso} is $\mathcal O_A^0(t)(f,\dots,f)$. We must replace it with $\mathcal O_A(t)(f,\dots,f)$, where $\mathcal O_A$ is the reflexive coequalizer in $\C C^{\C C^{(\mathbb N)}}$ of a diagram
\begin{equation}\label{rcoeq}
\xymatrix{\mathcal O_A^1 \ar@<.5ex>[r] \ar@<-.5ex>[r] &\mathcal O_A^0\ar@/_10pt/[l]}.
\end{equation}
Here, $\mathcal O_{A}^1(n)$ is the coproduct of trees of height $\leq 3$, $n$ leaves in level $2$, all of them snaky, and such that all level $3$ edges (if any) are straight leaves (the level of an edge is the level of the top vertex). Labels are as above, e.g.~$(n=2)$
$$
\begin{tikzpicture}[level distance=5mm, sibling distance=5mm] 
\tikzstyle{level 3}=[sibling distance=3mm] 
\node {} [grow'=up]
child {[fill] circle (2pt) 
	child {[fill] circle (2pt) 
		child {{} node [above] (A4) {$\scriptstyle A$}}
		child {{} node [above] (A5) {$\scriptstyle A$}}
		child {{} node [above] (A6) {$\scriptstyle A$}}
		node [below left] (O3) {$\scriptstyle \mathcal O(3)$}
	}
	child {edge from parent [decorate,decoration={snake,amplitude=.4mm,segment length=2mm,post length=.5mm,pre length=1.5mm}] {}}
	child {[fill] circle (2pt) node [above] (O0) {$\scriptstyle \mathcal O(0)$}}
	child {edge from parent [decorate,decoration={snake,amplitude=.4mm,segment length=2mm,post length=.5mm,pre length=1.5mm}] {}}
	node [below left] (O5) {$\scriptstyle \mathcal O(4)$}
};   
\end{tikzpicture}
$$
The arrows in \eqref{rcoeq} are defined in terms of the following three basic operations with labeled trees, $n,q\geq 0$, $1\leq i\leq p$,
\begin{equation}\label{treeops}
\begin{array}{c}
\begin{tikzpicture}[level distance=7mm, sibling distance=5.7mm] 
\draw (-4,0) node {} [grow'=up]
child {node [fill,circle, inner sep=0,outer sep =0, minimum size = 1.5mm] (L) {} 
	child{node [above] (A) {$\scriptstyle A$}}
	child{node [above] {$\cdots$} edge from parent[draw=none]}
	child{node [above] (O) {$\scriptstyle A$}}
	node [below left] {$\scriptstyle \mathcal O(n)$}};   
\draw [decoration={brace}, decorate] ($(A)+(-.08,.15)$) -- node [above] {\scriptsize $n$ leaves} ($(O)+(.08,.15)$) ;
\draw (-2,0) node {} [grow'=up] child{coordinate (R) node [above] {$\scriptstyle A$}};
\path[->] ($(L)+(3mm,0)$) edge  [very thick, opacity=.5] node [above, opacity=1, text width=1.5cm, text centered] {\scriptsize corolla \\[-2mm] contraction} node [below, opacity=1, text width=1.8cm, text centered] {\scriptsize i.e.~$\mathcal O$-algebra\\[-2mm] structure map} ($(R)+(-3mm,0)$);
\draw (0,0) node {} [grow'=up]
child {node [fill,circle, inner sep=0,outer sep =0, minimum size = 1.5mm] (L) {} 
	child{coordinate (A0)}
	child{node {$\cdots$} edge from parent[draw=none]}
	child{coordinate (M) node [fill,circle, inner sep=0,outer sep =0, minimum size = 1.5mm] {} 
		child{coordinate (A)}
		child{node {$\cdots$} edge from parent[draw=none]}
		child{coordinate (O)}
		node [below left] {$\scriptstyle \mathcal O(q)\!\!$}}
	child{node {$\cdots$} edge from parent[draw=none]}
	child{coordinate (O0)}
	node [below left] {$\scriptstyle \mathcal O(p)$}};   
\draw [decoration={brace}, decorate] ($(A)+(-.08,.15)$) -- node [above] {\scriptsize $q$} ($(O)+(.08,.15)$) ;
\draw [decoration={brace}, decorate] ($(A0)+(-.08,.15)$) -- node [above] {\scriptsize $i-1$} ($(M)+(-.35,.15)$) ;
\draw [decoration={brace}, decorate] ($(M)+(.3,.15)$) -- node [above] {\scriptsize $p-i$} ($(O0)+(.08,.15)$) ;
\draw (3,0) node {} [grow'=up]
child{node [fill,circle, inner sep=0,outer sep =0, minimum size = 1.5mm] (R) {} 
	child{coordinate (A)}
	child{node {$\cdots$} edge from parent[draw=none]}
	child{coordinate (O)}
	node [below right] {$\scriptstyle \mathcal O(p+q-1)$}};
\draw [decoration={brace}, decorate] ($(A)+(-.08,.15)$) -- node [above] {\scriptsize $p+q-1$} ($(O)+(.08,.15)$) ;
\path[->] ($(L)+(3mm,0)$) edge  [very thick, opacity=.5] node [above, opacity=1, text width=1.9cm, text centered] {\scriptsize \quad inner edge\\[-2mm] \quad contraction} node [below, opacity=1, text width=2cm, text centered] {\scriptsize i.e.~operad\\[-2mm] composition $\circ_i$} ($(R)+(-3mm,0)$);
\draw (5.2,0) node {} [grow'=up] child{coordinate (LL) child{}};
\draw (7,0) node {} [grow'=up] child{node [fill,circle, inner sep=0,outer sep =0, minimum size = 1.5mm] (RR) {} child{} node [right] {$\scriptstyle \mathcal O(1)$}};
\path[->] ($(LL)+(2mm,0)$) edge  [very thick, opacity=.5] node [above, opacity=1, text width=1.9cm, text centered] {\scriptsize edge\\[-2mm] subdivision} node [below, opacity=1, text width=1.8cm, text centered] {\scriptsize i.e.~unit\\[-2mm] $\unit\r\mathcal O(1)$} ($(RR)+(-2mm,0)$);
\end{tikzpicture}
\end{array}
\end{equation}
Inner egde contraction and edge subdivision also make sense for functor-operads. 
The two parallel arrows in \eqref{rcoeq} are given by corolla and inner edge contraction, respectively, e.g. 
$$
\begin{tikzpicture}[level distance=5mm, sibling distance=5mm] 
\tikzstyle{level 2}=[sibling distance=4mm] 
\tikzstyle{level 3}=[sibling distance=5mm] 
\node {} [grow'=up]
child {[fill] circle (2pt) 
	child {{} node [above] (A1) {$\scriptstyle A$}}
	child {edge from parent [decorate,decoration={snake,amplitude=.4mm,segment length=2mm,post length=.5mm,pre length=1.5mm}] {}}
	child {{} node [above] (A2) {$\scriptstyle A$}}
	child {edge from parent [decorate,decoration={snake,amplitude=.4mm,segment length=2mm,post length=.5mm,pre length=1.5mm}] {}}
	node [below left] {$\scriptstyle \mathcal O(4)$}
};  
\tikzstyle{level 3}=[sibling distance=3mm] 
\tikzstyle{level 2}=[sibling distance=5mm] 
\draw (3cm,0) node {} [grow'=up]
child {[fill] circle (2pt) 
	child {[fill] circle (2pt) 
		child {{} node [above] (A4) {$\scriptstyle A$}}
		child {{} node [above] (A5) {$\scriptstyle A$}}
		child {{} node [above] (A6) {$\scriptstyle A$}}
		node [below] (O3) {$\scriptstyle \mathcal O(3)$}
	}
	child {edge from parent [decorate,decoration={snake,amplitude=.4mm,segment length=2mm,post length=.5mm,pre length=1.5mm}] {}}
	child {[fill] circle (2pt) node [above] (O0) {$\scriptstyle \mathcal O(0)$}}
	child {edge from parent [decorate,decoration={snake,amplitude=.4mm,segment length=2mm,post length=.5mm,pre length=1.5mm}] {}}
	node [below right] (O5) {$\scriptstyle \mathcal O(4)$}
};   
\node[ellipse, fit= (A4) (A5) (A6) (O3), draw, opacity=.5, very thick, inner sep=-4] (E1) {};
\path[opacity=.5, very thick, bend right = 20] (E1.north west) edge [->] (A1.north);
\node[circle, fit=(O0), draw, opacity=.5, very thick, inner sep=-2] (E2) {};
\path[->] (E2.north) edge [opacity=.5, very thick, bend right = 40] (A2.north);
\tikzstyle{level 2}=[sibling distance=3mm] 
\draw (6.2cm,0) node {} [grow'=up]
child {[fill] circle (2pt) 
	child {{} node [above] {$\scriptstyle A$}}
	child {{} node [above] {$\scriptstyle A$}}
	child {{} node [above] {$\scriptstyle A$}}
	child {edge from parent [decorate,decoration={snake,amplitude=.4mm,segment length=2mm,post length=.5mm,pre length=1.5mm}] {}}
	child {edge from parent [decorate,decoration={snake,amplitude=.4mm,segment length=2mm,post length=.5mm,pre length=1.5mm}] {}}
	node [below left] (O6) {$\scriptstyle \mathcal O(5)$}
};   
\node[circle, fill, fit=(O3), draw, opacity=.15, very thin, inner sep=-4] (OO3) {};
\node[circle, fill, fit=(O0), draw, opacity=.15, very thin, inner sep=-4] (OO0) {};
\node[circle, fill, fit=(O5), draw, opacity=.15, very thin, inner sep=-4] (OO5) {};
\path[-] (OO5) edge [opacity=.15, ultra thick] (OO3);
\path[-] (OO5) edge [opacity=.15, ultra thick] (OO0);
\path[->] (OO5) edge [opacity=.15, ultra thick, bend right =10] (O6);
\end{tikzpicture}
$$
The arrow pointing backwards in \eqref{rcoeq} is given by subdividing straight leaves, e.g. 
$$
\begin{tikzpicture}[level distance=5mm, sibling distance=5mm] 
\tikzstyle{level 2}=[sibling distance=4mm] 
\node {} [grow'=up]
child {[fill] circle (2pt) 
	child {{} node [above] {$\scriptstyle A$}}
	child {edge from parent [decorate,decoration={snake,amplitude=.4mm,segment length=2mm,post length=.5mm,pre length=1.5mm}] {}}
	child {{} node [above] {$\scriptstyle A$}}
	child {edge from parent [decorate,decoration={snake,amplitude=.4mm,segment length=2mm,post length=.5mm,pre length=1.5mm}] {}}
	node [below left] (O51) {$\scriptstyle \mathcal O(4)$}
};  
\tikzstyle{level 2}=[sibling distance=5mm] 
\draw (5cm,0) node {} [grow'=up]
child {[fill] circle (2pt) 
	child {[fill] circle (2pt) child{{} node [above] {$\scriptstyle A$}} node [above left] {$\scriptstyle \mathcal O(1)\!\!$}}
	child {edge from parent [decorate,decoration={snake,amplitude=.4mm,segment length=2mm,post length=.5mm,pre length=1.5mm}] {}}
	child {[fill] circle (2pt) child{{} node [above] {$\scriptstyle A$}} node [above left] {$\scriptstyle\mathcal O(1)\!\!$}}
	child {edge from parent [decorate,decoration={snake,amplitude=.4mm,segment length=2mm,post length=.5mm,pre length=1.5mm}] {}}
	node [below left] (O52) {$\scriptstyle \mathcal O(4)$}
};  
\path[->] ($(O51)+(2,.3)$) edge [very thick, opacity=.5] ($(O52)+(-1.5,.3)$);
\end{tikzpicture}
$$
Note that $\mathcal O_A(0)=A$, since \eqref{rcoeq} in arity $0$ is precisely the final part of the cotriple resolution $\mathcal F_{\mathcal O}^{2}(A)\rightrightarrows \mathcal F_{\mathcal O}(A)$. 

The functor-operad structure on $\mathcal O_{A}$ is as follows. Compositions in $\mathcal O_{A}$ are defined as the (reflexive) coequalizer of
\[\xymatrix{
	\mathcal O^1_A(p)\circ_i\mathcal O^1_A(q)\ar[r]^-{\circ_i^1}\ar@<-.5ex>[d]\ar@<.5ex>[d]&\mathcal O^1_A(p+q-1)\ar@<-.5ex>[d]\ar@<.5ex>[d]\\
	\mathcal O^0_A(p)\circ_i\mathcal O^0_A(q)\ar[r]^-{\circ_i^0}&\mathcal O^0_A(p+q-1)
	}\]
Here, horizontal maps are defined by contracting the inner edge created by grafting, e.g.~$\circ_2^0$ for $p=q=2$ contains
\[
\begin{tikzpicture}[level distance=5mm, sibling distance=4mm] 
  \draw (-4.5cm,0) node {} [grow'=up]
  child {node [fill,circle, inner sep=0,outer sep =0, minimum size = 1.5mm] {} 
  	child {edge from parent [decorate,decoration={snake,amplitude=.4mm,segment length=2mm,post length=.5mm,pre length=1.5mm}] {}}
  	child {{} node [above] {$\scriptstyle A$}}
  	child {edge from parent [decorate,decoration={snake,amplitude=.4mm,segment length=2mm,post length=.5mm,pre length=1.5mm}] {}}
  	node [below left] {$\scriptstyle \mathcal O(3)$} };
   \draw (-3.5cm,.7cm) node {$ \circ_2$} ;
    \tikzstyle{level 2}=[sibling distance=4mm] 
\draw (-2.5cm,0) node {} [grow'=up]
  child {node [fill,circle, inner sep=0,outer sep =0, minimum size = 1.5mm] {} 
  	child {{} node [above] {$\scriptstyle A$}}
  	child {edge from parent [decorate,decoration={snake,amplitude=.4mm,segment length=2mm,post length=.5mm,pre length=1.5mm}] {}}
  	child {{} node [above] {$\scriptstyle A$}}
  	child {edge from parent [decorate,decoration={snake,amplitude=.4mm,segment length=2mm,post length=.5mm,pre length=1.5mm}] {}}
  	node [below right] {$\scriptstyle \mathcal O(4)$} };
     \draw (-1.25cm,.7cm) node {$ =$} ;
  \tikzstyle{level 2}=[sibling distance=5mm] 
  \tikzstyle{level 3}=[sibling distance=3mm] 
\node {} [grow'=up]
child {node [fill,circle, inner sep=0,outer sep =0, minimum size = 1.5mm] {} 
  child {edge from parent [decorate,decoration={snake,amplitude=.4mm,segment length=2mm,post length=.5mm,pre length=1.5mm}] {}}
  child {{} node [above] {$\scriptstyle A$}}
  child {node [fill,circle, inner sep=0,outer sep =0, minimum size = 1.5mm] {} 
  child {{} node [above] {$\scriptstyle A$}}
  child {edge from parent [decorate,decoration={snake,amplitude=.4mm,segment length=2mm,post length=.5mm,pre length=1.5mm}] {}}
  child {{} node [above] {$\scriptstyle A$}}
  child {edge from parent [decorate,decoration={snake,amplitude=.4mm,segment length=2mm,post length=.5mm,pre length=1.5mm}] {}}
node [below right] (O5) {$\scriptstyle \mathcal O(4)$} }
 node [below right] (O4) {$\scriptstyle \mathcal O(3)$}
};  
\node [circle, fill, fit=(O4), draw, opacity=.15, very thin, inner sep=-4] (E1) {};
\node [circle, fill, fit=(O5), draw, opacity=.15, very thin, inner sep=-4] (E2) {};
\path[-] (E1) edge [opacity=.15, ultra thick] (E2);
\tikzstyle{level 2}=[sibling distance=4mm] 
\draw (5cm,0) node {} [grow'=up]
child {node [fill,circle, inner sep=0,outer sep =0, minimum size = 1.5mm] {} 
  child {edge from parent [decorate,decoration={snake,amplitude=.4mm,segment length=2mm,post length=.5mm,pre length=1.5mm}] {}}
  child {{} node [above] {$\scriptstyle A$}}
  child {{} node [above] {$\scriptstyle A$}}
  child {edge from parent [decorate,decoration={snake,amplitude=.4mm,segment length=2mm,post length=.5mm,pre length=1.5mm}] {}}
  child {{} node [above] {$\scriptstyle A$}}
  child {edge from parent [decorate,decoration={snake,amplitude=.4mm,segment length=2mm,post length=.5mm,pre length=1.5mm}] {}}
node [below left] (O8) {$\scriptstyle \mathcal O(6)$}
};
\path[->] (E1) edge [opacity=.15, ultra thick, bend right =10] node [above, opacity=1] {$\scriptstyle \circ_3$} (O8) ;
\end{tikzpicture}
\]
The unit is the composite $\id{\C C}\r\mathcal O_A^0(1)\r\mathcal O_A(1)$, where the first arrow is given by edge subdivision
$$
\begin{tikzpicture}[level distance=5mm, sibling distance=7mm] 
\tikzstyle{level 1}=[level distance=10mm] 
\tikzstyle{level 2}=[level distance=10mm] 
\draw (-.7,.5) node {$\id{\C C}=$};
\node {} [grow'=up] child{edge from parent [decorate,decoration={snake,amplitude=.4mm,segment length=2mm,post length=.5mm,pre length=1.5mm}] {}};
\tikzstyle{level 1}=[level distance=5mm] 
\tikzstyle{level 2}=[level distance=5mm] 
\draw (4,0) node {} [grow'=up] child{node [fill,circle, inner sep=0,outer sep =0, minimum size = 1.5mm] (RR) {} child{edge from parent [decorate,decoration={snake,amplitude=.4mm,segment length=2mm,post length=.5mm}] {}} node [right] {$\scriptstyle \mathcal O(1)$}};
\path[->] (2mm,5mm) edge  [very thick, opacity=.5] ($(RR)+(-2mm,0)$);
\end{tikzpicture}$$
and inclusion, an the second arrow is the natural projection.

The inclusion of corollas with no straight leaves induces a map $\mathcal O\r\mathcal O_A^0$ such that the composite $\mathcal O\r\mathcal O_A^0\r\mathcal O_A$ is a natural morphism of functor-operads.

We now compute some enveloping functor-operads. Recall that a \emph{split coequalizer} in a category is a diagram
$$\xymatrix{U\ar@<1ex>[r]^{f}\ar[r]|{g}&V\ar@<1ex>[l]^{t}\ar@<.5ex>[r]^{e}&W\ar@<.5ex>[l]^{s}}$$
such that $ef=eg$, $es=\id{W}$, $ft=\id{V}$, and $se=gt$. The arrows pointing $\r$ are a coequalizer in the usual sense.

\begin{proposition}\label{envinitial}
	For any operad $\mathcal O$ in $\C V$, if $A=z(\mathcal O(0))$ is the initial $\mathcal O$-algebra in $\C C$ then $\mathcal O_A(n)(X_1,\dots, X_n)=z(\mathcal O(n))\otimes \bigotimes_{i=1}^nX_i$, $n\geq 0$. 
\end{proposition}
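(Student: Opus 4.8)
The plan is to exhibit $z(\mathcal O(n))\otimes\bigotimes_{i=1}^{n}X_{i}$ as a \emph{split} coequalizer of the two parallel arrows of \eqref{rcoeq} evaluated at $(X_{1},\dots,X_{n})$, and then to conclude by recalling that coequalizers in $\C{C}^{\C{C}^{(\mathbb N)}}$ are computed pointwise and that $\mathcal O_{A}(n)$ is by definition the coequalizer of $\mathcal O_{A}^{1}(n)\rightrightarrows\mathcal O_{A}^{0}(n)$. A preliminary reduction simplifies matters: since $A=z(\mathcal O(0))$ and, in every labelled tree occurring in $\mathcal O_{A}^{0}(n)$ or $\mathcal O_{A}^{1}(n)$, every label other than the $X_{i}$ on the snaky leaves lies in the image of the strong monoidal functor $z$ (the vertex labels $\mathcal O(\cdot)$, and the straight-leaf labels $A=z(\mathcal O(0))$), the centrality isomorphisms of $z$ let us collect all these factors to the left. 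Thus each summand of $\mathcal O_{A}^{0}(n)(X_{\bullet})$ and of $\mathcal O_{A}^{1}(n)(X_{\bullet})$ is naturally isomorphic to $z(V)\otimes X_{1}\otimes\cdots\otimes X_{n}$ for a suitable object $V$ of $\C{V}$ built out of components of $\mathcal O$, and all the structure maps in sight are of the form $z(\text{-})\otimes\mathrm{id}_{X_{1}\otimes\cdots\otimes X_{n}}$. So the whole question takes place in the symmetric monoidal category $\C{V}$.

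Concretely, $\mathcal O_{A}^{0}(n)(X_{\bullet})$ is then the coproduct, over $p\geq n$ and over the subset $S\subseteq\{1,\dots,p\}$ of size $n$ recording the snaky slots, of summands $z\bigl(\mathcal O(p)\otimes\mathcal O(0)^{\otimes(p-n)}\bigr)\otimes X_{1}\otimes\cdots\otimes X_{n}$, while $\mathcal O_{A}^{1}(n)(X_{\bullet})$ is the coproduct over the same data together with arities $q_{j}\geq0$ for the level-$2$ branches, of summands $z\bigl(\mathcal O(p)\otimes\bigotimes_{j}\mathcal O(q_{j})\otimes\mathcal O(0)^{\otimes\sum_{j}q_{j}}\bigr)\otimes X_{1}\otimes\cdots\otimes X_{n}$. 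In these terms, corolla contraction $f$ is $z$ of the map that replaces each $\mathcal O(q_{j})\otimes\mathcal O(0)^{\otimes q_{j}}$ by $\mathcal O(0)$ through the iterated operad composition $\mathcal O(q_{j})\otimes\mathcal O(0)^{\otimes q_{j}}\to\mathcal O(0)$ (which is exactly the arity-$q_{j}$ structure map of the initial $\mathcal O$-algebra $A=z(\mathcal O(0))$), and inner edge contraction $g$ is $z$ of the map composing the $\mathcal O(q_{j})$ into the corresponding slots of $\mathcal O(p)$, producing $\mathcal O(p')$ with $p'=n+\sum_{j}q_{j}$.

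Now I would introduce the splitting data. Let $e\colon\mathcal O_{A}^{0}(n)(X_{\bullet})\to z(\mathcal O(n))\otimes X_{1}\otimes\cdots\otimes X_{n}$ be, on each summand, $z$ of the operad composition plugging the $p-n$ copies of $\mathcal O(0)$ into the non-snaky slots of $\mathcal O(p)$; this is well defined (independent of the order of plugging) by the associativity axiom of $\mathcal O$. Let $s$ be the inclusion of the $p=n$ summand, and let $t\colon\mathcal O_{A}^{0}(n)(X_{\bullet})\to\mathcal O_{A}^{1}(n)(X_{\bullet})$ be the identity of each summand onto the corresponding summand with all $q_{j}=0$ (i.e.\ subdivide each straight leaf by an $\mathcal O(0)$-vertex carrying no leaves, a legitimate if degenerate summand of $\mathcal O_{A}^{1}$, as in the example after \eqref{rcoeq}). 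Then $es=\mathrm{id}$ is immediate, $ft=\mathrm{id}$ because the arity-$0$ structure map $z(\mathcal O(0))\to A$ of the initial algebra is the identity, and $se=gt$ because both composites plug the $p-n$ copies of $\mathcal O(0)$ into $\mathcal O(p)$ and land in the $p=n$ summand.

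The remaining condition $ef=eg$ is the only real obstacle. On a summand of $\mathcal O_{A}^{1}(n)(X_{\bullet})$ as above, both composites are $z$ of the total operad composition along the two-level tree with $\mathcal O(p)$ at the root, the $\mathcal O(q_{j})$ grafted at its $p-n$ non-snaky inputs, and one copy of $\mathcal O(0)$ grafted at each of the $\sum_{j}q_{j}$ inputs of the $\mathcal O(q_{j})$, tensored with $\mathrm{id}_{X_{1}\otimes\cdots\otimes X_{n}}$: $ef$ contracts the top layer and then the middle one, $eg$ contracts the middle layer and then the resulting top leaves, and the two agree by the associativity and unit axioms of $\mathcal O$ --- equivalently, because contracting the inner edges of a labelled tree is independent of the order. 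The bookkeeping of the interleaving of snaky slots and of the centrality reassociations is what makes this step technical, but no new idea is involved. Granting it, $(f,g,e,s,t)$ is a split coequalizer, so $e$ is a coequalizer of $f,g$ in $\C{C}$; since $\mathcal O_{A}(n)$ is the pointwise coequalizer of $\mathcal O_{A}^{1}(n)\rightrightarrows\mathcal O_{A}^{0}(n)$, we obtain $\mathcal O_{A}(n)(X_{1},\dots,X_{n})=z(\mathcal O(n))\otimes\bigotimes_{i=1}^{n}X_{i}$. Finally one checks that this identification carries the functor-operad structure of $\mathcal O_{A}$ to the canonical one on $\{z(\mathcal O(n))\otimes\bigotimes_{i}X_{i}\}_{n}$, so that $\mathcal O_{A}\cong\mathcal O$ as functor-operads, though this refinement is not needed for the statement as given.
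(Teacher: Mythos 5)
Your proof is correct and follows essentially the same route as the paper: the paper also corks the straight leaves using $A=z(\mathcal O(0))$, takes the projection given by (iterated) inner edge contraction, and observes that the resulting diagram is a split coequalizer whose two backward maps are inclusions of coproduct factors --- exactly your $e$, $s$, and $t$ (the all-$q_j=0$ section rather than the reflexive one). Your centrality reduction to $\C V$ and the explicit verification of the split-coequalizer identities are just a more detailed writing-out of the same argument.
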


\begin{proof}
	Since $A=z(\mathcal O(0))$ we can cork the straight leaves (a \emph{cork} is an arity $0$ inner vertex). The natural projection is then defined by inner edge contraction, e.g.~$(n=2)$
	$$
	\begin{array}{c}
	\begin{tikzpicture}[level distance=5mm, sibling distance=5mm] 
	\node {} [grow'=up]
	child {[fill] circle (2pt) 
		child {{} node [above] (A1) {$\scriptstyle A$}}
		child {{} node [above] {$\scriptstyle X_1$} edge from parent [decorate,decoration={snake,amplitude=.4mm,segment length=2mm,post length=.5mm,pre length=1.5mm}] {}}
		child {{} node [above] (A2) {$\scriptstyle A$}}
		child {{} node [above] {$\scriptstyle X_2$} edge from parent [decorate,decoration={snake,amplitude=.4mm,segment length=2mm,post length=.5mm,pre length=1.5mm}] {}}
		node [below left] {$\scriptstyle \mathcal O(4)$}
	};  
	\draw (1.5cm,7mm) node {$=$};
	\draw (3cm,0) node {} [grow'=up]
	child {[fill] circle (2pt) 
		child {[fill] circle (2pt) node [left] (O3) {$\scriptstyle \mathcal O(0)$}}
		child {{} node [above] {$\scriptstyle X_1\quad$} edge from parent [decorate,decoration={snake,amplitude=.4mm,segment length=2mm,post length=.5mm,pre length=1.5mm}] {}}
		child {[fill] circle (2pt) node [above] (O0) {$\scriptstyle \mathcal O(0)$}}
		child {{} node [above] {$\scriptstyle X_2$} edge from parent [decorate,decoration={snake,amplitude=.4mm,segment length=2mm,post length=.5mm,pre length=1.5mm}] {}}
		node [below left] (O5) {$\scriptstyle \mathcal O(4)$}
	};   
	\draw (7.5cm,0) node {} [grow'=up]
	child {[fill] circle (2pt) 
		child {{} node [above] {$\scriptstyle X_1$} edge from parent [decorate,decoration={snake,amplitude=.4mm,segment length=2mm,post length=.5mm,pre length=1.5mm}] {}}
		child {{} node [above] {$\scriptstyle X_2$} edge from parent [decorate,decoration={snake,amplitude=.4mm,segment length=2mm,post length=.5mm,pre length=1.5mm}] {}}
		node [below left] (O6) {$\scriptstyle \mathcal O(2)$}
	};   
	\node[circle, fill, fit=(O3), draw, opacity=.15, very thin, inner sep=-4] (OO3) {};
	\node[circle, fill, fit=(O0), draw, opacity=.15, very thin, inner sep=-4] (OO0) {};
	\node[circle, fill, fit=(O5), draw, opacity=.15, very thin, inner sep=-4] (OO5) {};
	\path[-] (OO5) edge [opacity=.15, ultra thick] (OO3);
	\path[-] (OO5) edge [opacity=.15, ultra thick] (OO0);
	\path[->] (OO5) edge [opacity=.15, ultra thick, bend right =10] (O6);
	\end{tikzpicture}
	\end{array}
	$$
	This is a split coequalizer. The two morphisms going backwards are plain inclusions of coproduct factors.
\end{proof}

\begin{proposition}\label{uass}
	For $A$ a unital associative algebra in $\C C$, $\mathtt{uAss}^{\C V}_{A}(n)(X_{1},\dots,X_{n})= A\otimes\bigotimes_{i=1}^n(X_i\otimes A)$, $n\geq 0$. 
\end{proposition}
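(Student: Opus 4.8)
The plan is to specialise the reflexive coequalizer \eqref{rcoeq} to $\mathcal O=\mathtt{uAss}^{\C V}$ and to realise it, for each arity $n$ and each tuple $X_\bullet=(X_1,\dots,X_n)$ of objects of $\C C$, as a split coequalizer, exactly in the spirit of Proposition \ref{envinitial}. First I would make the two terms explicit. Since $\mathtt{uAss}^{\C V}(m)$ is the tensor unit of $\C V$ for every $m\geq 0$, a corolla labelled $\mathtt{uAss}^{\C V}$ evaluates, up to the coherence isomorphisms of $\C C$, to the tensor product of its leaf labels read in planar order. Hence
\[
\mathcal O_A^0(n)(X_\bullet)\;=\;\coprod_{a_0,\dots,a_n\geq 0}A^{\otimes a_0}\otimes X_1\otimes A^{\otimes a_1}\otimes X_2\otimes\cdots\otimes X_n\otimes A^{\otimes a_n},
\]
the summand indexed by $(a_0,\dots,a_n)$ being the corolla carrying $a_j$ straight $A$-leaves in the $j$-th gap between consecutive snaky leaves; and $\mathcal O_A^1(n)(X_\bullet)$ is the analogous coproduct of ``corollas of corollas'', in which the straight leaves in each gap are distributed among one or more sub-corollas (a cork standing for an empty sub-corolla, and a gap possibly being empty altogether). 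Everything here is natural in $X_\bullet$, and since colimits in $\C C^{\C C^{(\mathbb N)}}$ are computed pointwise it suffices to treat the coequalizer in $\C C$ for each fixed $X_\bullet$.

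Next I would write down the splitting, in the notation of the split coequalizer recalled just before the proposition, with $f$ the inner edge contraction and $g$ the corolla contraction of \eqref{rcoeq}. The augmentation $e\colon\mathcal O_A^0(n)(X_\bullet)\To A\otimes X_1\otimes A\otimes\cdots\otimes X_n\otimes A$ is, on the $(a_0,\dots,a_n)$-summand, the tensor product of the iterated multiplications $A^{\otimes a_j}\To A$ of the monoid $A$, with the unit of $A$ inserted whenever $a_j=0$; equivalently, $e$ is assembled from the $\mathtt{uAss}^{\C V}$-algebra structure maps of $A$. The section $s$ is the inclusion of the $(1,\dots,1)$-summand, and the section $t\colon\mathcal O_A^0(n)(X_\bullet)\To\mathcal O_A^1(n)(X_\bullet)$ brackets the block of $a_j$ straight leaves in the $j$-th gap into a single sub-corolla $\mathcal O(a_j)(A,\dots,A)$, a cork when $a_j=0$. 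One then checks at once that $ft$ is the identity (flattening the brackets undoes $t$), that $gt=se$ (contracting each bracketed sub-corolla performs precisely the corresponding iterated multiplication and lands in the $(1,\dots,1)$-summand), and that $es$ is the identity (collapsing a length-one block does nothing).

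The only identity with genuine content is $ef=eg$, which must be verified on all of $\mathcal O_A^1(n)(X_\bullet)$, not merely on the image of $t$. A general element there is a corolla of corollas; $f$ flattens it into a single corolla whose block of $A$'s in the $j$-th gap is then collapsed by $e$ through one iterated product, whereas $g$ first evaluates each sub-corolla by an iterated product and $e$ afterwards collapses the $A$'s that remain in each gap. The two composites agree because iterated products in $A$ associate and its unit is neutral, i.e.\ by the unital associative algebra axioms; corks and empty gaps are absorbed by unitality. I expect this to be the step requiring the most care, essentially the bookkeeping of the planar positions of the various $A$-blocks --- there is no symmetry of $\C C$ to invoke --- but each block stays inside its own gap throughout, so the argument runs as in Proposition \ref{envinitial}. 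Once the four identities are in place, \eqref{rcoeq} is a split, hence absolute, coequalizer, and its value is $A\otimes X_1\otimes A\otimes\cdots\otimes X_n\otimes A=A\otimes\bigotimes_{i=1}^n(X_i\otimes A)$; for $n=0$ this recovers $\mathcal O_A(0)=A$. The functor-operad structure is then the one transported along $e$ from the evident structure on $\mathcal O_A^0$, though the statement only asserts the formula for the underlying functors.
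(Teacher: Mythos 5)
Your proof is correct and follows essentially the same route as the paper's: the paper also exhibits \eqref{rcoeq} as a split coequalizer, with the augmentation given by multiplying consecutive copies of $A$ and inserting units into empty gaps, and the two backward maps given by inclusions of factors after bracketing each block of straight leaves into a single sub-corolla (a cork for an empty block). Your explicit verification of the four split-coequalizer identities, in particular $ef=eg$ via associativity and unitality of $A$, merely spells out what the paper leaves implicit.
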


\begin{proof}
	The natural projection onto the coequalizer is defined by multiplying consecutive copies of $A$ and introducing an $A$ between consecutive $X_i$ by using the unit $\unit\r A$ (also before $X_1$ or after $X_n$ if necessary), e.g.~$(n=2)$
	$$
	\begin{tikzpicture}[level distance=5mm, sibling distance=5mm] 
	\tikzstyle{level 2}=[sibling distance=5mm] 
	\node {} [grow'=up]
	child {[fill] circle (2pt) 
		child {{} node [above] {$\scriptstyle A$}}
		child {{} node [above] {$\scriptstyle A$}}
		child {{} node [above] {$\scriptstyle X_1$} edge from parent [decorate,decoration={snake,amplitude=.4mm,segment length=2mm,post length=.5mm,pre length=1.5mm}] {}}
		child {{} node [above] {$\scriptstyle X_2$} edge from parent [decorate,decoration={snake,amplitude=.4mm,segment length=2mm,post length=.5mm,pre length=1.5mm}] {}}
		node [below left] (O51) {$\scriptstyle \unit$}}; 
	\draw (1.3,.5) node {$=$}; 
	\draw (3cm,0) node {} [grow'=up]
	child {[fill] circle (2pt) 
		child {{} node [above] (Al) {$\scriptstyle A$}}
		child {{} node [above] (Ar) {$\scriptstyle A$}}
		child {{} node [above] {$\scriptstyle X_1$} edge from parent [decorate,decoration={snake,amplitude=.4mm,segment length=2mm,post length=.5mm,pre length=1.5mm}] {}}
		child {[fill] circle (2pt)  node [above] (I1) {$\scriptstyle \unit$}}
		child {{} node [above] {$\scriptstyle X_2$} edge from parent [decorate,decoration={snake,amplitude=.4mm,segment length=2mm,post length=.5mm,pre length=1.5mm}] {}}
		child {[fill] circle (2pt)  node [above] (I2) {$\scriptstyle \unit$}}
		node [below left] (O51) {$\scriptstyle \unit$}};  
	\draw (7cm,0) node {} [grow'=up]
	child {[fill] circle (2pt) 
		child {{} node [above] (A1) {$\scriptstyle A$}}
		child {{} node [above] {$\scriptstyle X_1$} edge from parent [decorate,decoration={snake,amplitude=.4mm,segment length=2mm,post length=.5mm,pre length=1.5mm}] {}}
		child {{} node [above] (A2) {$\scriptstyle A$}}
		child {{} node [above] {$\scriptstyle X_2$} edge from parent [decorate,decoration={snake,amplitude=.4mm,segment length=2mm,post length=.5mm,pre length=1.5mm}] {}}
		child {{} node [above] (A3) {$\scriptstyle A$}}
		node [below left] (O52) {$\scriptstyle \unit$}};
	\node [rectangle, rounded corners, fit= (Al) (Ar), draw, opacity=.5, very thick, inner sep=-1] (E1) {};
	\node[circle, fit=(I1), draw, opacity=.5, very thick, inner sep=-2] (E2) {};
	\node[circle, fit=(I2), draw, opacity=.5, very thick, inner sep=-2] (E3) {};
	\path[->] (E1) edge [opacity=.5, very thick, bend left = 30] (A1);
	\path[->] (E2) edge [opacity=.5, very thick, bend left = 30] (A2);
	\path[->] (E3) edge [opacity=.5, very thick, bend left = 30] (A3);
	\end{tikzpicture}
	$$
	This is again a split coequalizer. The two maps going backwards are inclusions of factors, after doing the following kind of identifications, 
	$$
	\begin{array}{c}
	\begin{tikzpicture}[level distance=5mm, sibling distance=7mm] 
	\tikzstyle{level 2}=[sibling distance=5mm] 
	\tikzstyle{level 3}=[sibling distance=4mm] 
	\node {} [grow'=up]
	child {[fill] circle (2pt) 
		child {{} node [above] {$\scriptstyle A$}}
		child {{} node [above] {$\scriptstyle A$}}
		child {{} node [above] {$\scriptstyle X_1$} edge from parent [decorate,decoration={snake,amplitude=.4mm,segment length=2mm,post length=.5mm,pre length=1.5mm}] {}}
		child {{} node [above] {$\scriptstyle X_2$} edge from parent [decorate,decoration={snake,amplitude=.4mm,segment length=2mm,post length=.5mm,pre length=1.5mm}] {}}
		node [below left] (O51) {$\scriptstyle \unit$}};
	\tikzstyle{level 2}=[sibling distance=4mm] 
	\draw (3cm,0) node {} [grow'=up]
	child {[fill] circle (2pt) 
		child{[fill] circle (2pt) 
			child {{} node [above] {$\scriptstyle A$}}
			child {{} node [above] {$\scriptstyle A$}}
			node [below left] {$\scriptstyle \unit$}}
		child {{} node [above] {$\scriptstyle X_1$} edge from parent [decorate,decoration={snake,amplitude=.4mm,segment length=2mm,post length=.5mm,pre length=1.5mm}] {}}
		child {[fill] circle (2pt)  node [above] {$\scriptstyle \unit$}}
		child {{} node [above] {$\scriptstyle X_2$} edge from parent [decorate,decoration={snake,amplitude=.4mm,segment length=2mm,post length=.5mm,pre length=1.5mm}] {}}
		child {[fill] circle (2pt) node [above] {$\scriptstyle \unit$}}
		node [below left] (O52) {$\scriptstyle \unit$}}; 
	\draw (1.5cm,6mm) node {$=$};
	\end{tikzpicture}
	\end{array}
	$$
\end{proof}

We leave the following similar computation for the reader.

\begin{proposition}\label{ass}
	For any non-unital associative algebra $A$ in $\C C$, we have that $\mathtt{Ass}^{\C V}_{A}(n)(X_{1},\dots,X_{n})= (A\amalg\unit)\otimes\bigotimes_{i=1}^n(X_i\otimes (A\amalg\unit))$, $n\geq 1$. 
\end{proposition}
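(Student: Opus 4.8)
The plan is to imitate the split-coequalizer computations of Propositions~\ref{envinitial} and \ref{uass}. Write $\mathcal O=\mathtt{Ass}^{\C V}$; the only structural novelty is that $\mathcal O(m)=\unit$ for $m\geq 1$ whereas $\mathcal O(0)=\emptyset$, so that no corks are available. For $n\geq 1$, since $\mathcal O(m)=\unit$ and $z$ is strong monoidal, a corolla contributing to $\mathcal O_A^0(n)$ carries the $n$ snaky leaves $X_1,\dots,X_n$ in order, with some straight leaves $A$ filling the $n+1$ \emph{slots} (the positions before $X_1$, between consecutive $X_i$, and after $X_n$), and contributes the summand $A^{\otimes a_0}\otimes X_1\otimes A^{\otimes a_1}\otimes\cdots\otimes X_n\otimes A^{\otimes a_n}$ with $a_j\geq 0$; as such a corolla always has at least $n\geq 1$ leaves, no nullary vertex arises, and
\[
\mathcal O_A^0(n)(X_1,\dots,X_n)=\coprod_{a_0,\dots,a_n\geq 0}A^{\otimes a_0}\otimes X_1\otimes A^{\otimes a_1}\otimes\cdots\otimes X_n\otimes A^{\otimes a_n}.
\]
Similarly $\mathcal O_A^1(n)$ is the analogous coproduct of height-$\leq 3$ trees whose level-$2$ vertices are boxes $\mathcal O(q)=\unit$, $q\geq 1$, each placed over a run of $q$ straight leaves $A$ inside a slot.

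Next I would describe the two parallel arrows $\mathcal O_A^1\rightrightarrows\mathcal O_A^0$ of \eqref{rcoeq}: \emph{corolla contraction} replaces each box over $q$ copies of $A$ by a single $A$ via the (iterated) multiplication $A^{\otimes q}\to A$, so a slot carrying boxes of sizes $q_1,\dots,q_k$ becomes a run of $k$ copies of $A$; \emph{inner edge contraction} instead dissolves each box, so that same slot becomes a run of $q_1+\cdots+q_k$ copies of $A$. The reflexive backward arrow subdivides every straight leaf into an $\mathcal O(1)$-box over one copy of $A$. Comparing the two forward arrows, the coequalizer $\mathcal O_A(n)$ is obtained from $\mathcal O_A^0(n)$ by identifying, within each slot, a nonempty run $A^{\otimes a_j}$ with a single $A$, and nothing more — there being no cork with which to create or delete copies of $A$. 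Hence slot $j$ collapses to $A$ when $a_j\geq 1$ and remains $\unit$ when $a_j=0$, i.e.\ to the coproduct summand of $A\amalg\unit$ it represents, so that
\[
\mathcal O_A(n)(X_1,\dots,X_n)=(A\amalg\unit)\otimes X_1\otimes(A\amalg\unit)\otimes\cdots\otimes X_n\otimes(A\amalg\unit)=(A\amalg\unit)\otimes\bigotimes_{i=1}^n\bigl(X_i\otimes(A\amalg\unit)\bigr),
\]
naturally in $X_1,\dots,X_n$. The projection $e$ onto this object multiplies, on the $(a_0,\dots,a_n)$-summand, each run $A^{\otimes a_j}$ down to $A$ if $a_j\geq 1$ and to $\unit$ if $a_j=0$; it is the analogue of the map ``multiply consecutive copies of $A$'' in the proof of Proposition~\ref{uass}, without the step --- unavailable since $A$ is non-unital --- of inserting a copy of $A$ between consecutive $X_i$.

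Finally I would verify, exactly as in Propositions~\ref{envinitial} and \ref{uass}, that $\mathcal O_A^1(n)\rightrightarrows\mathcal O_A^0(n)\overset{e}{\to}\mathcal O_A(n)$ is a split coequalizer: the section of $e$ is the inclusion of the subcoproduct indexed by the reduced multi-indices $(\epsilon_0,\dots,\epsilon_n)\in\{0,1\}^{n+1}$, and the section of inner edge contraction is the arrow $\mathcal O_A^0(n)\to\mathcal O_A^1(n)$ that places a single box over each nonempty run of copies of $A$; with $f$ inner edge contraction and $g$ corolla contraction, the identities $ef=eg$, $es=\mathrm{id}$, $ft=\mathrm{id}$, $se=gt$ are then routine, and the induced functor-operad structure on the right-hand side is the evident one. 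I expect the only real work --- and the main place an error could hide --- to be the bookkeeping that confirms the coequalizer relation is \emph{precisely} ``collapse each nonempty run of copies of $A$ to a single copy'' and nothing more, together with the use of $n\geq 1$: for $n=0$ the displayed formula would read $A\amalg\unit$, whereas in fact $\mathcal O_A(0)=A$, consistently with \eqref{rcoeq} being in arity $0$ the tail of the cotriple resolution and with $\mathcal O$ having no nullary operation. (Alternatively, one may observe that the right-hand side equals $\mathtt{uAss}^{\C V}_{A^+}(n)$ for $A^+=A\amalg\unit$ the unitalization of $A$, since passing from $A$ to $A^+$ alters only arity-$0$ data, which does not enter $\mathcal O_A(n)$ for $n\geq 1$; this reduces the statement to Proposition~\ref{uass}.)
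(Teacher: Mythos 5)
Your proof is correct and follows exactly the route the paper intends: the statement is left to the reader as a computation ``similar'' to Proposition~\ref{uass}, and you carry it out by the same split-coequalizer method, with the right modifications (no corks since $\mathtt{Ass}^{\C V}(0)=\varnothing$, hence the empty run in each slot survives as a separate $\unit$-summand, and the backward section groups each nonempty run of copies of $A$ into a single box so that the four split-coequalizer identities hold). The parenthetical reduction to Proposition~\ref{uass} via unitalization is only heuristic as stated, but your main argument does not rely on it.
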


If $\mathcal O$ is concentrated in arity $1$, we have the following result, which is an obvious consequence of the coequalizer definition.


\begin{proposition}\label{anillos}
	If $\mathcal O(n)=\varnothing$ for $n\neq 1$, then for any $\mathcal O$-algebra (i.e.~left $z(\mathcal O(1))$-module) $A$ in $\C C$, $\mathcal O_{A}(1)(X_{1})=z(\mathcal O(1))\otimes X_{1}$ and $\mathcal O_{A}(n)(X_{1},\dots X_{n})=\varnothing$ for $n\geq 2$.
\end{proposition}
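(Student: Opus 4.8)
The plan is to unwind the definitions of $\mathcal O_A^0$ and $\mathcal O_A^1$ under the hypothesis that $\mathcal O(m)=\varnothing$ for $m\neq 1$. The key observation is that, since tensoring in $\C V$ preserves colimits in each variable, the initial object $\varnothing$ is absorbing for $\otimes$; hence the functor attached to a labeled tree is the initial object of the category of sequences of functors as soon as one of its inner vertices has arity $\neq 1$. Therefore the only summands contributing to the coproducts defining $\mathcal O_A^0(n)$ and $\mathcal O_A^1(n)$ come from trees all of whose inner vertices have arity $1$, and such a tree is a linear chain, so it has exactly one leaf.

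For $n\geq 2$ this forces $\mathcal O_A^0(n)=\varnothing=\mathcal O_A^1(n)$: a chain cannot carry $n\geq 2$ snaky leaves, and in $\mathcal O_A^1(n)$ the root vertex already has a single outgoing edge in level $2$, so there can be at most one level-$2$ leaf. Since the reflexive coequalizer of $\varnothing\rightrightarrows\varnothing$ is $\varnothing$, we get $\mathcal O_A(n)=\varnothing$. For $n=1$, the only surviving chain in $\mathcal O_A^0(1)$ is the corolla with one $\mathcal O(1)$-labeled vertex and one snaky leaf, so $\mathcal O_A^0(1)(X_1)=z(\mathcal O(1))\otimes X_1$. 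Likewise, a chain in $\mathcal O_A^1(1)$ has a unique leaf, which must be snaky and in level $2$; a chain of length $\geq 2$ would place its only leaf in level $\geq 3$, leaving no level-$2$ leaf, so again the only chain is the corolla with one $\mathcal O(1)$-vertex and one snaky leaf, giving $\mathcal O_A^1(1)(X_1)=z(\mathcal O(1))\otimes X_1$ as well. On this summand there is neither an inner edge nor a straight leaf, so the two structure maps and the reflexive degeneracy in \eqref{rcoeq} all restrict to the identity; hence $\mathcal O_A(1)(X_1)=z(\mathcal O(1))\otimes X_1$.

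There is no real obstacle here; the only points requiring a word of care are the remark that $\varnothing$ is absorbing for $\otimes$ (so that any arity-$\neq 1$ vertex annihilates a tree) and a careful reading of the level conventions in the definition of $\mathcal O_A^1$, which is what makes the arity-$1$ part of $\mathcal O_A^1$ coincide, as a summand, with that of $\mathcal O_A^0$ and makes the comparison maps become identities there. One may finally record that the induced functor-operad structure is the evident one, with $\circ_1$ coming from the monoid multiplication of $z(\mathcal O(1))$, recovering $z(\mathcal O(1))$ regarded as a functor-operad concentrated in arity $1$; this is not needed for the stated formulas.
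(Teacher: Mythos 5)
Your proof is correct and is exactly the argument the paper intends: the paper gives no written proof, declaring the statement ``an obvious consequence of the coequalizer definition,'' and your unwinding of $\mathcal O_A^0$ and $\mathcal O_A^1$ (initial object absorbing under $\otimes$, only arity-$1$ chains survive, and the surviving arity-$1$ summand is the all-snaky corolla on which both parallel maps are the identity, as the paper itself notes just before Proposition 2.6) is that consequence spelled out. The only cosmetic slip is attributing the colimit-preservation of $\otimes$ to $\C V$ where the relevant tensor is in $\C C$ (together with $z(\varnothing)=\varnothing$, since $z$ is a colimit-preserving strong monoidal functor), but both hold in this framework.
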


Notice that $\mathcal O_A^1(n)$ contains a corolla with $n$ leaves, all of them snaky. The two maps $\mathcal O_A^1(n)\rightrightarrows\mathcal O_A^0(n)$ are the identity on this corolla, hence we can neglect it in coequalizer computations. Similarly, if the operad unit is an isomorphism $u\colon\unit\cong \mathcal O(1)$ we can neglect the trees in $\mathcal O_A^1$ all whose level $2$ inner vertices have arity $1$. We use this obervation in the proof of the following result.

\begin{proposition}\label{cubo0}
	In the situation of Example \ref{counter1}, for any $\mathcal O$-algebra $A$, 
	$\mathcal O_{A}(n)=0$ for $n\geq 3$, $\mathcal O_{A}(2)(X_{1}, X_{2})=X_{1}\otimes X_{2}$, and $\mathcal O_{A}(1)(X_{1})=X_{1}\oplus (A/A^{2})\otimes X_{1}\oplus X_{1}\otimes(A/A^{2})$.
\end{proposition}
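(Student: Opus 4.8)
The plan is to unwind the coequalizer definition of $\mathcal O_A$ in \eqref{rcoeq} directly. Since $\C C=\C V$ is abelian and the structure of $\C C^{\C C^{(\mathbb N)}}$ is pointwise, $\mathcal O_A(n)(X_1,\dots,X_n)$ is the cokernel of the difference of the two maps $\mathcal O_A^1(n)\rightrightarrows\mathcal O_A^0(n)$ evaluated at $(X_1,\dots,X_n)$, and I would compute it using only that $\mathcal O(p)=0$ for $p\notin\{1,2\}$, that $\mathcal O(1)\cong\unit\cong\mathcal O(2)$ (we are in the symmetric case, so $z$ is the identity), and that $\mu\circ_1\mu=\mu\circ_2\mu=0$. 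A corolla contributing to $\mathcal O_A^0(n)$ has inner vertex $\mathcal O(p)$ with $n$ snaky leaves and $p-n$ straight leaves labelled $A$; as $p\geq n$ and $\mathcal O(p)=0$ unless $p\in\{1,2\}$, there are none for $n\geq 3$, so $\mathcal O_A^0(n)=0$ and hence $\mathcal O_A(n)=0$ for $n\geq 3$. For $n=2$ the only contribution is the corolla $\mathcal O(2)$ with both leaves snaky, so $\mathcal O_A^0(2)(X_1,X_2)=X_1\otimes X_2$; for $n=1$ the contributions are the corolla $\mathcal O(1)$ with its leaf snaky and the corolla $\mathcal O(2)$ with one snaky leaf in either of its two slots and one straight leaf $A$, so $\mathcal O_A^0(1)(X_1)=X_1\oplus X_1\otimes A\oplus A\otimes X_1$.

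Next I would cut down $\mathcal O_A^1$ using the two observations preceding the statement: one may discard the $n$-leaf corolla (all leaves snaky, no inner vertices) and, since the operad unit is an isomorphism $u\colon\unit\cong\mathcal O(1)$, every tree whose level-$2$ inner vertices all have arity $1$. A surviving tree in $\mathcal O_A^1(n)$ has root $\mathcal O(p)$ with $p\in\{1,2\}$, $n$ snaky leaves, and $p-n$ edges to level-$2$ inner vertices $\mathcal O(q)$ carrying $q$ straight $A$-leaves; since $\mathcal O(q)=0$ for $q\notin\{1,2\}$ and the arity-$1$ vertices are discarded, each surviving level-$2$ inner vertex must be $\mathcal O(2)$ with two $A$-leaves. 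For $n=2$ this forces $p=2$, leaving no slot for an inner vertex, so nothing survives and $\mathcal O_A(2)(X_1,X_2)=\mathcal O_A^0(2)(X_1,X_2)=X_1\otimes X_2$. For $n=1$ the surviving trees have root $\mathcal O(2)$ with one snaky leaf at slot $1$ or $2$ and a single level-$2$ inner vertex $\mathcal O(2)$, carrying two straight $A$-leaves, at the other slot; these represent the functors $X_1\otimes A\otimes A$ and $A\otimes A\otimes X_1$ respectively.

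Finally I would evaluate the two structure maps on this surviving part of $\mathcal O_A^1(1)$. Corolla contraction replaces the inner $\mathcal O(2)$ by the multiplication $m\colon A\otimes A\r A$ (the $\mathcal O$-algebra structure map of $A$), so on the two pieces it is $\id{X_1}\otimes m\colon X_1\otimes A\otimes A\r X_1\otimes A$ and $m\otimes\id{X_1}\colon A\otimes A\otimes X_1\r A\otimes X_1$, both landing in $\mathcal O_A^0(1)$. Inner-edge contraction instead grafts the inner $\mathcal O(2)$ into the root $\mathcal O(2)$, so it is $\mu\circ_1\mu$ or $\mu\circ_2\mu$, both of which vanish; hence it is the zero map. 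Therefore $\mathcal O_A(1)(X_1)$ is the cokernel of $(\id{X_1}\otimes m)\oplus(m\otimes\id{X_1})$ from $(X_1\otimes A\otimes A)\oplus(A\otimes A\otimes X_1)$ to $X_1\oplus(X_1\otimes A)\oplus(A\otimes X_1)$, which by right-exactness of $\otimes$ and the definition of $A/A^2$ equals $X_1\oplus X_1\otimes(A/A^2)\oplus(A/A^2)\otimes X_1$. The only real work is the tree bookkeeping of the first two paragraphs; the point worth stressing is that inner-edge contraction dies because $\mathcal O$ has no operations of arity $\geq 3$, so that---unlike in Propositions \ref{envinitial}, \ref{uass} and \ref{ass}---the coequalizer here is a genuinely nonsplit cokernel.
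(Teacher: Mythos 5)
Your proposal is correct and follows the same route as the paper: the paper's proof consists precisely of displaying, for $n=1$, the reduced coequalizer $\bigl((A\otimes A)\otimes X_1\oplus X_1\otimes(A\otimes A)\bigr)\rightrightarrows X_1\oplus A\otimes X_1\oplus X_1\otimes A$ with one arrow the product in $A$ and the other zero (since $\mu\circ_1\mu=\mu\circ_2\mu=0$), after invoking the same two reduction observations you use. You merely spell out the tree bookkeeping and the cases $n=2$, $n\geq 3$ that the paper leaves implicit.
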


\begin{proof}
	The only point which deserves mention is the fact that $\mathcal O_{A}(1)(X_{1})$ is by definition the coequalizer of 
	$$
	\begin{tikzpicture}[level distance=5mm, sibling distance=7mm] 
	\node {} [grow'=up]
	child {[fill] circle (2pt) 
		child {[fill] circle (2pt) 
			child {{} node [above] {$\scriptstyle A$}}
			child {{} node [above] {$\scriptstyle A$}}
			node [left] {$\scriptstyle \Bbbk\cdot\mu$}}
		child {{} node [above] {$\scriptstyle X_1$} edge from parent [decorate,decoration={snake,amplitude=.4mm,segment length=2mm,post length=.5mm,pre length=1.5mm}] {}}
		node [left] {$\scriptstyle \Bbbk\cdot\mu$}};
	\draw (1cm,.6cm) node {$\oplus$};
	\draw (2cm,0) node {} [grow'=up]
	child {[fill] circle (2pt) 
		child {{} node [above] {$\scriptstyle X_1$} edge from parent [decorate,decoration={snake,amplitude=.4mm,segment length=2mm,post length=.5mm,pre length=1.5mm}] {}}
		child {[fill] circle (2pt) 
			child {{} node [above] {$\scriptstyle A$}}
			child {{} node [above] {$\scriptstyle A$}}
			node [right] {$\scriptstyle \Bbbk\cdot\mu$}}
		node [left] {$\scriptstyle \Bbbk\cdot\mu$}};
	\draw (6,0) node {} [grow'=up] child{node [fill,circle, inner sep=0,outer sep =0, minimum size = 1.5mm] (RR) {} child{node [above] {$\scriptstyle X_1$} edge from parent [decorate,decoration={snake,amplitude=.4mm,segment length=2mm,post length=.5mm}] {}} node [right] {$\scriptstyle \Bbbk\cdot u$}};
	\draw (7cm,.6cm) node {$\oplus$};
	\draw (8cm,0) node {} [grow'=up]
	child {[fill] circle (2pt) 
		child {{} node [above] {$\scriptstyle A$}}
		child {{} node [above] {$\scriptstyle X_1$} edge from parent [decorate,decoration={snake,amplitude=.4mm,segment length=2mm,post length=.5mm,pre length=1.5mm}] {}}
		node [right] {$\scriptstyle \Bbbk\cdot\mu$}};
	\draw (9cm,.6cm) node {$\oplus$};
	\draw (10cm,0) node {} [grow'=up]
	child {[fill] circle (2pt) 
		child {{} node [above] {$\scriptstyle X_1$} edge from parent [decorate,decoration={snake,amplitude=.4mm,segment length=2mm,post length=.5mm,pre length=1.5mm}] {}}
		child {{} node [above] {$\scriptstyle A$}}
		node [right] {$\scriptstyle \Bbbk\cdot\mu$}};   
	\path[->] (3cm,.8cm) edge [very thick, opacity=.5] node [above, opacity=1] {$\scriptstyle 0$} (5cm,.8cm);
	\path[->] (3cm,.6cm) edge [very thick, opacity=.5] node [below, opacity=1] {\scriptsize product in $A$} (5cm,.6cm);
	\end{tikzpicture}
	$$
\end{proof}

We now compute enveloping functor-operads for algebras over free operads.

\begin{proposition}\label{free}
	If $\mathcal O=\mathcal F(V)$ is the free operad on a sequence $V=\{V(n)\}_{n\geq 0}$ of objects in $\C V$, an $\mathcal O$-algebra in $\C C$ is just an object $A$ equipped with structure maps $z(V(n))\otimes A^{\otimes n}\r A$, $n\geq 0$, that we regard as corolla contractions,
	\begin{equation}\label{newcc}
	\begin{array}{c}
	\begin{tikzpicture}[level distance=5mm, sibling distance=5mm] 
	\draw (-4,0) node {} [grow'=up]
	child {node [fill,circle, inner sep=0,outer sep =0, minimum size = 1.5mm] (L) {} 
		child{node [above] (A) {$\scriptstyle A$}}
		child{node [above] {$\cdots$} edge from parent[draw=none]}
		child{node [above] (O) {$\scriptstyle A$}}
		node [below left] {$\scriptstyle V(n)$}};   
	\draw [decoration={brace}, decorate] ($(A)+(-.08,.15)$) -- node [above] {\scriptsize $n$ leaves} ($(O)+(.08,.15)$) ;
	\draw (-2,0) node {} [grow'=up] child{coordinate (R) node [above] {$\scriptstyle A$}};
	\path[->] ($(L)+(3mm,0)$) edge  [very thick, opacity=.5] ($(R)+(-3mm,0)$);
	\end{tikzpicture}
	\end{array}
	\end{equation}
	and $\mathcal O_A(n)$ is the coproduct of all trees equipped with a distinguished subset of $n$ snaky leaves, inner vertices labelled with $V$ and straight leaves with $A$,  which do not contain corollas as in \eqref{newcc}, e.g.
	\begin{equation*}\label{corks2}
	\begin{tikzpicture}[level distance=5mm, sibling distance=9mm] 
	\tikzstyle{level 3}=[sibling distance=3mm]
		\node {} [grow'=up]
		child{node [fill,circle, inner sep=0,outer sep =0, minimum size = 1.5mm] {} 
			child{node [fill,circle, inner sep=0,outer sep =0, minimum size = 1.5mm] {}
					child{edge from parent [decorate,decoration={snake,amplitude=.4mm,segment length=2mm,post length=.5mm,pre length=1.5mm}] {}}
					child{edge from parent [decorate,decoration={snake,amplitude=.4mm,segment length=2mm,post length=.5mm,pre length=1.5mm}] {}}
					child{edge from parent [decorate,decoration={snake,amplitude=.4mm,segment length=2mm,post length=.5mm,pre length=1.5mm}] {}}
			node [left] {$\scriptstyle V(3)$}}
            child{{} node [above] {$\scriptstyle A$}}
			  child{node [fill,circle, inner sep=0,outer sep =0, minimum size = 1.5mm] {}
				child{edge from parent [decorate,decoration={snake,amplitude=.4mm,segment length=2mm,post length=.5mm,pre length=1.5mm}] {}}
				child{{} node [above] {$\scriptstyle A$}}				
			  node [right] {$\scriptstyle V(2)$}}
           child{edge from parent [decorate,decoration={snake,amplitude=.4mm,segment length=2mm,post length=.5mm,pre length=1.5mm}] {}}
		node [below right] {$\scriptstyle V(4)$}};
	\end{tikzpicture}
	\end{equation*}
	Composition is simply given by grafting and, if necessary, iterated corolla contractions until no corolla as in \eqref{newcc} remains, e.g.~$\circ_{4}\colon\mathcal O_{A}(5)\circ_{4}\mathcal O_{A}(0)\r\mathcal O_{A}(4)$ contains
	\[
	\begin{tikzpicture}[level distance=5mm, sibling distance=10mm] 
	\tikzstyle{level 3}=[sibling distance=3mm]
	\tikzstyle{level 2}=[sibling distance=9mm]
	\node {} [grow'=up]
	child{node [fill,circle, inner sep=0,outer sep =0, minimum size = 1.5mm] {} 
		child{node [fill,circle, inner sep=0,outer sep =0, minimum size = 1.5mm] {}
			child{edge from parent [decorate,decoration={snake,amplitude=.4mm,segment length=2mm,post length=.5mm,pre length=1.5mm}] {}}
			child{edge from parent [decorate,decoration={snake,amplitude=.4mm,segment length=2mm,post length=.5mm,pre length=1.5mm}] {}}
			child{edge from parent [decorate,decoration={snake,amplitude=.4mm,segment length=2mm,post length=.5mm,pre length=1.5mm}] {}}
			node [left] {$\scriptstyle V(3)$}}
		child{{} node [above] {$\scriptstyle A$}}
		child{node [fill,circle, inner sep=0,outer sep =0, minimum size = 1.5mm] {}
			child{edge from parent [decorate,decoration={snake,amplitude=.4mm,segment length=2mm,post length=.5mm,pre length=1.5mm}] {}}
			child{{} node [above] {$\scriptstyle A$}}				
			node [right] {$\scriptstyle V(2)$}}
		child{edge from parent [decorate,decoration={snake,amplitude=.4mm,segment length=2mm,post length=.5mm,pre length=1.5mm}] {}}
		node [below right] {$\scriptstyle V(4)$}};
		\tikzstyle{level 2}=[sibling distance=5mm]
		\draw (1.8cm,.5cm) node {$\circ_4$};
		\draw (2.5cm,0) node {} [grow'=up]
		child{{} node [above] {$\scriptstyle A$}};
		\draw (3.5cm,.5cm) node {$=$};
			\tikzstyle{level 2}=[sibling distance=9mm]
		\draw (5.5cm,0) node {} [grow'=up]
		child{node [fill,circle, inner sep=0,outer sep =0, minimum size = 1.5mm] {} 
			child{node [fill,circle, inner sep=0,outer sep =0, minimum size = 1.5mm] {}
					child{edge from parent [decorate,decoration={snake,amplitude=.4mm,segment length=2mm,post length=.5mm,pre length=1.5mm}] {}}
					child{edge from parent [decorate,decoration={snake,amplitude=.4mm,segment length=2mm,post length=.5mm,pre length=1.5mm}] {}}
					child{edge from parent [decorate,decoration={snake,amplitude=.4mm,segment length=2mm,post length=.5mm,pre length=1.5mm}] {}}
			node [left] {$\scriptstyle V(3)$}}
            child{{} node [above] {$\scriptstyle A$}}
			  child{node [fill,circle, inner sep=0,outer sep =0, minimum size = 1.5mm] {}
				child{{} node [above] (A1) {$\scriptstyle A$}}
				child{{} node [above] (A2) {$\scriptstyle A$}}				
			  node [right] (V2) {$\scriptstyle V(2)$}}
           child{edge from parent [decorate,decoration={snake,amplitude=.4mm,segment length=2mm,post length=.5mm,pre length=1.5mm}] {}}
		node [below right] {$\scriptstyle V(4)$}};
  \tikzstyle{level 2}=[sibling distance=5mm]
  \tikzstyle{level 3}=[sibling distance=3mm]
\draw (9cm,0) node {} [grow'=up]
    child{node [fill,circle, inner sep=0,outer sep =0, minimum size = 1.5mm] {} 
      child{node [fill,circle, inner sep=0,outer sep =0, minimum size = 1.5mm] {}
          child{edge from parent [decorate,decoration={snake,amplitude=.4mm,segment length=2mm,post length=.5mm,pre length=1.5mm}] {}}
          child{edge from parent [decorate,decoration={snake,amplitude=.4mm,segment length=2mm,post length=.5mm,pre length=1.5mm}] {}}
          child{edge from parent [decorate,decoration={snake,amplitude=.4mm,segment length=2mm,post length=.5mm,pre length=1.5mm}] {}}
      node [left] {$\scriptstyle V(3)$}}
            child{{} node [above] {$\scriptstyle A$}}
        child{{}
        node [above] (A4) {$\scriptstyle A$}}
           child{edge from parent [decorate,decoration={snake,amplitude=.4mm,segment length=2mm,post length=.5mm,pre length=1.5mm}] {}}
    node [below right] {$\scriptstyle V(4)$}};
\node[rectangle, rounded corners, fit= (A1) (A2) (V2), draw, opacity=.5, very thick, inner sep=-2] (E1) {};
\path[opacity=.5, very thick, bend left = 20] (E1) edge [->] (A4.north);
	\end{tikzpicture}
	\]
	The unit is the inclusion in arity $1$ of the tree consisting of just one snaky leaf (which represents the identity functor).
\end{proposition}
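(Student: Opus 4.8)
The plan is to unfold the reflexive coequalizer \eqref{rcoeq} explicitly when $\mathcal O=\mathcal F(V)$ and then identify it with the claimed coproduct of reduced trees by exhibiting a \emph{split} coequalizer, exactly as in the proofs of Propositions \ref{envinitial} and \ref{uass}. The identification of $\mathcal F(V)$-algebras in $\C C$ with objects $A$ carrying structure maps $z(V(n))\otimes A^{\otimes n}\r A$ is immediate from the universal property of the free operad; we record these maps as the corolla contractions \eqref{newcc}.

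First I would rewrite $\mathcal O_A^0$ and $\mathcal O_A^1$ in terms of $V$. Substituting $\mathcal F(V)(m)=\coprod_R\bigotimes_{v\in R}V(\widetilde v)$, the coproduct over trees $R$ with $m$ leaves, into a corolla labelled $\mathcal O(m)$ expands that corolla into a $V$-labelled tree with the same leaves. Hence $\mathcal O_A^0(n)$ becomes the coproduct of $V$-labelled trees with $n$ distinguished snaky leaves and all remaining leaves straight (labelled $A$), and $\mathcal O_A^1(n)$ becomes the coproduct of such trees $T$ equipped with a \emph{cut}: a decomposition of $T$ into a root subtree $R$ containing every snaky leaf, together with the subtrees of $T$ hanging off the non-snaky leaves of $R$, which are required to have only straight leaves; here $R$ and each hanging subtree, being expansions of a single $\mathcal O$-labelled vertex, are arbitrary (possibly trivial) $V$-labelled trees. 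Under these identifications the two parallel arrows of \eqref{rcoeq} become: inner-edge contraction, which is operad composition in $\mathcal F(V)$, hence grafting, hence forgets the cut and returns $T$; and corolla contraction, which applies the $\mathcal F(V)$-algebra structure maps, i.e.\ iterates \eqref{newcc} along each hanging subtree until it collapses to a single straight leaf, returning $R$ with these new straight leaves. The reflexive backwards arrow is leaf subdivision, as in \eqref{treeops}.

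Next I would build the split coequalizer. Let $W(n)$ be the coproduct over those $V$-labelled trees with $n$ snaky leaves (rest straight) that contain no subcorolla of the shape \eqref{newcc}; in arity $0$ this is just the trivial tree labelled $A$, consistently with the identification $\mathcal O_A(0)=A$ recorded after \eqref{rcoeq}. Define $e\colon\mathcal O_A^0(n)\r W(n)$ by evaluating, via \eqref{newcc}, every maximal straight subtree of $T$ (i.e.\ every maximal full subtree all of whose leaves are straight) down to a single straight leaf; the remaining tree has, at every vertex, a child leading to a snaky leaf, hence contains no subcorolla \eqref{newcc} and lies in $W(n)$, and $e$ restricts to the identity on $W(n)$ since a reduced tree has only trivial maximal straight subtrees. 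Let $s$ be the coproduct inclusion $W(n)\hookrightarrow\mathcal O_A^0(n)$ and let $t\colon\mathcal O_A^0(n)\r\mathcal O_A^1(n)$ send $T$ to $T$ cut along its maximal straight subtrees (for $n=0$, cut off all of $T$). Then $es$ and $ft$ are identities by construction; $se=gt$ because both send $T$ to the tree obtained by pruning and collapsing the maximal straight subtrees; and $ef=eg$ because both compute the same fully contracted form of the underlying tree, the maximal straight subtrees of $T$ refining those visible in any cut. A split coequalizer being absolute, this gives $\mathcal O_A(n)=W(n)$. Finally, the functor-operad structure on $\mathcal O_A$ is the one induced from $\mathcal O_A^0$: composition is grafting of $V$-labelled trees (operad composition in $\mathcal F(V)$ is grafting) followed by the projection $e$, i.e.\ grafting followed by iterated corolla contractions until no subcorolla \eqref{newcc} remains; and the unit is the image of the $\mathcal F(V)$-unit, namely the trivial tree with one snaky leaf.

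The main obstacle is the third step, specifically the identities $se=gt$ and $ef=eg$: together they amount to the statement that corolla contraction of straight subtrees is confluent with a unique normal form, the ``no subcorolla \eqref{newcc}'' tree, independent of the chosen cut. Freeness of $\mathcal O$ is exactly what makes this transparent, since there are no operad relations imposing coherence on the order of contractions, so the contractions are just composites of the structure maps performed along disjoint or nested subtrees; but some bookkeeping is needed for the trivial-tree graftings (as with the identifications in the proof of Proposition \ref{uass}) and for the arity-$0$ case, where one may alternatively simply invoke $\mathcal O_A(0)=A$.
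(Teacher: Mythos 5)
Your proposal is correct and follows essentially the same route as the paper's proof: unfold $\mathcal O_A^0$ and $\mathcal O_A^1$ into coproducts of $V$-labelled trees (with $\mathcal O_A^1$ indexed by a tree together with a cut, i.e.\ the tuples $(T_0;T_1,\dots,T_t)$), and exhibit a split coequalizer whose projection is iterated corolla contraction, whose section is the inclusion of the reduced trees, and whose splitting $t$ prunes along the maximal straight subtrees. Your explicit verification of $se=gt$ and $ef=eg$ via confluence of contractions is the only part the paper leaves implicit, and it is handled correctly.
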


\begin{proof}
Recall from \cite[\S5]{htnso} the structure of the free operad. The functor $\mathcal O_A^0(n)$ is the coproduct of all trees with $n$ snaky leaves where inner vertices are labelled with $V$ and straight leaves with $A$, e.g.~$(n=5)$
\begin{equation}\label{ooo}
\begin{array}{c}
\begin{tikzpicture}[level distance=5mm, sibling distance=9mm] 
\tikzstyle{level 3}=[sibling distance=4mm]
\node {} [grow'=up]
child{node [fill,circle, inner sep=0,outer sep =0, minimum size = 1.5mm] {} 
	child{node [fill,circle, inner sep=0,outer sep =0, minimum size = 1.5mm] {}
		child{edge from parent [decorate,decoration={snake,amplitude=.4mm,segment length=2mm,post length=.5mm,pre length=1.5mm}] {}}
		child{edge from parent [decorate,decoration={snake,amplitude=.4mm,segment length=2mm,post length=.5mm,pre length=1.5mm}] {}}
		child{edge from parent [decorate,decoration={snake,amplitude=.4mm,segment length=2mm,post length=.5mm,pre length=1.5mm}] {}}
		node [left] {$\scriptstyle V(3)$}}
	child{node [fill,circle, inner sep=0,outer sep =0, minimum size = 1.5mm] {}
		child{{} node [above] {$\scriptstyle A$}}
	node [right] {$\scriptstyle V(1)$}}
	child{node [fill,circle, inner sep=0,outer sep =0, minimum size = 1.5mm] {}
		child{edge from parent [decorate,decoration={snake,amplitude=.4mm,segment length=2mm,post length=.5mm,pre length=1.5mm}] {}}
		child{node [fill,circle, inner sep=0,outer sep =0, minimum size = 1.5mm] {} node [above] {$\scriptstyle V(0)$}}				
		node [right] {$\scriptstyle V(2)$}}
	child{edge from parent [decorate,decoration={snake,amplitude=.4mm,segment length=2mm,post length=.5mm,pre length=1.5mm}] {}}
	node [below right] {$\scriptstyle V(4)$}};
\end{tikzpicture}
\end{array}
\end{equation}
Similarly, $\mathcal O_A^1(n)$ is a coproduct indexed by the collections of trees $(T_0;T_1,\dots T_t)$ such that $T_0$ has $t+n$ leaves, $n$ of them snaky, and the leaves of $T_1,\dots, T_t$, if any, are straight. The factor indexed by $(T_0;T_1,\dots T_t)$ is obtained by labelling inner vertices with $V$, the leaves of $T_1,\dots, T_t$ with $A$, and finally grafting each $T_i$, $1\leq i\leq t$, into the $i^{\text{th}}$ straight leaf of $T_0$, e.g.~$(t=1, n=5)$
\[
\left(
\begin{array}{c}
\begin{tikzpicture}[level distance=5mm, sibling distance=9mm] 
\tikzstyle{level 3}=[sibling distance=4mm]
\node {} [grow'=up]
child{node [fill,circle, inner sep=0,outer sep =0, minimum size = 1.5mm] {} 
	child{node [fill,circle, inner sep=0,outer sep =0, minimum size = 1.5mm] {}
		child{edge from parent [decorate,decoration={snake,amplitude=.4mm,segment length=2mm,post length=.5mm,pre length=1.5mm}] {}}
		child{edge from parent [decorate,decoration={snake,amplitude=.4mm,segment length=2mm,post length=.5mm,pre length=1.5mm}] {}}
		child{edge from parent [decorate,decoration={snake,amplitude=.4mm,segment length=2mm,post length=.5mm,pre length=1.5mm}] {}}
		node [left] {$\scriptstyle V(3)$}}
	child{node [fill,circle, inner sep=0,outer sep =0, minimum size = 1.5mm] {}
		child{}
		node [right] {$\scriptstyle V(1)$}}
	child{node [fill,circle, inner sep=0,outer sep =0, minimum size = 1.5mm] {}
		child{edge from parent [decorate,decoration={snake,amplitude=.4mm,segment length=2mm,post length=.5mm,pre length=1.5mm}] {}}
		child{node [fill,circle, inner sep=0,outer sep =0, minimum size = 1.5mm] {} node [above] {$\scriptstyle V(0)$}}				
		node [right] {$\scriptstyle V(2)$}}
	child{edge from parent [decorate,decoration={snake,amplitude=.4mm,segment length=2mm,post length=.5mm,pre length=1.5mm}] {}}
	node [below right] {$\scriptstyle V(4)$}};
\draw (1.5cm,.5cm) node {;};
\tikzstyle{level 2}=[sibling distance=5mm]
\draw (2.7cm,0) node {} [grow'=up]
child{node [fill,circle, inner sep=0,outer sep =0, minimum size = 1.5mm] {} 
	child{{} node [above] {$\scriptstyle A$}}
	child{{} node [above] {$\scriptstyle A$}}
	child{node [fill,circle, inner sep=0,outer sep =0, minimum size = 1.5mm] {} node [above] {$\scriptstyle V(0)$}}
	child{{} node [above] {$\scriptstyle A$}}
	node [below right] {$\scriptstyle V(4)$}};
\end{tikzpicture}
\end{array}\right)\qquad
\begin{array}{c}
\begin{tikzpicture}[level distance=5mm, sibling distance=9mm] 
\tikzstyle{level 3}=[sibling distance=4mm]
\tikzstyle{level 4}=[sibling distance=4mm]
\node {} [grow'=up]
child{node [fill,circle, inner sep=0,outer sep =0, minimum size = 1.5mm] {} 
	child{node [fill,circle, inner sep=0,outer sep =0, minimum size = 1.5mm] {}
		child{edge from parent [decorate,decoration={snake,amplitude=.4mm,segment length=2mm,post length=.5mm,pre length=1.5mm}] {}}
		child{edge from parent [decorate,decoration={snake,amplitude=.4mm,segment length=2mm,post length=.5mm,pre length=1.5mm}] {}}
		child{edge from parent [decorate,decoration={snake,amplitude=.4mm,segment length=2mm,post length=.5mm,pre length=1.5mm}] {}}
		node [left] {$\scriptstyle V(3)$}}
	child{node [fill,circle, inner sep=0,outer sep =0, minimum size = 1.5mm] {}
		child{node [fill,circle, inner sep=0,outer sep =0, minimum size = 1.5mm] {} 
			child{{} node [above] {$\scriptstyle A$}}
			child{{} node [above] {$\scriptstyle A$}}
			child{node [fill,circle, inner sep=0,outer sep =0, minimum size = 1.5mm] {} node [above] {$\scriptstyle V(0)$}}
			child{{} node [above] {$\scriptstyle A$}}
			node [right] {$\scriptstyle V(4)$}}
		node [right] {$\scriptstyle V(1)$}}
	child{node [fill,circle, inner sep=0,outer sep =0, minimum size = 1.5mm] {}
		child{edge from parent [decorate,decoration={snake,amplitude=.4mm,segment length=2mm,post length=.5mm,pre length=1.5mm}] {}}
		child{node [fill,circle, inner sep=0,outer sep =0, minimum size = 1.5mm] {} node [above] {$\scriptstyle V(0)$}}				
		node [right] {$\scriptstyle V(2)$}}
	child{edge from parent [decorate,decoration={snake,amplitude=.4mm,segment length=2mm,post length=.5mm,pre length=1.5mm}] {}}
	node [below right] {$\scriptstyle V(4)$}};
\end{tikzpicture}
\end{array}
\]
The maps $\mathcal O_A^1(n)\rightrightarrows\mathcal O_A^0(n)$ are defined as follows. The first one maps the factor $(T_0;T_1,\dots T_t)$ to the factor $T_0$ by iterated corolla contraction $T_i\r A$, $1\leq i\leq t$, e.g.~starting with the example in the previous diagram
\[
\begin{tikzpicture}[level distance=5mm, sibling distance=9mm] 
\tikzstyle{level 3}=[sibling distance=4mm]
\tikzstyle{level 4}=[sibling distance=4mm]
\node {} [grow'=up]
child{node [fill,circle, inner sep=0,outer sep =0, minimum size = 1.5mm] {} 
	child{node [fill,circle, inner sep=0,outer sep =0, minimum size = 1.5mm] {}
		child{edge from parent [decorate,decoration={snake,amplitude=.4mm,segment length=2mm,post length=.5mm,pre length=1.5mm}] {}}
		child{edge from parent [decorate,decoration={snake,amplitude=.4mm,segment length=2mm,post length=.5mm,pre length=1.5mm}] {}}
		child{edge from parent [decorate,decoration={snake,amplitude=.4mm,segment length=2mm,post length=.5mm,pre length=1.5mm}] {}}
		node [left] {$\scriptstyle V(3)$}}
	child{node [fill,circle, inner sep=0,outer sep =0, minimum size = 1.5mm] {}
		child{node [fill,circle, inner sep=0,outer sep =0, minimum size = 1.5mm] {} 
			child{{} node [above] {$\scriptstyle A$}}
			child{{} node [above] {$\scriptstyle A$}}
			child{node [fill,circle, inner sep=0,outer sep =0, minimum size = 1.5mm] {} node (V00) [above] {$\scriptstyle V(0)$}}
			child{{} node [above] {$\scriptstyle A$}}
			node [right] {$\scriptstyle V(4)$}}
		node [left] {$\scriptstyle V(1)$}}
	child{node [fill,circle, inner sep=0,outer sep =0, minimum size = 1.5mm] {}
		child{edge from parent [decorate,decoration={snake,amplitude=.4mm,segment length=2mm,post length=.5mm,pre length=1.5mm}] {}}
		child{node [fill,circle, inner sep=0,outer sep =0, minimum size = 1.5mm] {} node [above] {$\scriptstyle V(0)$}}				
		node [left] {$\scriptstyle V(2)$}}
	child{edge from parent [decorate,decoration={snake,amplitude=.4mm,segment length=2mm,post length=.5mm,pre length=1.5mm}] {}}
	node [below right] {$\scriptstyle V(4)$}};
\tikzstyle{level 4}=[sibling distance=3mm]
\draw (4.2cm,0) node {} [grow'=up]
child{node [fill,circle, inner sep=0,outer sep =0, minimum size = 1.5mm] {} 
	child{node [fill,circle, inner sep=0,outer sep =0, minimum size = 1.5mm] {}
		child{edge from parent [decorate,decoration={snake,amplitude=.4mm,segment length=2mm,post length=.5mm,pre length=1.5mm}] {}}
		child{edge from parent [decorate,decoration={snake,amplitude=.4mm,segment length=2mm,post length=.5mm,pre length=1.5mm}] {}}
		child{edge from parent [decorate,decoration={snake,amplitude=.4mm,segment length=2mm,post length=.5mm,pre length=1.5mm}] {}}
		node [left] {$\scriptstyle V(3)$}}
	child{node [fill,circle, inner sep=0,outer sep =0, minimum size = 1.5mm] {}
		child{node [fill,circle, inner sep=0,outer sep =0, minimum size = 1.5mm] {} 
			child{{} node [above] (Al) {$\scriptstyle A$}}
			child{{} node [above] {$\scriptstyle A$}}
			child{{} node [above] (Am) {$\scriptstyle A$}}
			child{{} node [above] (Ar) {$\scriptstyle A$}}
			node [right] (V4) {$\scriptstyle V(4)$}}
		node [left] {$\scriptstyle V(1)$}}
	child{node [fill,circle, inner sep=0,outer sep =0, minimum size = 1.5mm] {}
		child{edge from parent [decorate,decoration={snake,amplitude=.4mm,segment length=2mm,post length=.5mm,pre length=1.5mm}] {}}
		child{node [fill,circle, inner sep=0,outer sep =0, minimum size = 1.5mm] {} node [above] {$\scriptstyle V(0)$}}				
		node [left] {$\scriptstyle V(2)$}}
	child{edge from parent [decorate,decoration={snake,amplitude=.4mm,segment length=2mm,post length=.5mm,pre length=1.5mm}] {}}
	node [below right] {$\scriptstyle V(4)$}};
\draw (8.4cm,0) node {} [grow'=up]
child{node [fill,circle, inner sep=0,outer sep =0, minimum size = 1.5mm] {} 
	child{node [fill,circle, inner sep=0,outer sep =0, minimum size = 1.5mm] {}
		child{edge from parent [decorate,decoration={snake,amplitude=.4mm,segment length=2mm,post length=.5mm,pre length=1.5mm}] {}}
		child{edge from parent [decorate,decoration={snake,amplitude=.4mm,segment length=2mm,post length=.5mm,pre length=1.5mm}] {}}
		child{edge from parent [decorate,decoration={snake,amplitude=.4mm,segment length=2mm,post length=.5mm,pre length=1.5mm}] {}}
		node [left] {$\scriptstyle V(3)$}}
	child{node [fill,circle, inner sep=0,outer sep =0, minimum size = 1.5mm] {}
		child{{} node [above] (A) {$\scriptstyle A$}}
		node [left] {$\scriptstyle V(1)$}}
	child{node [fill,circle, inner sep=0,outer sep =0, minimum size = 1.5mm] {}
		child{edge from parent [decorate,decoration={snake,amplitude=.4mm,segment length=2mm,post length=.5mm,pre length=1.5mm}] {}}
		child{node [fill,circle, inner sep=0,outer sep =0, minimum size = 1.5mm] {} node [above] {$\scriptstyle V(0)$}}				
		node [left] {$\scriptstyle V(2)$}}
	child{edge from parent [decorate,decoration={snake,amplitude=.4mm,segment length=2mm,post length=.5mm,pre length=1.5mm}] {}}
	node [below right] {$\scriptstyle V(4)$}};
\node[circle, fit=(V00), draw, opacity=.5, very thick, inner sep=-3] (E2) {};
\path[->] (E2) edge [opacity=.5, very thick, bend left=20] (Am.north);
	\node[rectangle, rounded corners, fit= (Al) (Ar) (V4), draw, opacity=.5, very thick, inner sep=-2] (E1) {};
	\path[opacity=.5, very thick, bend left = 20] (E1) edge [->] (A);
\end{tikzpicture}
\]
The second one maps via the identity the factor $(T_0;T_1,\dots T_t)$ to the factor of the tree obtained by grafting each $T_i$, $1\leq i\leq t$, into the $i^{\text{th}}$ straight leaf of $T_0$. The natural projection $\mathcal O_A^0(n)\rightarrow\mathcal O_A(n)$ is defined by iterated corolla contraction, until no corolla as in \eqref{newcc} remains, e.g. 
\[
\begin{tikzpicture}[level distance=5mm, sibling distance=9mm] 
\tikzstyle{level 3}=[sibling distance=4mm]
\node {} [grow'=up]
child{node [fill,circle, inner sep=0,outer sep =0, minimum size = 1.5mm] {} 
	child{node [fill,circle, inner sep=0,outer sep =0, minimum size = 1.5mm] {}
		child{edge from parent [decorate,decoration={snake,amplitude=.4mm,segment length=2mm,post length=.5mm,pre length=1.5mm}] {}}
		child{edge from parent [decorate,decoration={snake,amplitude=.4mm,segment length=2mm,post length=.5mm,pre length=1.5mm}] {}}
		child{edge from parent [decorate,decoration={snake,amplitude=.4mm,segment length=2mm,post length=.5mm,pre length=1.5mm}] {}}
		node [left] {$\scriptstyle V(3)$}}
	child{node [fill,circle, inner sep=0,outer sep =0, minimum size = 1.5mm] {}
		child{{} node [above] (A0) {$\scriptstyle A$}}
		node [left] (V1) {$\scriptstyle V(1)$}}
	child{node [fill,circle, inner sep=0,outer sep =0, minimum size = 1.5mm] {}
		child{edge from parent [decorate,decoration={snake,amplitude=.4mm,segment length=2mm,post length=.5mm,pre length=1.5mm}] {}}
		child{node [fill,circle, inner sep=0,outer sep =0, minimum size = 1.5mm] {} node [above] (V0) {$\scriptstyle V(0)$}}				
		node [left] {$\scriptstyle V(2)$}}
	child{edge from parent [decorate,decoration={snake,amplitude=.4mm,segment length=2mm,post length=.5mm,pre length=1.5mm}] {}}
	node [below right] {$\scriptstyle V(4)$}};
	\draw (5cm,0) node {} [grow'=up]
	child{node [fill,circle, inner sep=0,outer sep =0, minimum size = 1.5mm] {} 
		child{node [fill,circle, inner sep=0,outer sep =0, minimum size = 1.5mm] {}
			child{edge from parent [decorate,decoration={snake,amplitude=.4mm,segment length=2mm,post length=.5mm,pre length=1.5mm}] {}}
			child{edge from parent [decorate,decoration={snake,amplitude=.4mm,segment length=2mm,post length=.5mm,pre length=1.5mm}] {}}
			child{edge from parent [decorate,decoration={snake,amplitude=.4mm,segment length=2mm,post length=.5mm,pre length=1.5mm}] {}}
			node [left] {$\scriptstyle V(3)$}}
		child{{} node [above] (A1) {$\scriptstyle A$}}
		child{node [fill,circle, inner sep=0,outer sep =0, minimum size = 1.5mm] {}
			child{edge from parent [decorate,decoration={snake,amplitude=.4mm,segment length=2mm,post length=.5mm,pre length=1.5mm}] {}}
			child{{} node [above] (A2) {$\scriptstyle A$}}				
			node [left] {$\scriptstyle V(2)$}}
		child{edge from parent [decorate,decoration={snake,amplitude=.4mm,segment length=2mm,post length=.5mm,pre length=1.5mm}] {}}
		node [below right] {$\scriptstyle V(4)$}};
	\node[rectangle, rounded corners, fit= (A0) (V1), draw, opacity=.5, very thick, inner sep=-2] (E1) {};
	\path[opacity=.5, very thick, bend left = 30] (E1.north) edge [->] (A1.north);
	\node[circle, fit=(V0), draw, opacity=.5, very thick, inner sep=-2] (E2) {};
	\path[->] (E2) edge [opacity=.5, very thick, bend left = 10] (A2);
\end{tikzpicture}
\]
This is a split coequalizer. The arrows going backwards are defined as follows: $\mathcal O_A(n)\rightarrow\mathcal O_A^0(n)$ is a plain inclusion of coproduct factors, and $\mathcal O_A^0(n)\rightarrow\mathcal O_A^1(n)$ sends via the identity the factor corresponding to a tree $T$ with $n$ snaky leaves, to the factor $(T_0;T_1,\dots,T_t)$ obtained by pruning $T$ in such a way that the trees with straight leaves $T_1,\dots,T_t$ are as big as possible, e.g.~if $T$ is \eqref{ooo},
\[
\left(
\begin{array}{c}
\begin{tikzpicture}[level distance=5mm, sibling distance=8mm] 
\tikzstyle{level 3}=[sibling distance=3mm]
\node {} [grow'=up]
child{node [fill,circle, inner sep=0,outer sep =0, minimum size = 1.5mm] {} 
	child{node [fill,circle, inner sep=0,outer sep =0, minimum size = 1.5mm] {}
		child{edge from parent [decorate,decoration={snake,amplitude=.4mm,segment length=2mm,post length=.5mm,pre length=1.5mm}] {}}
		child{edge from parent [decorate,decoration={snake,amplitude=.4mm,segment length=2mm,post length=.5mm,pre length=1.5mm}] {}}
		child{edge from parent [decorate,decoration={snake,amplitude=.4mm,segment length=2mm,post length=.5mm,pre length=1.5mm}] {}}
		node [left] {$\scriptstyle V(3)$}}
	child{}
	child{node [fill,circle, inner sep=0,outer sep =0, minimum size = 1.5mm] {}
		child{edge from parent [decorate,decoration={snake,amplitude=.4mm,segment length=2mm,post length=.5mm,pre length=1.5mm}] {}}
		child{}				
		node [left] {$\scriptstyle V(2)$}}
	child{edge from parent [decorate,decoration={snake,amplitude=.4mm,segment length=2mm,post length=.5mm,pre length=1.5mm}] {}}
	node [below right] {$\scriptstyle V(4)$}};
\draw (1.5cm,.5cm) node {;};
\draw (2.7cm,0) node {} [grow'=up]
child{node [fill,circle, inner sep=0,outer sep =0, minimum size = 1.5mm] {} 
		child{{} node [above] {$\scriptstyle A$}}
		node [left] {$\scriptstyle V(1)$}};
\draw (3.5cm,.5cm) node {,};
\draw (4cm,0) node {} [grow'=up]
	child{node [fill,circle, inner sep=0,outer sep =0, minimum size = 1.5mm] {} node [above] {$\scriptstyle V(0)$}};	
\end{tikzpicture}
\end{array}
\right)
\]
\end{proof}

The following proposition can be similarly checked, using the explicit construction of the following push-out in $\operad{\C V}$ \cite[\S5]{htnso}, where the top map is a free operad map, rather than the structure of free operads,
\begin{equation}\label{pop}
\xymatrix{
	\mathcal F(U)\ar[r]^-{\mathcal F(f)}\ar[d]_g\ar@{}[rd]|{\text{push}}&\mathcal F(V)\ar[d]^{g'}\\
	\mathcal O\ar[r]_-{f'}&\mathcal P
}
\end{equation}
A $\mathcal P$-algebra $A$ is the same as an $\mathcal O$-algebra equipped with structure maps $z(V(n))\otimes A^{\otimes n}\r A$, that we regard as corolla contractions \eqref{newcc} additional to \eqref{treeops}, fitting in commutive squares, $n\geq 0$,
\begin{equation}\label{OPA}
\xymatrix@C=40pt{U(n)\otimes A^{\otimes n}\ar[r]^-{f(n)\otimes\id{}}\ar[d]_{\bar g(n)\otimes\id{}}&V(n)\otimes A^{\otimes n}\ar[d]\\
	\mathcal O(n)\otimes A^{\otimes n}\ar[r]&A
}
\end{equation}
Here $\bar g\colon U\r\mathcal O$ in $\C V^{\mathbb N}$ is the adjoint of $g$ in \eqref{pop}. 

Any map of operads $\phi\colon\mathcal O\r\mathcal P$ and $\mathcal P$-algebra $A$ give rise to a natural map of functor-operads $\phi_A\colon\mathcal O_A\r\mathcal P_A$ induced by the map of diagrams $\phi_A^i\colon\mathcal O^i_A\r\mathcal P^i_A$ defined by $\phi$ on inner vertices, $i=0,1$.

\begin{proposition}\label{horror}
	Given an operad push-out \eqref{pop} and a $\mathcal P$-algebra $A$, $f'_A\colon\mathcal O_A\r\mathcal P_A$ is the transfinite composition of a sequence 
	\[\mathcal O_{A}=\mathcal P_{A,0}\st{\Phi_{1}}\To \mathcal P_{A,1}\r\cdots\r\mathcal P_{A,t-1}\st{\Phi_{t}}\To \mathcal P_{A,t}\r\cdots\]
	in $\C C^{\C C^{(\mathbb N)}}$ such that $\Phi_{t}$, $t\geq 1$, is given by the push-out square
	$$\xymatrix@C=40pt{
		\bullet\ar[r]^{\tilde\Phi_{t}}\ar[d]_{\Psi_t}\ar@{}[rd]|{\text{push}}&\bullet\ar[d]^{\bar\Psi_t}\\
		\mathcal P_{A,t-1}\ar[r]_-{\Phi_{t}}&\mathcal P_{A,t}
	}$$
	where $\tilde\Phi_{t}(n)$ is the coproduct of all trees with $n$ leaves in even level, all of them snaky, no leaves in odd level, $t$ even inner vertices, labelled with $f$, odd inner vertices labelled with $\mathcal O_A$, and such that $T$ does not contain 
	\begin{equation}\label{corolla2}
	\begin{array}{c}
	\begin{tikzpicture}[level distance=5mm, sibling distance=5mm] 
	\draw (-4,0) node {} [grow'=up]
	child {node [fill,circle, inner sep=0,outer sep =0, minimum size = 1.5mm] (L) {} 
		child{[fill] circle (2pt) {} node [above] (A) {$\scriptstyle A$}}
		child{node [above] {$\cdots$} edge from parent[draw=none]}
		child{[fill] circle (2pt) {} node [above] (O) {$\scriptstyle A$}}
		node [below left] {$\scriptstyle f(n)$}};   
	\draw [decoration={brace}, decorate] ($(A)+(-.08,.15)$) -- node [above] {\scriptsize $n$ leaves} ($(O)+(.08,.15)$) ;
	\end{tikzpicture}
	\end{array}
	\end{equation}
	where $n\geq0$, e.g.~$(t=2, n=4)$
	\[
	\begin{tikzpicture}[level distance=5mm, sibling distance=5mm] 
	\tikzstyle{level 2}=[sibling distance=10mm] 
	\tikzstyle{level 3}=[sibling distance=5mm] 
	\tikzstyle{level 4}=[sibling distance=5mm] 
	\node {} [grow'=up]
	child{[fill] circle (2pt)
		child{edge from parent [decorate,decoration={snake,amplitude=.4mm,segment length=2mm,post length=.5mm,pre length=1.5mm}] {}}
		child{[fill] circle (2pt)
			child{[fill] circle (2pt)
				child{[fill] circle (2pt)
					child{[fill] circle (2pt) node [above] {$\scriptstyle A$}}
					child{[fill] circle (2pt)
						child{edge from parent [decorate,decoration={snake,amplitude=.4mm,segment length=2mm,post length=.5mm,pre length=1.5mm}] {}}
						node [right] {$\scriptstyle \mathcal O_A(1)$}}
					node [left] {$\scriptstyle f(2)$}}
				node [left] {$\scriptstyle \mathcal O_A(1)$}}
			child{[fill] circle (2pt) node [above] {$\scriptstyle A$}}
			child{[fill] circle (2pt)
				child{edge from parent [decorate,decoration={snake,amplitude=.4mm,segment length=2mm,post length=.5mm,pre length=1.5mm}] {}}
				child{edge from parent [decorate,decoration={snake,amplitude=.4mm,segment length=2mm,post length=.5mm,pre length=1.5mm}] {}}
				node [below right] {$\scriptstyle \mathcal O_A(2)$}}
			node [below right] (U) {$\scriptstyle f(3)$}}
		node [below right] {$\scriptstyle \mathcal O_A(2)$}};
	\end{tikzpicture}
	\]
	The attaching map $\Psi_t$ is defined as the composition of three maps from the same trees, where now one even inner vertex is labelled with $U$ and the rest with $V$. The first map is induced by the composition of 
	$\bar g\colon U\r\mathcal O$ and the natural map $\mathcal O\r\mathcal O_A$, the second map is defined by inner edge contraction, and the third map is the restriction of $\bar{\Psi}_{t-1}$ if $t>1$ or the identity if $t=1$, e.g.
	\[
	\begin{tikzpicture}[level distance=5mm, sibling distance=5mm] 
	\tikzstyle{level 2}=[sibling distance=10mm] 
	\tikzstyle{level 3}=[sibling distance=5mm] 
	\tikzstyle{level 4}=[sibling distance=5mm] 
	\node {} [grow'=up]
	child{[fill] circle (2pt)
		child{  edge from parent [decorate,decoration={snake,amplitude=.4mm,segment length=2mm,post length=.5mm,pre length=1.5mm}] {}}
		child{[fill] circle (2pt)
			child{[fill] circle (2pt)
				child{[fill] circle (2pt)
					child{[fill] circle (2pt) node [above] {$\scriptstyle A$}}
					child{[fill] circle (2pt)
						child{  edge from parent [decorate,decoration={snake,amplitude=.4mm,segment length=2mm,post length=.5mm,pre length=1.5mm}] {}}
						node [right] {$\scriptstyle \mathcal O_A(1)$}}
					node [left] {$\scriptstyle V(2)$}}
				node [left] {$\scriptstyle \mathcal O_A(1)$}}
			child{[fill] circle (2pt) node [above] {$\scriptstyle A$}}
			child{[fill] circle (2pt)
				child{  edge from parent [decorate,decoration={snake,amplitude=.4mm,segment length=2mm,post length=.5mm,pre length=1.5mm}] {}}
				child{  edge from parent [decorate,decoration={snake,amplitude=.4mm,segment length=2mm,post length=.5mm,pre length=1.5mm}] {}}
				node [below right] {$\scriptstyle \mathcal O_A(2)$}}
			node [below right] (U) {$\scriptstyle U(3)$}}
		node [below right] {$\scriptstyle \mathcal O_A(2)$}};
	\draw (3.5,0) node {} [grow'=up]
	child{[fill] circle (2pt)
		child{  edge from parent [decorate,decoration={snake,amplitude=.4mm,segment length=2mm,post length=.5mm,pre length=1.5mm}] {}}
		child{[fill] circle (2pt)
			child{[fill] circle (2pt)
				child{[fill] circle (2pt)
					child{[fill] circle (2pt) node [above] {$\scriptstyle A$}}
					child{[fill] circle (2pt)
						child{  edge from parent [decorate,decoration={snake,amplitude=.4mm,segment length=2mm,post length=.5mm,pre length=1.5mm}] {}}
						node [right] {$\scriptstyle \mathcal O_A(1)$}}
					node [left] {$\scriptstyle V(2)$}}
				node [left] (O1) {$\scriptstyle \mathcal O_A(1)$}}
			child{[fill] circle (2pt) node [above] (A) {$\scriptstyle A$}}
			child{[fill] circle (2pt)
				child{  edge from parent [decorate,decoration={snake,amplitude=.4mm,segment length=2mm,post length=.5mm,pre length=1.5mm}] {}}
				child{  edge from parent [decorate,decoration={snake,amplitude=.4mm,segment length=2mm,post length=.5mm,pre length=1.5mm}] {}}
				node [below right] (O2) {$\scriptstyle \mathcal O_A(2)$}}
			node [below right] (U2) {$\scriptstyle \mathcal O_A(3)$}}
		node [below right] (root) {$\scriptstyle \mathcal O_A(2)$}};
	\node[rectangle, rounded corners, fit= (root) (O1) (A) (O2), draw, opacity=.5, very thick, inner sep=-2] (E) {};
	\path[->] (U) edge [opacity=.5, very thick, bend right = 10] (U2);
	\tikzstyle{level 2}=[sibling distance=9mm] 
		\tikzstyle{level 2}=[sibling distance=8mm] 
	\draw (8,0) node {} [grow'=up]
	child{[fill] circle (2pt)
		child{  edge from parent [decorate,decoration={snake,amplitude=.4mm,segment length=2mm,post length=.5mm,pre length=1.5mm}] {}}
		child{[fill] circle (2pt)
			child{[fill] circle (2pt) node [above] {$\scriptstyle A$}}
			child{[fill] circle (2pt)
				child{  edge from parent [decorate,decoration={snake,amplitude=.4mm,segment length=2mm,post length=.5mm,pre length=1.5mm}] {}}
				node [right] {$\scriptstyle \mathcal O_A(1)$}}
			node [right] {$\scriptstyle V(2)$}}
		child{  edge from parent [decorate,decoration={snake,amplitude=.4mm,segment length=2mm,post length=.5mm,pre length=1.5mm}] {}}
		child{  edge from parent [decorate,decoration={snake,amplitude=.4mm,segment length=2mm,post length=.5mm,pre length=1.5mm}] {}}
		node [below left] (root2) {$\scriptstyle \mathcal O_A(4)$}};
	\path[->] (E) edge [opacity=.5, very thick, bend left = 10] (root2);
	\draw (11,.5) node (P) {$\mathcal P_{A,1}(4)$};
	\path[->] (9,.5) edge [opacity=.5, very thick] node [above, opacity=1] {\scriptsize $\bar{\Psi}_1$} (P);
	\end{tikzpicture}
	\]
\end{proposition}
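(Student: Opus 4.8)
\emph{Proof plan.} The plan is to transcribe the proof of Proposition~\ref{free}, replacing the structure of free operads by the explicit construction of the operad push-out~\eqref{pop} in \cite[\S5]{htnso}. According to that construction, $\mathcal P(m)$ is a coproduct of trees with $m$ leaves whose inner vertices are labelled with $\mathcal O$ or $V$, modulo the relation that a $U$-corolla may be resolved either through $\bar g\colon U\r\mathcal O$ or through $f\colon U\r V$ (the operad-level counterpart of~\eqref{OPA}). Substituting this into the definitions of \S\ref{s2}, $\mathcal P^0_A(n)$ and $\mathcal P^1_A(n)$ become coproducts of trees with $n$ snaky leaves, inner vertices labelled with $\mathcal O$ or $V$, straight leaves labelled with $A$, subject to the same $U$-relation and to the level restrictions of \S\ref{s2}.

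The first step is to compute the reflexive coequalizer $\mathcal P^1_A\rightrightarrows\mathcal P^0_A\r\mathcal P_A$ combinatorially, exactly as in the proof of Proposition~\ref{free}: the two parallel maps are corolla contraction --- the $\mathcal O$-algebra structure on $\mathcal O$-vertices, and~\eqref{OPA} on $V$-vertices --- and inner edge contraction, which merges adjacent $\mathcal O$-vertices via the composition of $\mathcal O$, while edge subdivision is the common section, so the coequalizer is split. As in Proposition~\ref{free}, this exhibits $\mathcal P_A(n)$ as the coproduct of \emph{reduced} trees: those in which every maximal connected cluster of $\mathcal O$-vertices together with its straight $A$-leaves has been collapsed to a single $\mathcal O_A$-vertex (this is passing to the coequalizer defining $\mathcal O_A$), no $V$-vertex has all of its inputs equal to $A$ (such a vertex is absorbed into the adjacent $\mathcal O_A$-vertex via~\eqref{OPA}), and no leaf sits immediately above a $V$-vertex (subdivide, inserting $\mathcal O_A(1)=\id{\C C}$). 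These reduced trees are exactly the alternating trees of the statement: $\mathcal O_A$-vertices in odd levels, $V$-vertices in even levels (the ``$f$-vertices''), snaky leaves in even levels, no occurrence of~\eqref{corolla2}. The backward maps of the split coequalizer are the evident inclusions of coproduct summands and the ``unreduction'' map that prunes a reduced tree into its maximal $\mathcal O$- and $V$-subtrees; this keeps the identification concrete and stable under the computations below.

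The second step is to filter the reduced trees by the number $t$ of their $V$-vertices, letting $\mathcal P_{A,t}$ be spanned by those with at most $t$ of them. Then $\mathcal P_{A,0}=\mathcal O_A$ (with no $V$-vertex a reduced tree is a single $\mathcal O_A$-vertex), $f'_A$ is the inclusion $\mathcal P_{A,0}\hookrightarrow\mathcal P_A$, and $\mathcal P_A=\colim_t\mathcal P_{A,t}$ is the claimed transfinite composition; it remains to check that $\Phi_t\colon\mathcal P_{A,t-1}\r\mathcal P_{A,t}$, which adjoins the reduced trees with exactly $t$ $V$-vertices, fits in the displayed push-out square. Fix an alternating tree shape with $t$ such vertices; decorating them with the morphism $f\colon U\r V$ turns it into a functor $\dos^t\r\C C^{\C C^n}$ (one $\dos$ per $V$-vertex), and in the paper's convention $\tilde\Phi_t(n)$ on that summand is the latching map of this functor at $(1,\dots,1)\in\dos^t$: its target is the all-$V$ tree, its source is glued along lower faces from the faces having a single distinguished vertex replaced by $U$. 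On such a face, $\Psi_t$ is the composite of the map induced at that vertex by $\bar g\colon U\r\mathcal O$ followed by $\mathcal O\r\mathcal O_A$, then the inner edge contraction merging the resulting $\mathcal O$-vertex into its neighbouring $\mathcal O_A$-vertices (this lowers the number of $V$-vertices to $t-1$ and, if its inputs were all $A$, absorbs it), then the restriction of $\bar\Psi_{t-1}$. Pushing out along $\tilde\Phi_t(n)$ therefore adjoins the all-$V$ trees and identifies their latching data with $\mathcal P_{A,t-1}$, which is exactly $\Phi_t$.

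The last step is to verify that the functor-operad structure of $\mathcal P_A$ restricts to the one described in the statement: composition is grafting followed by re-reduction (contract the new inner edge, merge $\mathcal O_A$-vertices or absorb a now-all-$A$ $V$-vertex, and iterate), the unit is the single snaky leaf, and these operations respect the filtration by $V$-vertices and the map $f'_A$ --- all a direct transcription of Proposition~\ref{free} and of the functor-operad structure of \S\ref{s2}. The main obstacle is the combinatorial bookkeeping of reduced trees: one must check that reduction gives a genuinely \emph{well-defined} coproduct decomposition of $\mathcal P_A$ indexed by alternating trees (here the split-coequalizer backward maps and the $U$-relation interact and must be handled carefully), and that under this decomposition the latching object of each new cell meets $\mathcal P_{A,t-1}$ along precisely $\Psi_t$, with no spurious identifications. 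Everything else is the evident analogue of the corresponding step for Proposition~\ref{free}.
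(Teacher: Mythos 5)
Your overall strategy is the one the paper intends: it gives no detailed proof of Proposition \ref{horror}, saying only that it ``can be similarly checked'' to Proposition \ref{free} by using the explicit construction of the operad push-out \eqref{pop} from \cite[\S5]{htnso} in place of the structure of free operads. Your filtration by the number of $f$-labelled even vertices, your reading of $\tilde\Phi_t(n)$ as a coproduct of latching maps at $(1,\dots,1)\in\dos^t$, and your description of the attaching maps $\Psi_t$ and of the induced functor-operad structure all agree with the statement and with the paper's remarks following it.

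There is, however, a genuine gap at the pivot of your first step: the claim that the reflexive coequalizer is split and ``exhibits $\mathcal P_A(n)$ as the coproduct of reduced trees'' with $V$-labelled even vertices is false, because $\mathcal P$ is not a free operad. The $U$-relation you mention does not evaporate after reduction; it survives as genuine identifications among your would-be coproduct summands. Already for the initial $\mathcal P$-algebra one has $\mathcal P_A(n)(X_1,\dots,X_n)=z(\mathcal P(n))\otimes\bigotimes_{i}X_i$ by Proposition \ref{envinitial}, and $\mathcal P(n)$ itself is not a coproduct of $V$-labelled trees: take for instance $\mathcal O$ the initial operad and $U,V$ concentrated in arity $1$; then $\mathcal P(1)$ is the quotient of the tensor algebra on $V$ by the two-sided ideal forcing $f(u)=\bar g(u)$, which can collapse drastically (it can even vanish), whereas the coproduct of reduced $V$-labelled trees would be the full tensor algebra. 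Consequently your second step cannot \emph{define} $\mathcal P_{A,t}$ as the span of the reduced trees with at most $t$ $V$-vertices inside a coproduct, nor can the backward ``unreduction'' maps be sections of a projection onto a summand. The filtration has to be constructed as a transfinite composition of push-outs from the outset, by feeding the stagewise (cellular) description of $\mathcal P$ from \cite[\S5]{htnso} into the coequalizer $\mathcal P_A^1\rightrightarrows\mathcal P_A^0$ and interchanging colimits (coequalizers against push-outs and against transfinite compositions), exactly as is carried out in detail in the proof of Proposition \ref{transAcof}; that interchange is the real content of the proof. The ``main obstacle'' you flag---a well-defined coproduct decomposition of $\mathcal P_A$ indexed by alternating trees---is not an obstacle to be overcome but a statement that is simply not true: the all-$V$ trees are glued onto $\mathcal P_{A,t-1}$ along their latching objects, not adjoined as new summands.
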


The only remarkable cosmetic difference between Propositions \ref{free} and \ref{horror} is that, in the former, we allow straight leaves but not corks (corks are corollas \eqref{newcc} for $n=0$) and, in the latter, we allow corks in odd levels (not in even  levels, since \eqref{corolla2} is an even  cork for $n=0$) but not straight leaves at all. This is just to simplify notation. We do not describe the functor-operad structure of $\mathcal P_{A}$ in Proposition \ref{horror}. It is given by grafting, then contracting the new inner edge and, if they appear, contracting corollas \eqref{corolla2}
by using the $\mathcal P$-algebra structure maps $z(V(n))\otimes A^{\otimes n}\r A$. To the best of our knowledge, Proposition \ref{horror} is new even in the symmetric case. In that case, enveloping functor-operads can be replaced with honest enveloping operads in the obvious way.

An $\mathcal O$-algebra map $\phi\colon A\r B$ induces a 
functor-operad map $\mathcal O_{\phi}\colon \mathcal O_{A}\r \mathcal O_{B}$ 
which is the coequalizer of maps $\mathcal O_{\phi}^i\colon \mathcal O_{A}^i\r \mathcal O_{B}^i$, $i=0,1$, induced by $\phi$ on straight leaves.

\begin{proposition}\label{transAcof}
If we have an $\mathcal O$-algebra push-out  \eqref{algebrapo}, $\mathcal O_{f'}$ is the transfinite composition of a sequence 
\[\mathcal O_{A}=\mathcal O_{B,0}\st{\Phi_{1}}\To \mathcal O_{B,1}\r\cdots\r\mathcal O_{B,t-1}\st{\Phi_{t}}\To \mathcal O_{B,t}\r\cdots\]
in $\C C^{\C C^{(\mathbb N)}}$ such that $\Phi_{t}$, $t\geq 1$, is given by the push-out square
	$$\xymatrix@C=40pt{
		\bullet\ar[r]^{\tilde\Phi_{t}}\ar[d]_{\Psi_t}\ar@{}[rd]|{\text{push}}&\bullet\ar[d]^{\bar\Psi_t}\\
		\mathcal O_{B,t-1}\ar[r]_-{\Phi_{t}}&\mathcal O_{B,t}
		}$$
where $\tilde\Phi_{t}(n)$ is the coproduct of all corollas with $t+n$ leaves, $t$ bumpy and $n$ snaky, inner vertex labelled with $\mathcal O_{A}$, and bumpy leaves with $f$, e.g.~$(t=3, n=2)$
\[
\begin{tikzpicture}[level distance=5mm, sibling distance=4mm] 
\node {} [grow'=up]
child {[fill] circle (2pt) 
  child {{} node [above] {$\scriptstyle f$} edge from parent [decorate,decoration={bumps,amplitude=2,segment length=2mm,post length=.5mm,pre length=1.5mm}] {}}
  child {{} 
  edge from parent [decorate,decoration={snake,amplitude=.4mm,segment length=2mm,post length=.5mm,pre length=1.5mm}] {}}
  child {{} node [above] {$\scriptstyle f$} edge from parent [decorate,decoration={bumps,amplitude=2,segment length=2mm,post length=.5mm,pre length=1.5mm}] {}}
  child {{} node [above] {$\scriptstyle f$} edge from parent [decorate,decoration={bumps,amplitude=2,segment length=2mm,post length=.5mm,pre length=1.5mm}] {}}
  child {{} 
  edge from parent [decorate,decoration={snake,amplitude=.4mm,segment length=2mm,post length=.5mm,pre length=1.5mm}] {}}
node [below left] {$\scriptstyle \mathcal O_A(5)$}
};
\end{tikzpicture}
\]
The attaching map $\Psi_{t}$ is defined as the composition of three maps  from the same corollas, where  now one bumpy leaf is labelled with $Y$ and the rest with $Z$. The first map is given by $\bar g \colon Y\r A=\mathcal O_{A}(0)$ (which straightens and corks one bumpy leaf), the second map is defined by inner edge contraction, and the third map is the restriction of $\bar\Psi_{t-1}$ if $t>1$ and the identity if $t=1$, e.g.
\[
\begin{tikzpicture}[level distance=5mm, sibling distance=4mm] 
\node {} [grow'=up]
child {[fill] circle (2pt) 
  child {{} node [above] {$\scriptstyle Z$} edge from parent [decorate,decoration={bumps,amplitude=2,segment length=2mm,post length=.5mm,pre length=1.5mm}] {}}
  child {{} 
edge from parent [decorate,decoration={snake,amplitude=.4mm,segment length=2mm,post length=.5mm,pre length=1.5mm}] {}}
  child {{} node [above] (Y) {$\scriptstyle Y$} edge from parent [decorate,decoration={bumps,amplitude=2,segment length=2mm,post length=.5mm,pre length=1.5mm}] {}}
  child {{} node [above] {$\scriptstyle Z$} edge from parent [decorate,decoration={bumps,amplitude=2,segment length=2mm,post length=.5mm,pre length=1.5mm}] {}}
  child {{} 
edge from parent [decorate,decoration={snake,amplitude=.4mm,segment length=2mm,post length=.5mm,pre length=1.5mm}] {}}
node [below left] {$\scriptstyle \mathcal O_A(5)$}
};
\draw (3,0) node {} [grow'=up]
child {[fill] circle (2pt) 
  child {{} node [above] {$\scriptstyle Z$} edge from parent [decorate,decoration={bumps,amplitude=2,segment length=2mm,post length=.5mm,pre length=1.5mm}] {}}
  child {{} 
edge from parent [decorate,decoration={snake,amplitude=.4mm,segment length=2mm,post length=.5mm,pre length=1.5mm}] {}}
  child {[fill] circle (2pt)  {} node [above] (A) {$\scriptstyle A$}}
  child {{} node [above] {$\scriptstyle Z$} edge from parent [decorate,decoration={bumps,amplitude=2,segment length=2mm,post length=.5mm,pre length=1.5mm}] {}}
  child {{} 
edge from parent [decorate,decoration={snake,amplitude=.4mm,segment length=2mm,post length=.5mm,pre length=1.5mm}] {}}
node [below left] (OA5) {$\scriptstyle \mathcal O_A(5)$}
};
\draw (6,0) node {} [grow'=up]
child {[fill] circle (2pt) 
  child {{} node [above] {$\scriptstyle Z$} edge from parent [decorate,decoration={bumps,amplitude=2,segment length=2mm,post length=.5mm,pre length=1.5mm}] {}}
  child {{} 
edge from parent [decorate,decoration={snake,amplitude=.4mm,segment length=2mm,post length=.5mm,pre length=1.5mm}] {}}
  child {{} node [above] {$\scriptstyle Z$} edge from parent [decorate,decoration={bumps,amplitude=2,segment length=2mm,post length=.5mm,pre length=1.5mm}] {}}
  child {{} 
edge from parent [decorate,decoration={snake,amplitude=.4mm,segment length=2mm,post length=.5mm,pre length=1.5mm}] {}}
node [below left] (OA4) {$\scriptstyle \mathcal O_A(4)$}
};
\draw (8,.5) node (OB32) {$\mathcal O_{B,2}(2)$};
\node[circle, fit=(Y), draw, opacity=.5, very thick, inner sep=-2] (EY) {};
\path[opacity=.5, very thick, bend left = 20] (EY.north east) edge [->] node [above, opacity=1, text width=1.9cm, text centered] {$\scriptstyle \bar g$} (A);
\node[circle, fill, fit=(A), draw, opacity=.15, very thin, inner sep=-2.5] (EA) {};
\node[circle, fill, fit=(OA5), draw, opacity=.15, very thin, inner sep=-4] (EOA5) {};
\path[-] (EA) edge [opacity=.15, ultra thick] (EOA5);
\path[->] (EOA5.south east) edge [opacity=.15, ultra thick, bend right =10] node [above, opacity=1] {$\scriptstyle \circ_{3}$} (OA4);
\path[->] (6.3,.5) edge [opacity=.5, very thick] node [below, opacity=1] {$\scriptstyle \bar\Psi_2(2)$} (OB32);
\end{tikzpicture}
\]
\end{proposition}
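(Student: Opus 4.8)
The plan is to realise $\mathcal O_{f'}\colon\mathcal O_A\to\mathcal O_B$ as a push-out of functor-operads along a free map, and then to compute that push-out by the same tree bookkeeping used for operad push-outs along free maps in \cite[\S5]{htnso} (the tool behind Proposition~\ref{horror}), which, because the generators to be adjoined live in arity $0$, collapses to exactly the filtration of Lemma~\ref{pind2} performed ``with $n$ extra dangling snaky leaves''.

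First I would identify $\mathcal O_B$. Recall from the construction of enveloping functor-operads that $\mathcal O_A$ is canonically a functor-operad under $\mathcal O$ and that, for every functor-operad $\mathcal Q$ under $\mathcal O$, restriction to arity $0$ (using $\mathcal O_A(0)=A$) identifies the functor-operad maps $\mathcal O_A\to\mathcal Q$ over $\mathcal O$ with the $\mathcal O$-algebra maps $A\to\mathcal Q(0)$, where $\mathcal Q(0)$ is the initial $\mathcal Q$-algebra, hence an $\mathcal O$-algebra; the inverse sends $h\colon A\to\mathcal Q(0)$ to the map built on the tree-coproduct $\mathcal O_A^0$ by turning each straight $A$-leaf into the $\mathcal Q(0)$-cork determined by $h$ and then contracting, which is checked to factor through the coequalizer \eqref{rcoeq}. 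Combining this with $\hom_{\algebra{\mathcal O}{\C C}}(\mathcal F_{\mathcal O}(W),\mathcal Q(0))=\hom_{\C C}(W,\mathcal Q(0))$ and applying $\hom_{\algebra{\mathcal O}{\C C}}(-,\mathcal Q(0))$ to the push-out \eqref{algebrapo}, Yoneda gives $\mathcal O_B=\mathcal O_A\amalg_{\mathcal F(\widehat Y)}\mathcal F(\widehat Z)$ as functor-operads, where $\widehat Y,\widehat Z$ are $Y,Z$ regarded as free generators in arity $0$, the map $\mathcal F(\widehat Y)\to\mathcal F(\widehat Z)$ is induced by $f$, the map $\mathcal F(\widehat Y)\to\mathcal O_A$ is adjoint to $\bar g\colon Y\to A=\mathcal O_A(0)$, and the canonical map $\mathcal O_A\to\mathcal O_B$ out of the push-out is $\mathcal O_{f'}$.

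Next I would compute this push-out explicitly, running the description of functor-operad push-outs along free maps from \cite[\S5]{htnso} with the functor-operad $\mathcal O_A$ as base. Filter $\mathcal O_B$ in $\C C^{\C C^{(\mathbb N)}}$ by the number $t$ of $\widehat Z$-corks occurring in a tree, so that $\mathcal O_{B,0}=\mathcal O_A$. A summand of $\mathcal O_B(n)$ indexed by a tree with exactly $t$ such corks is built out of $\mathcal O_A$-vertices and those corks; contracting every inner edge whose endpoints are both labelled with $\mathcal O_A$ — which in particular absorbs the arity-$0$ $\mathcal O_A$-vertices, as these represent $\mathcal O_A(0)=A$ — brings it to the normal form of a single $\mathcal O_A$-corolla of arity $t+n$ whose inputs are the $t$ $\widehat Z$-corks and the $n$ snaky leaves, in some planar order, and this reduction is a split coequalizer exactly as in Propositions~\ref{envinitial}, \ref{uass} and \ref{free}. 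Hence $\mathcal O_{B,t}$ arises from $\mathcal O_{B,t-1}$ by attaching the coproduct over planar shuffles of $\mathcal O_A$-corollas with $t$ bumpy leaves (labelled $f$) and $n$ snaky leaves, glued along the relation coming from $\mathcal F(\widehat Y)\to\mathcal F(\widehat Z)$: replacing the $i$-th bumpy leaf by the image of $f$ is identified, via $\bar g$, with straightening and corking that leaf by $A=\mathcal O_A(0)$ and contracting the new inner edge, which lands in $\mathcal O_{B,t-1}$ through the restriction of $\bar\Psi_{t-1}$. This is precisely the push-out square and the descriptions of $\tilde\Phi_t$ and $\Psi_t$ in the statement. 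Finally, each tree contains only finitely many $\widehat Z$-corks and $\otimes$ and coequalizers commute with sequential colimits, so $\mathcal O_B=\colim_t\mathcal O_{B,t}$, which is the claimed transfinite composition and carries $\mathcal O_{f'}$.

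The hard part is the combinatorial bookkeeping of the last paragraph: pinning down the indexing by planar $(t,n)$-shuffles and its interaction with the nullary part of $\mathcal O_A$, proving the normal-form (split-coequalizer) statement so that the push-out presentation neither over- nor under-identifies trees, and checking that the two legs of the reflexive pair presenting the $t$-th filtration step induce exactly $\Psi_t$ and the evident inclusion — equivalently, that the $\mathcal O_A$-composition relations built into \eqref{rcoeq} for $\mathcal O_B$ are compatible with this filtration.
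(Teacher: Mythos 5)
Your architecture is genuinely different from the paper's. The paper never leaves the concrete coequalizer presentations: it starts from the filtration $A=B_0\to B_1\to\cdots$ of Lemma \ref{pind2}, feeds it into the presentation $\mathcal O_{B,t}^1\rightrightarrows\mathcal O_{B,t}^0$ of $\mathcal O_{B,t}$, exhibits each $\Phi_t^i$ as a push-out of a map $\bar\Phi_t^i$ which is itself a coequalizer, and then interchanges horizontal and vertical coequalizers in the square \eqref{cuadrado}, computing the vertical ones by explicit split coequalizers to identify $\bar\Phi_t$ with $\tilde\Phi_t$. You instead propose to characterize $\mathcal O_{(-)}$ by a universal property (left adjoint to evaluation at arity $0$ on functor-operads under $\mathcal O$), deduce $\mathcal O_B\cong\mathcal O_A\amalg_{\mathcal F(\widehat Y)}\mathcal F(\widehat Z)$, and compute that push-out by the tree filtration of \cite[\S5]{htnso}. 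If carried out, this is arguably more conceptual and would reprove Lemma \ref{pind2} as the arity-$0$ special case; but it front-loads infrastructure the paper does not have.

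The gaps are real and concentrated in exactly the places you defer. First, the adjunction $\hom_{\mathrm{FunOp}/\mathcal O}(\mathcal O_A,\mathcal Q)\cong\hom_{\algebra{\mathcal O}{\C C}}(A,\mathcal Q(0))$ is asserted, not proved: $\mathcal O_A$ is defined as a coequalizer in $\C C^{\C C^{(\mathbb N)}}$, not in functor-operads, so you must check that the "cork each straight leaf by $h$ and contract" map descends through \eqref{rcoeq}, is a map of functor-operads, and is inverse to evaluation at $0$; this verification is of essentially the same combinatorial weight as the paper's split-coequalizer manipulations. Second, \cite[\S5]{htnso} computes push-outs of \emph{operads} along free maps; you are invoking its analogue for \emph{functor-operads} along a free map on arity-$0$ generators, which is a statement of the same kind as Proposition \ref{horror} and cannot merely be cited — it is, in effect, the proposition you are trying to prove. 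Third, the normal-form claim (that contracting $\mathcal O_A$--$\mathcal O_A$ inner edges gives a split coequalizer onto single $\mathcal O_A$-corollas with $t$ corks and $n$ snaky leaves, with no over- or under-identification, and that the two legs of the reflexive pair induce precisely $\Psi_t$ and the inclusion) is the actual content of the proposition, and you explicitly leave it as "the hard part". So the plan is coherent and the filtration you describe matches the statement, but as written the proof is not complete: each of these three points must be supplied before the argument closes.
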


\begin{proof}
By \cite[\S8]{htnso} (corrected version), we can define $\mathcal O_{B,t}$ as the coequalizer of the maps $\mathcal O_{B,t}^1\rightrightarrows \mathcal O_{B,t}^0$ defined as follows. The functor $\mathcal O_{B,t}^i(n)$ is a coproduct 
with the same indexing set as $\mathcal O_{A}^i(n)$. The factor corresponding to a tree with $n$ snaky leaves is the colimit of the functors obtained by labeling the inner vertices with $\mathcal O$ and the straight leaves with $B_{s_1},\dots, B_{s_r}$, $s_{1}+\cdots+s_{r}\leq t$, e.g.~$(i=0, n=2, r=3)$
$$\begin{tikzpicture}[level distance=5mm, sibling distance=5mm] 
\node {} [grow'=up]
child {[fill] circle (2pt) 
  child {{} node [above] {$\scriptstyle B_{s_1}$}}
  child {{} 
  edge from parent [decorate,decoration={snake,amplitude=.4mm,segment length=2mm,post length=.5mm,pre length=1.5mm}] {}}
  child {{} node [above] {$\scriptstyle B_{s_2}$}}
  child {{} node [above] {$\scriptstyle B_{s_3}$}}
  child {{} 
  edge from parent [decorate,decoration={snake,amplitude=.4mm,segment length=2mm,post length=.5mm,pre length=1.5mm}] {}}
node [below left] {$\scriptstyle \mathcal O(5)$}
};
\draw (-2,.7) node {$\colim_{\sum_{i=1}^3s_i\leq t}$};
\end{tikzpicture}$$
The maps $\mathcal O_{B,t}^1\rightrightarrows \mathcal O_{B,t}^0$ are respectively given by inner edge contraction, as above, and by the following new kind of corolla contraction, 
$$\begin{tikzpicture}[level distance=5mm, sibling distance=5mm] 
\node {} [grow'=up]
child {node [fill,circle, inner sep=0,outer sep =0, minimum size = 1.5mm] (L) {} 
  child{node [above] (A) {$\scriptstyle B_{s_1}$}}
  child{node [above] {$\cdots$} edge from parent[draw=none]}
  child{node [above] (O) {$\scriptstyle B_{s_n}$}}
node [below left] {$\scriptstyle \mathcal O(n)$}};   
\draw [decoration={brace}, decorate] ($(A)+(-.2,.2)$) -- node [above] {\scriptsize $n$ leaves} ($(O)+(.2,.2)$) ;
\draw (3,0) node {} [grow'=up] child{coordinate (R) node [above] {$\scriptstyle B_{s_{1}+\cdots+s_{n}}$}};
\path[->] ($(L)+(6mm,0)$) edge  [very thick, opacity=.5] node [above, opacity=1, text width=2.4cm, text centered] {\scriptsize corolla \\[-2mm] contraction} node [below, opacity=1, text width=2.4cm, text centered] {\scriptsize \cite[Lemma 8.2]{htnso}} ($(R)+(-6mm,0)$);
\end{tikzpicture}$$
The map $\Phi_{t}$ is the coequalizer of two maps $\Phi_{t}^1\rightrightarrows \Phi_{t}^0$ in $\mor{\C C}^{\C C^{(\mathbb N)}}$, 
\begin{equation}\label{son2}
\xymatrix{
\mathcal O_{B,t-1}^1\ar[r]^-{\Phi_{t}^{1}}\ar@<-.5ex>[d]\ar@<.5ex>[d]&\mathcal O_{B,t}^1\ar@<-.5ex>[d]\ar@<.5ex>[d]\\
\mathcal O_{B,t-1}^0\ar[r]^-{\Phi_{t}^{0}}&\mathcal O_{B,t}^0
}
\end{equation}
where $\Phi_{t}^{i}$ is defined by the bonding maps in \cite[Lemma \ref{pind2}]{htnso} (corrected version).

The map $\Phi^{i}_{t}$ in $\C C^{\C C^{(\mathbb N)}}$ fits into a push-out diagram
\begin{equation}\label{fii}
	\xymatrix{
		\bullet\ar[r]^-{\bar{\Phi}_t^i}\ar[d]\ar@{}[rd]|{\text{push}}&\bullet\ar[d]\\
		\mathcal O_{B,t-1}^i\ar[r]^-{\Phi_t^i}&\mathcal O_{B,t}^i
	}
\end{equation}
where $\bar{\Phi}_t^i$ is the coequalizer of the pair $\bar \Phi_t^{i1}\rightrightarrows \bar \Phi_t^{i0}$ in $\mor{\C C}^{\C C^{(\mathbb N)}}$ defined as follows. For $i=0$,  $\bar\Phi_t^{00}(n)$ is a coproduct of trees of height $3$ with $n$ leaves in level $2$, all of them snaky, and such that all level $3$ edges are leaves, including $t$ bumpy leaves (the rest straight). Inner edges are labeled with $\mathcal O$, straight leaves with $A$, and bumpy leaves with $f$, e.g.~$(n=2,t=2)$
$$
\begin{tikzpicture}[level distance=5mm, sibling distance=5mm] 
\tikzstyle{level 1}=[sibling distance=8mm] 
\tikzstyle{level 3}=[sibling distance=5mm] 
\node {} [grow'=up]
child {[fill] circle (2pt) 
  child {{} 
  edge from parent [decorate,decoration={snake,amplitude=.4mm,segment length=2mm,post length=.5mm,pre length=1.5mm}] {}}
  child {[fill] circle (2pt) 
      child{node [above] {$\scriptstyle f$} edge from parent [decorate,decoration={bumps,amplitude=2,segment length=2mm,post length=.5mm,pre length=1.5mm}] {}}
      child{{} node [above] {$\scriptstyle A$}}
      child{node [above] {$\scriptstyle f$} edge from parent [decorate,decoration={bumps,amplitude=2,segment length=2mm,post length=.5mm,pre length=1.5mm}] {}}
    node [right] (O5) {$\scriptstyle \mathcal O(3)$}
  }
  child {{} 
  edge from parent [decorate,decoration={snake,amplitude=.4mm,segment length=2mm,post length=.5mm,pre length=1.5mm}] {}}
  child {[fill] circle (2pt) node [above] (O0) {$\scriptstyle \mathcal O(0)$}}
node [below left] {$\scriptstyle \mathcal O(4)$}
};
\end{tikzpicture}
$$
Similarly, for $i=1$, $\bar \Phi_t^{01}(n)$ is a coproduct of trees of height $\leq 4$ with $n$ level $2$ leaves (snaky), $t$ level $3$ leaves (bumpy), and such that all level $4$ edges (if any) are straight leaves. Labels are as above, and the maps   $\bar \Phi_t^{01}\rightrightarrows \bar \Phi_t^{00}$ are defined by corolla contraction and level $3$ inner edge contraction, e.g.~$(n=2,t=2)$
$$
\begin{tikzpicture}[level distance=5mm, sibling distance=5mm] 
\tikzstyle{level 1}=[sibling distance=9mm] 
\tikzstyle{level 3}=[sibling distance=3mm] 
\draw (-4.5,0) node {} [grow'=up]
child {[fill] circle (2pt) 
  child {edge from parent [decorate,decoration={snake,amplitude=.4mm,segment length=2mm,post length=.5mm,pre length=1.5mm}] {}}
  child {[fill] circle (2pt) 
      child{{} node [above] (Al) {$\scriptstyle A$}}
      child{node [above] {$\scriptstyle f$} edge from parent [decorate,decoration={bumps,amplitude=2,segment length=2mm,post length=.5mm,pre length=1.5mm}] {}}
      child{node [above] {$\scriptstyle f$} edge from parent [decorate,decoration={bumps,amplitude=2,segment length=2mm,post length=.5mm,pre length=1.5mm}] {}}
      child{{} node [above] (Ar) {$\scriptstyle A$}}
    node [right] {$\scriptstyle \mathcal O(4)$}
  }
  child {edge from parent [decorate,decoration={snake,amplitude=.4mm,segment length=2mm,post length=.5mm,pre length=1.5mm}] {}}
  child {[fill] circle (2pt) node [above] {$\scriptstyle \mathcal O(0)$}}
node [below left] {$\scriptstyle \mathcal O(4)$}
};
\tikzstyle{level 3}=[sibling distance=5mm] 
\node {} [grow'=up]
child {[fill] circle (2pt) 
  child {edge from parent [decorate,decoration={snake,amplitude=.4mm,segment length=2mm,post length=.5mm,pre length=1.5mm}] {}}
  child {[fill] circle (2pt) 
      child{[fill] circle (2pt) 
	child{{} node [above] (AA) {$\scriptstyle A$}}
	child{{} node [above] (AAA) {$\scriptstyle A$}}
      node [left] (O2) {$\scriptstyle \mathcal O(2)$}} 
      child{node [above] {$\scriptstyle f$} edge from parent [decorate,decoration={bumps,amplitude=2,segment length=2mm,post length=.5mm,pre length=1.5mm}] {}}
      child{node [above] {$\scriptstyle f$} edge from parent [decorate,decoration={bumps,amplitude=2,segment length=2mm,post length=.5mm,pre length=1.5mm}] {}}
      child {[fill] circle (2pt) node [above] (O0t) {$\scriptstyle \mathcal O(0)$}}
    node [right] (O4t) {$\scriptstyle \mathcal O(4)$}
  }
  child {edge from parent [decorate,decoration={snake,amplitude=.4mm,segment length=2mm,post length=.5mm,pre length=1.5mm}] {}}
  child {[fill] circle (2pt) node [above] (O0) {$\scriptstyle \mathcal O(0)$}}
node [below left] {$\scriptstyle \mathcal O(4)$}
};
\tikzstyle{level 3}=[sibling distance=3mm] 
\draw (4.5,0) node {} [grow'=up]
child {[fill] circle (2pt) 
  child {edge from parent [decorate,decoration={snake,amplitude=.4mm,segment length=2mm,post length=.5mm,pre length=1.5mm}] {}}
  child {[fill] circle (2pt) 
  	child{{} node [above] {$\scriptstyle A$}}
	child{{} node [above] {$\scriptstyle A$}}
        child{node [above] {$\scriptstyle f$} edge from parent [decorate,decoration={bumps,amplitude=2,segment length=2mm,post length=.5mm,pre length=1.5mm}] {}}
      child{node [above] {$\scriptstyle f$} edge from parent [decorate,decoration={bumps,amplitude=2,segment length=2mm,post length=.5mm,pre length=1.5mm}] {}}
      node [left] (O4r) {$\scriptstyle \mathcal O(4)$}
  }
  child {edge from parent [decorate,decoration={snake,amplitude=.4mm,segment length=2mm,post length=.5mm,pre length=1.5mm}] {}}
  child {[fill] circle (2pt) node [above] {$\scriptstyle \mathcal O(0)$}}
node [below left] {$\scriptstyle \mathcal O(4)$}
};
\node[circle, fill, fit=(O4t), draw, opacity=.15, very thin, inner sep=-4] (OO4t) {};
\node[circle, fill, fit=(O2), draw, opacity=.15, very thin, inner sep=-4] (OO2) {};
\node[circle, fill, fit=(O0t), draw, opacity=.15, very thin, inner sep=-4] (OO0t) {};
\path[-] (OO2) edge [opacity=.15, ultra thick] (OO4t);
\path[-] (OO0t) edge [opacity=.15, ultra thick] (OO4t);
\path[->] (OO4t.south east) edge [opacity=.15, ultra thick, bend right =10] (O4r.south);
\node[rectangle, rounded corners, fit= (O2) (AA) (AAA), draw, opacity=.5, very thick, inner xsep=1.5, inner sep=-2] (E1) {};
\path[opacity=.5, very thick, bend right = 20] (E1) edge [->] (Al.north);
\node[circle, fit= (O0t), draw, opacity=.5, very thick, inner sep=-2] (E2) {};
\path[opacity=.5, very thick, bend right = 35] (E2) edge [->] (Ar.north);
\end{tikzpicture}
$$
Analogously, $\bar\Phi^{10}_r(n)$ is the coproduct of height $4$ trees with $n$ level $2$ leaves (snaky), no level $3$ leaves, and such that all level $4$ edges are leaves ($t$ of them bumpy), $\bar\Phi^{11}_r(n)$ is the coproduct of trees of height $\leq 5$ with $n$ level $2$ leaves (snaky), no level $3$ leaves, $t$ level $4$ leaves (bumpy), and such that all level $5$ edges (if any) are straight leaves. Labels are again as above, and the maps $\bar \Phi_t^{11}\rightrightarrows \bar \Phi_t^{10}$ are defined by corolla contraction and level $4$ inner edge contraction, respectively. The attaching maps in \eqref{fii} are derived from the attaching maps in \cite[Lemma \ref{pind2}]{htnso} (corrected version).

There are parallel maps $\bar{\Phi}_t^1\rightrightarrows\bar{\Phi}_t^0$ in $\mor{\C C}^{\C C^{(\mathbb N)}}$ compatible with $\Phi_{t}^1\rightrightarrows \Phi_{t}^0$ in \eqref{son2}  via the attaching and characteristic maps in \eqref{fii},
\[
\xymatrix@!=5pt{
	&\bullet\ar[rr]^{\bar \Phi_t^1}
	\ar@<-.5ex>[dd]|-{\hole}\ar@<.5ex>[dd]|-{\hole}\ar[ld]
	&&\bullet\ar@<-.5ex>[dd]\ar@<.5ex>[dd]\ar[ld]\\
	\mathcal O_{B,t-1}^1\ar[rr]^<(.6){\Phi_{t}^{1}}\ar@<-.5ex>[dd]\ar@<.5ex>[dd]&&\mathcal O_{B,t}^1\ar@<-.5ex>[dd]\ar@<.5ex>[dd]\\
		&\bullet\ar[rr]^<(.2){\bar \Phi_t^0}|-{\hole\hole}\ar[ld]&&\bullet\ar[ld]\\
	\mathcal O_{B,t-1}^0\ar[rr]^-{\Phi_{t}^{0}}&&\mathcal O_{B,t}^0
}
\]
The maps $\bar{\Phi}_t^1\rightrightarrows\bar{\Phi}_t^0$ are obtained by taking horizontal coequalizers in the following diagram,
\begin{equation}\label{cuadrado}
\xymatrix{\bar \Phi_t^{11}\ar@<-.5ex>[r]\ar@<.5ex>[r]\ar@<-.5ex>[d]\ar@<.5ex>[d]&\bar \Phi_t^{10}\ar@<-.5ex>[d]\ar@<.5ex>[d]\\
	\bar \Phi_t^{01}\ar@<-.5ex>[r]\ar@<.5ex>[r]&\bar \Phi_t^{00}}
\end{equation}
Here, the right (resp.~left) vertical arrow in each vertical pair is defined by contraction of level $2$ (resp.~$3$) inner edges, e.g.~the vertical pair on the right,
$$\begin{tikzpicture}[level distance=5mm, sibling distance=7mm] 
\tikzstyle{level 1}=[sibling distance=8mm] 
\tikzstyle{level 3}=[sibling distance=4mm] 
\node {} [grow'=up]
child {[fill] circle (2pt) 
  child {edge from parent [decorate,decoration={snake,amplitude=.4mm,segment length=2mm,post length=.5mm,pre length=1.5mm}] {}}
  child {[fill] circle (2pt) 
      child{node [above] {$\scriptstyle f$} edge from parent [decorate,decoration={bumps,amplitude=2,segment length=2mm,post length=.5mm,pre length=1.5mm}] {}}
      child{{} node [above] {$\scriptstyle A$}}
      child{node [above] {$\scriptstyle f$} edge from parent [decorate,decoration={bumps,amplitude=2,segment length=2mm,post length=.5mm,pre length=1.5mm}] {}}
    node [right] (O5) {$\scriptstyle \mathcal O(3)$}
  }
  child {edge from parent [decorate,decoration={snake,amplitude=.4mm,segment length=2mm,post length=.5mm,pre length=1.5mm}] {}}
  child {[fill] circle (2pt) node [above] (O0) {$\scriptstyle \mathcal O(0)$}}
node [below left] {$\scriptstyle \mathcal O(4)$}
};   
\tikzstyle{level 1}=[sibling distance=8mm] 
\tikzstyle{level 3}=[sibling distance=6mm] 
\tikzstyle{level 4}=[sibling distance=5mm] 
\draw (4.5,0) node {} [grow'=up]
child {[fill] circle (2pt) 
  child {edge from parent [decorate,decoration={snake,amplitude=.4mm,segment length=2mm,post length=.5mm,pre length=1.5mm}] {}}
  child {[fill] circle (2pt) 
    child {[fill] circle (2pt) node [above] (OOO) {$\scriptstyle \mathcal O(0)$}}
    child {
      child{node [above] {$\scriptstyle f$} edge from parent [decorate,decoration={bumps,amplitude=2,segment length=2mm,post length=.5mm,pre length=1.5mm}] {}}
      child{{} node [above] {$\scriptstyle A$}}
      child{node [above] {$\scriptstyle f$} edge from parent [decorate,decoration={bumps,amplitude=2,segment length=2mm,post length=.5mm,pre length=1.5mm}] {}}
    [fill] circle (2pt) node [right] (O22) {$\scriptstyle \mathcal O(3)$}}
    node [right] (O2) {$\scriptstyle \mathcal O(2)$}
  }
  child {edge from parent [decorate,decoration={snake,amplitude=.4mm,segment length=2mm,post length=.5mm,pre length=1.5mm}] {}}
  child {[fill] circle (2pt) node [above] (O02) {$\scriptstyle \mathcal O(0)$}}
node [below left] (O3) {$\scriptstyle \mathcal O(4)$}
};   
\node[rectangle, rounded corners, fit= (O2) (O22) (OOO), draw, opacity=.5, very thick, inner xsep=1.5, inner sep=-2] (E1) {};
\path[opacity=.5, very thick, bend right = 20] (E1) edge [->] (O5.north);
\tikzstyle{level 1}=[sibling distance=13mm]
\tikzstyle{level 2}=[sibling distance=9mm] 
\tikzstyle{level 3}=[sibling distance=5mm] 
\draw (9,0) node {} [grow'=up]
child {[fill] circle (2pt) 
  child { edge from parent [decorate,decoration={snake,amplitude=.4mm,segment length=2mm,post length=.5mm,pre length=1.5mm}] {}}
    child {[fill] circle (2pt) node [above] {$\scriptstyle \mathcal O(0)$}}
    child {
      child{node [above] {$\scriptstyle f$} edge from parent [decorate,decoration={bumps,amplitude=2,segment length=2mm,post length=.5mm,pre length=1.5mm}] {}}
      child{{} node [above] {$\scriptstyle A$}}
      child{node [above] {$\scriptstyle f$} edge from parent [decorate,decoration={bumps,amplitude=2,segment length=2mm,post length=.5mm,pre length=1.5mm}] {}}
    [fill] circle (2pt) node [right] {$\scriptstyle \mathcal O(3)$}}
  child {edge from parent [decorate,decoration={snake,amplitude=.4mm,segment length=2mm,post length=.5mm,pre length=1.5mm}] {}}
node [below left] (O4) {$\scriptstyle \mathcal O(4)$}
}; 
\node[circle, fill, fit=(O3), draw, opacity=.15, very thin, inner sep=-4] (OO3) {};
\node[circle, fill, fit=(O2), draw, opacity=.15, very thin, inner sep=-4] (OO2) {};
\node[circle, fill, fit=(O02), draw, opacity=.15, very thin, inner sep=-4] (OO02) {};
\path[-] (OO2) edge [opacity=.15, ultra thick] (OO3);
\path[-] (OO02) edge [opacity=.15, ultra thick] (OO3);
\path[->] (OO3.south east) edge [opacity=.15, ultra thick, bend right =10] (O4);
\end{tikzpicture}
$$
Hence $\Phi_{t}$ is the push-out of $\bar{\Phi}_t$, the coequalizer of $\bar{\Phi}_t^1\rightrightarrows\bar{\Phi}_t^0$, 
\begin{equation*}
\xymatrix{
	\bullet\ar[r]^-{\bar{\Phi}_t}\ar[d]\ar@{}[rd]|{\text{push}}&\bullet\ar[d]\\
	\mathcal O_{B,t-1}\ar[r]^-{\Phi_t}&\mathcal O_{B,t}
}
\end{equation*}

The map $\bar{\Phi}_t$ can also be obtained by taking first vertical and then horizontal coequalizers in \eqref{cuadrado}. Let $\tilde{\Phi}_t^j$ be the coequalizer of $\bar\Phi_t^{1j}\rightrightarrows\bar\Phi_t^{0j}$. The functor $\tilde{\Phi}_t^0(n)$ is the coproduct of all corollas with $t$ bumpy leaves, $n$ snaky leaves, and an undetermined number of straight leaves, with labels as above. The natural projection is defined by inner edge contraction, e.g.
$$\begin{tikzpicture}[level distance=5mm, sibling distance=5mm] 
\tikzstyle{level 1}=[sibling distance=8mm] 
\tikzstyle{level 3}=[sibling distance=5mm] 
\node {} [grow'=up]
child {[fill] circle (2pt) 
	child {{} edge from parent [decorate,decoration={snake,amplitude=.4mm,segment length=2mm,post length=.5mm,pre length=1.5mm}] {}}
	child {[fill] circle (2pt) 
		child{node [above] {$\scriptstyle f$} edge from parent [decorate,decoration={bumps,amplitude=2,segment length=2mm,post length=.5mm,pre length=1.5mm}] {}}
		child{{} node [above] {$\scriptstyle A$}}
		child{node [above] {$\scriptstyle f$} edge from parent [decorate,decoration={bumps,amplitude=2,segment length=2mm,post length=.5mm,pre length=1.5mm}] {}}
		node [right] (O5) {$\scriptstyle \mathcal O(3)$}
	}
	child {{} edge from parent [decorate,decoration={snake,amplitude=.4mm,segment length=2mm,post length=.5mm,pre length=1.5mm}] {}}
	child {[fill] circle (2pt) node [right] (O0) {$\scriptstyle \mathcal O(0)$}}
	node [below right] (O4) {$\scriptstyle \mathcal O(4)$}
};   
\tikzstyle{level 1}=[sibling distance=4mm] 
\draw (7,0) node {} [grow'=up]
child {[fill] circle (2pt) 
	child {{} edge from parent [decorate,decoration={snake,amplitude=.4mm,segment length=2mm,post length=.5mm,pre length=1.5mm}] {}}
	child{node [above] {$\scriptstyle f$} edge from parent [decorate,decoration={bumps,amplitude=2,segment length=2mm,post length=.5mm,pre length=1.5mm}] {}}
	child{{} node [above] {$\scriptstyle A$}}
	child{node [above] {$\scriptstyle f$} edge from parent [decorate,decoration={bumps,amplitude=2,segment length=2mm,post length=.5mm,pre length=1.5mm}] {}}
	child {{} edge from parent [decorate,decoration={snake,amplitude=.4mm,segment length=2mm,post length=.5mm,pre length=1.5mm}] {}}
	node [below left] (O7) {$\scriptstyle \mathcal O(5)$}
}; 
\node[circle, fill, fit=(O4), draw, opacity=.15, very thin, inner sep=-4] (OO4) {};
\node[circle, fill, fit=(O5), draw, opacity=.15, very thin, inner sep=-4] (OO5) {};
\node[circle, fill, fit=(O0), draw, opacity=.15, very thin, inner sep=-4] (OO0) {};
\path[-] (OO4) edge [opacity=.15, ultra thick] (OO5);
\path[-] (OO4) edge [opacity=.15, ultra thick] (OO0);
\path[->] (OO4.south east) edge [opacity=.15, ultra thick, bend right =5] (O7);
\end{tikzpicture}
$$
This is indeed a split coequalizer. The arrows going backwards are defined by subdivision of straight and bumpy leaves, e.g.
$$\hspace{-7pt}\begin{array}{cc}\begin{tikzpicture}[level distance=5mm, sibling distance=5mm] 
\tikzstyle{level 1}=[sibling distance=4mm] 
\tikzstyle{level 3}=[sibling distance=5mm] 
\node {} [grow'=up]
child {[fill] circle (2pt) 
	child {edge from parent [decorate,decoration={snake,amplitude=.4mm,segment length=2mm,post length=.5mm,pre length=1.5mm}] {}}
	child{node [above] {$\scriptstyle f$} edge from parent [decorate,decoration={bumps,amplitude=2,segment length=2mm,post length=.5mm,pre length=1.5mm}] {}}
	child {edge from parent [decorate,decoration={snake,amplitude=.4mm,segment length=2mm,post length=.5mm,pre length=1.5mm}] {}}
	child{{} node [above] {$\scriptstyle A$}}
	node [below right] (O7) {$\scriptstyle \mathcal O(4)$}
}; 
\tikzstyle{level 1}=[sibling distance=5mm] 
\draw (3,0) node {} [grow'=up]
child {[fill] circle (2pt) 
	child {edge from parent [decorate,decoration={snake,amplitude=.4mm,segment length=2mm,post length=.5mm,pre length=1.5mm}] {}}
	child{[fill] circle (2pt) child{node [above] {$\scriptstyle f$} edge from parent [decorate,decoration={bumps,amplitude=2,segment length=2mm,post length=.5mm,pre length=1.5mm}] {}} node [above right] {$\scriptstyle \!\!\mathcal O(1)$}}
	child {edge from parent [decorate,decoration={snake,amplitude=.4mm,segment length=2mm,post length=.5mm,pre length=1.5mm}] {}}
	child{[fill] circle (2pt) child{{} node [above] {$\scriptstyle A$}} node [above right] {$\scriptstyle \!\!\mathcal O(1)$}}
	node [below left] (O72) {$\scriptstyle \mathcal O(4)$}
}; 
\path[->] ($(O7)+(.5,.3)$) edge [very thick, opacity=.5] ($(O72)+(-.7,.3)$);
\end{tikzpicture}
&
\begin{tikzpicture}[level distance=5mm, sibling distance=5mm] 
\tikzstyle{level 2}=[sibling distance=8mm] 
\tikzstyle{level 3}=[sibling distance=5mm] 
\node {} [grow'=up]
child {[fill] circle (2pt) 
	child {edge from parent [decorate,decoration={snake,amplitude=.4mm,segment length=2mm,post length=.5mm,pre length=1.5mm}] {}}
	child {
		child{node [above] {$\scriptstyle f$} edge from parent [decorate,decoration={bumps,amplitude=2,segment length=2mm,post length=.5mm,pre length=1.5mm}] {}}
		child{{} node [above] {$\scriptstyle A$}}
		child{node [above] {$\scriptstyle f$} edge from parent [decorate,decoration={bumps,amplitude=2,segment length=2mm,post length=.5mm,pre length=1.5mm}] {}}
		[fill] circle (2pt) node [right] {$\scriptstyle \mathcal O(3)$}}
	child {edge from parent [decorate,decoration={snake,amplitude=.4mm,segment length=2mm,post length=.5mm,pre length=1.5mm}] {}}
	child {[fill] circle (2pt) node [above] {$\scriptstyle \mathcal O(0)$}}
	node [below right] (O4) {$\scriptstyle \mathcal O(4)$}
}; 
\tikzstyle{level 2}=[sibling distance=8mm] 
\tikzstyle{level 3}=[sibling distance=8mm] 
\draw (3.5,0) node {} [grow'=up]
child {[fill] circle (2pt) 
	child {edge from parent [decorate,decoration={snake,amplitude=.4mm,segment length=2mm,post length=.5mm,pre length=1.5mm}] {}}
	child {
		child{[fill] circle (2pt) child{node [above] {$\scriptstyle f$} edge from parent [decorate,decoration={bumps,amplitude=2,segment length=2mm,post length=.5mm,pre length=1.5mm}] {}} node [above left] {$\scriptstyle \mathcal O(1)$}}
		child{[fill] circle (2pt) child{{} node [above] {$\scriptstyle A$}} node [above left] {$\scriptstyle \mathcal O(1)$}}
		child{[fill] circle (2pt) child{node [above] {$\scriptstyle f$} edge from parent [decorate,decoration={bumps,amplitude=2,segment length=2mm,post length=.5mm,pre length=1.5mm}] {}} node [above left] {$\scriptstyle \mathcal O(1)$}}
		[fill] circle (2pt) node [right] {$\scriptstyle \mathcal O(3)$}}
	child {edge from parent [decorate,decoration={snake,amplitude=.4mm,segment length=2mm,post length=.5mm,pre length=1.5mm}] {}}
	child {[fill] circle (2pt) node [above] {$\scriptstyle \mathcal O(0)$}}
	node [below left] (O42) {$\scriptstyle \mathcal O(4)$}
};
\path[->] ($(O4)+(.8,.3)$) edge [very thick, opacity=.5] ($(O42)+(-.8,.3)$);
\end{tikzpicture}
\end{array}
$$
One can similarly check that $\tilde{\Phi}_t^1(n)$ is the coproduct of all trees of heigth $\leq 3$  with $t+n$ leaves at level $2$, $t$ bumpy and $n$ snaky,  such that all level $3$ edges (if any) are straight leaves, with the usual labeling. Moreover, the parallel arrows $\tilde{\Phi}_t^1\rightrightarrows\tilde{\Phi}_t^0$ obtained by taking vertical coequalizers in \eqref{cuadrado} are defined by inner edge contraction and corolla contraction, respectively. Therefore $\bar\Phi_t$, regarded as the coequalizer of  $\tilde{\Phi}_t^1\rightrightarrows\tilde{\Phi}_t^0$, coincides with the arrow $\tilde\Phi_t$ in the statement.
\end{proof}  

Unlike Proposition \ref{free}, Proposition \ref{transAcof} was previously known in the symmetric case, at least implicitly. It follows from the proof of \cite[Proposition 5.4]{ahto} and from the construction of push-outs of free operad maps in \cite[\S5]{htnso}.

\section{Proofs}\label{proofs}\label{s3}

We start with a generalization of the Reedy model structure on the category of diagrams indexed by $\dos^n$, see  \cite[\S5.1]{hmc} and \cite[\S15.3]{hirschhorn}. 

\begin{proposition}
		If $\C M$ is a model category and $S\subset\{1,\dots,n\}$, there is a model structure $\C M_S^{\dos^n}$ on the diagram category $\C M^{\dos^n}$ such that a map $\tau\colon F\r G$ is
		\begin{itemize}
			\item a fibration if $\tau(x_1,\dots, x_n)\colon F(x_1,\dots, x_n)\r G(x_1,\dots, x_n)$ is a fibration in $\C M$ for all $(x_1,\dots, x_n)\in\dos^n$,
			\item a weak equivalence if $\tau(x_1,\dots, x_n)$ is a weak equivalence in $\C M$ for all $(x_1,\dots, x_n)\in\dos^n$ with $x_i=0$ if $i\in S$,
			\item and a cofibration if the relative latching map of $\tau$ at any $(x_1,\dots, x_n)\in\dos^n$ is a cofibration, and moreover a trivial cofibration if $x_i= 1$ for some $i\in S$.
		\end{itemize}
\end{proposition}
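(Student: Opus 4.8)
The plan is to construct the model structure by induction on $n$, peeling off one coordinate of the cube $\dos^n$ at a time; this reduces the whole statement to two model structures on the morphism category $\mor{\C N}=\C N^{\dos}$ of an arbitrary model category $\C N$. For $n=0$ there is nothing to prove, since $\C M^{\dos^0}=\C M$, necessarily $S=\varnothing$, and the relative latching map at the unique object of $\dos^0$ is the map itself. Assuming the statement for $n-1$, I would in fact carry along the additional inductive claim that \emph{a map in $\C M_{S'}^{\dos^{n-1}}$ is a trivial cofibration if and only if its relative latching map at every object of $\dos^{n-1}$ is a trivial cofibration in $\C M$}; this sharper form is what closes the induction. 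Identifying a $\dos^n$-diagram with a morphism between its two $\dos^{n-1}$-slices gives $\C M^{\dos^n}\cong\mor{\C N}$ with $\C N=\C M_{S'}^{\dos^{n-1}}$ and $S'=S\cap\{1,\dots,n-1\}$; I put on $\mor{\C N}$ the ordinary Reedy model structure if $n\notin S$, and a modified structure $\C N^{\dos}_{\{1\}}$, defined below, if $n\in S$.

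Next I would establish the two building blocks. Since $\dos=\{0<1\}$ is a direct Reedy category, the Reedy model structure on $\mor{\C N}$ is standard \cite[\S5.1]{hmc},\cite[\S15.3]{hirschhorn}: fibrations and weak equivalences are levelwise, and a map $\tau\colon(F_0\r F_1)\to(G_0\r G_1)$ is a cofibration iff $\tau(0)\colon F_0\r G_0$ and the relative latching map $F_1\cup_{F_0}G_0\r G_1$ are cofibrations. For $n\in S$ I instead need the structure $\C N^{\dos}_{\{1\}}$ on $\mor{\C N}$ with the same (levelwise) fibrations, with weak equivalences the maps $\tau$ such that $\tau(0)$ is a weak equivalence, and with cofibrations the maps such that $\tau(0)$ is a cofibration and $F_1\cup_{F_0}G_0\r G_1$ is a \emph{trivial} cofibration. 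I would verify its model axioms directly: two-out-of-three and closure under retracts are immediate; a factorization of $\tau$ of type $(\mathrm{cofibration},\mathrm{trivial\ fibration})$ or $(\mathrm{trivial\ cofibration},\mathrm{fibration})$ is obtained by first factoring $\tau(0)$ in $\C N$ as $F_0\r H_0\r G_0$ of the appropriate type and then factoring the induced map $F_1\cup_{F_0}H_0\r G_1$ in $\C N$; the lifting axioms are solved one level at a time, first at $0$ and then at $1$ using the relative-latching condition. (This $\C N^{\dos}_{\{1\}}$ is a right Bousfield localization of the Reedy structure---same fibrations, more weak equivalences, fewer cofibrations---but since $\C N$ is an arbitrary model category it is cleanest to check the axioms by hand.) A short argument using only that a pushout of a trivial cofibration is a trivial cofibration then shows, in either building block, that $\tau$ is a trivial cofibration iff both $\tau(0)$ and $F_1\cup_{F_0}G_0\r G_1$ are trivial cofibrations in $\C N$.

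It remains to match the model structure so obtained with the description in the statement. Here I would invoke the standard decomposition of latching objects for the product Reedy category $\dos^{n-1}\times\dos$ (a direct computation, cf.\ \cite[\S15.3]{hirschhorn}): for $x=(x',\varepsilon)\in\dos^{n-1}\times\dos$, writing $\tau_\varepsilon$ for the $\varepsilon$-slice of $\tau$, the relative latching map of $\tau$ at $x$ equals the relative latching map of $\tau_0$ at $x'$ when $\varepsilon=0$, and the relative latching map of $F_1\cup_{F_0}G_0\r G_1$ at $x'$ when $\varepsilon=1$ (pushouts of $\dos^{n-1}$-diagrams taken objectwise). Feeding this into the inductive hypothesis, the descriptions of the two building blocks, and the trivial-cofibration characterization just recorded, the asserted descriptions of the fibrations, weak equivalences, cofibrations, and trivial cofibrations of $\C M^{\dos^n}$ all follow by bookkeeping; one uses that the $n$-th coordinate is constantly $\varepsilon$ on the $\varepsilon$-slice, so that ``$x_i=1$ for some $i\in S$'' reduces correctly, and that when $n\in S$ it is exactly the $\varepsilon=1$ slice on which $\C N^{\dos}_{\{1\}}$ demands a trivial cofibration. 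Because the resulting fibrations, weak equivalences, and cofibrations are given by formulas symmetric in the coordinates, the structure does not depend on which coordinate was peeled off, so the induction is well posed.

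The main obstacle is the cofibration clause: reconciling the one-coordinate-at-a-time construction with the intrinsic description in terms of relative latching maps at \emph{all} vertices of $\dos^n$, and in particular threading the auxiliary trivial-cofibration characterization through the induction. The point where this really bites is the case $n\in S$, where one must recognize when the morphism $F_1\cup_{F_0}G_0\r G_1$ of $\dos^{n-1}$-diagrams is a trivial cofibration in $\C M_{S'}^{\dos^{n-1}}$; this is precisely what the characterization is for, and it relies on the (property-free) fact that pushouts preserve trivial cofibrations, so that no properness hypothesis on $\C M$ is needed. The direct verification of the building block $\C N^{\dos}_{\{1\}}$ over an arbitrary model category $\C N$ is the other place calling for a little honest work.
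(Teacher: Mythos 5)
Your proposal is correct and follows essentially the same route as the paper: induction via the exponential law $\C M^{\dos^n}=(\C M^{\dos^{n-1}})^{\dos}$, with the two building blocks being the Reedy structure and the modified structure $\C N^{\dos}_{\{1\}}$ (whose factorization and lifting axioms are checked by hand, first at level $0$ and then at level $1$ via the pushout $F_1\cup_{F_0}G_0$). The extra details you make explicit---the characterization of trivial cofibrations by relative latching maps and the compatibility of latching objects with the slicing---are exactly what the paper leaves to the reader (and to its Remark 3.2), so there is no substantive difference.
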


\begin{proof}
	For $n=0$  and $S=\varnothing$ (the only choice) we recover the given model structure on $\C M=\C M^{\dos^0}$. For $n=1$, $\C M^\dos_\varnothing$ is the Reedy model structure. Let us check that $\C M^\dos_{\{1\}}$ satisfies the axioms of model categories \cite[Definition 1.1.3]{hmc}. Only the parts of the factorization and lifting axioms involving a cofibration and a trivial fibration are not completely trivial. 
	
	A map $X\r Y$ in $\C M^\dos_{\{1\}}$ can be factored as $X\into Z\st{\sim}\onto Y$ in the following way. The map $X\r Y$ is the outer square in the following commutative diagram,\vspace{15pt}
	\[
	\xymatrix@C=10pt{
		X_0\ar[d]\ar@{>->}[rrr]\ar@/^20pt/[rrrrrr]\ar@{}[rrd]|{\text{push}}&&&Z_0\ar@{->>}[rrr]^\sim\ar[d]\ar[dl]&&&Y_0\ar[d]\\
		X_1\ar@{>->}[rr]\ar@/_20pt/[rrrrrr]&&P\ar@{>->}[r]^-\sim\ar@/_10pt/[rrrr]&Z_1\ar@{->>}[rrr]&&&Y_1
		}\vspace{15pt}
	\]
	Here, we first factor $X_0\r Y_0$ as a cofibration followed by a trivial fibration in $\C M$. Then we factor the induced map from the push-out $P=Z_0\cup_{X_0}X_1\r Y_1$ as a trivial cofibration followed by a fibration.
	
	A diagram
	\[\xymatrix{A\ar[r]\ar@{>->}[d]&X\ar@{->>}[d]^\sim\\B\ar[r]&Y}\]
	in $\C M^\dos_{\{1\}}$ is the same as a commutative cube 
	\[
	\begin{tikzpicture}[scale=1.7]
	\node (A0) {$A_0$};
	\draw (2,0) node (X0) {$X_0$};
	\draw (0,-2) node (B0) {$B_0$};
	\draw (2,-2) node (Y0) {$Y_0$};
		\draw (1.5,-1) node (A1) {$A_1$};
		\draw (3.5,-1) node (X1) {$X_1$};
		\draw (1.5,-3) node (B1) {$B_1$};
		\draw (3.5,-3) node (Y1) {$Y_1$};
			\draw (.75,-2.25) node (P) {$P$};
				\draw (.5,-1) node {\scriptsize push};
	\path (A0) edge [->] (X0) edge [>->] (B0);
	\path[->>] (X0) edge node [right, near start] {$\scriptstyle\sim$} (Y0);
	\path[->] (B0) edge (Y0);
			\path[->] (B0) edge [style=dashed] (X0);
		\path[->] (A0) edge [-,line width=6pt,draw=white] (A1) edge (A1);
		\path[->] (B0) edge (B1);
		\path[->] (X0) edge (X1);
		\path[->] (Y0) edge (Y1);
			\path (P) edge [-,line width=6pt,draw=white] (X1) edge [->, style=dashed] (X1);
			\path (B1) edge [-,line width=6pt,draw=white] (X1) edge [->, style=dashed] (X1);
		\path[->] (A1) edge [-,line width=6pt,draw=white] (X1) edge (X1) edge [-,line width=6pt,draw=white] (B1) edge (B1);
		\path[->>] (X1) edge (Y1);
		\path[->] (B1) edge (Y1);
		\path (A1) edge [-,line width=6pt,draw=white] (P) edge [>->] (P);
		\path[->] (B0) edge (P);
		\path[>->] (P) edge node [right] {$\scriptstyle\sim$} (B1);
	\end{tikzpicture}	
	\]
	We can construct a lifting $B\r X$ as follows. We first take a lifting $B_0\r X_0$ in $\C M$ of the rear commutative square. This lifting and the universal property of a push-out induce a map $P=B_0\cup_{A_0}A_1\r X_1$, which is the top arrow of a commutative square containing also $B_1\r Y_1$. We can take a lifting $B_1\r X_1$ of this square in $\C M$ too. These two liftings in $\C M$ define the lifting in $\C M^\dos_{\{1\}}$.
	
	For $n>1$, using the exponential law $\C M^{\dos^n}=\C M^{\dos^{n-1}\times\dos}=(\C M^{\dos^{n-1}})^\dos$, it is easy to see that $\C M^{\dos^n}_S=(\C M^{\dos^{n-1}}_S)^\dos_\varnothing$ if $n\notin S$ and $\C M^{\dos^n}_S=(\C M^{\dos^{n-1}}_{S\setminus\{n\}})^\dos_{\{1\}}$ if $n\in S$, so the proposition follows by induction.
\end{proof}

\begin{remark}\label{yyy}
	Notice that fibrations in $\C M^{\dos^n}_S$ are independent of $S$, they are the same as in the Reedy model structure $\C M^{\dos^n}_\varnothing$, hence the same holds for trivial cofibrations (they are the maps whose relative latching maps are trivial cofibrations in $\C M$). This means that $\C M^{\dos^n}_S$ is a right Bousfield localization of $\C M^{\dos^n}_\varnothing$.
	
	We remind the reader that cofibrant objects in $\C M^{\dos^n}_\varnothing$ are functors $F$ whose latching maps are cofibrations. They take values in cofibrant objects in $\C M$ and have cofibrant latching objects. Moreover, any weak equivalence between cofibrant functors induces weak equivalences between latching objects.	
\end{remark}

Given model categories $\C M$ and $\C N$, we introduce some naive homotopical notions in big functor categories.

\begin{definition}\label{loco}
A map $\tau \colon F\r G$ in $\C M^{\C N^{n}}$ is a weak equivalence, fibration or cofibration if, given cofibrations between cofibrant objects $g_1,\dots,g_n$ in $\C N$, $\tau(g_1,\dots, g_n)$ is so in $\C M^{\dos^n}_S$ for any $S\subset\{1,\dots,n\}$ such that $g_i$ is a trivial cofibration if $i\in S$. A map in $\C M^{\C N^{(\mathbb N)}}$ is a weak equivalence, fibration or cofibration if it is so aritywise. Cofibrant objects are defined in the usual way.
\end{definition}

\begin{remark}\label{after}
	Notice that $S=\varnothing$ is always the smallest subset satisfying the assumptions in the previous definition. For this choice, we obtain the strongest conditions on weak equivalences. Indeed, $\tau \colon F\r G$ is a weak equivalence in $\C M^{\C N^{n}}$ if and only if $\tau (X_1,\dots, X_n)\colon F(X_1,\dots, X_n)\r G(X_1,\dots, X_n)$ is a weak equivalence in $\C M$ for any cofibrant objects $X_1,\dots, X_n$ in $\C N$. This choice also says that cofibrations in $\C M^{\C N^{n}}$ yield Reedy cofibrations when evaluated at cofibrations between cofibrant objects in $\C N$, but they must satisfy extra conditions obtained for the biggest choice of $S$. Similarly for cofibrant functors, which preserve cofibrant objects. Notice that for $n=0$ we recover the original notions in $\C M^{\C N^{0}}=\C M$.

	The identity functor is cofibrant in $\C M^{\C M}$. Suppose that $\C M$ is a monoidal model category. The $n$-fold tensor product is cofibrant in $\C M^{\C M^n}$ by the push-out product axiom, $n\geq 2$. More generally, if $Y$ is a cofibrant object in $\C M$, the functor $(X_1,\dots,X_n)\mapsto Y\otimes\bigotimes_{i=1}^nX_i$ is cofibrant in $\C M^{\C M^n}$, and if $f$ is a  a weak equivalence between cofibrant objects or a (trivial) cofibration  in $\C M$ then so is the natural transformation $(X_1,\dots,X_n)\mapsto f\otimes\bigotimes_{i=1}^nX_i$  in $\C M^{\C M^n}$, $n\geq 0$.
\end{remark}

\begin{proposition}\label{mensgen}
	Let $\mathcal O$ be an operad in $\C V$ such that $\mathcal O(n)$ is cofibrant in $\C C$ for all $n\geq 0$. Then for any cofibrant $A$ in $\algebra{\mathcal O}{\C C}$, 
	$\mathcal O_A$ is cofibrant in $\C C^{\C C^{(\mathbb N)}}$. Moreover, for any cofibration with cofibrant source $f'\colon A\r B$ in $\algebra{\mathcal O}{\C C}$, $\mathcal O_{f'}\colon\mathcal O_A\r\mathcal O_B$ is a cofibration in $\C C^{\C C^{(\mathbb N)}}$.
\end{proposition}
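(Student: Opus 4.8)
The plan is to pass to the cell-level description of enveloping functor-operads given by Propositions~\ref{envinitial} and~\ref{transAcof}, and to concentrate the homotopical work into one statement about the maps $\tilde\Phi_t$ of Proposition~\ref{transAcof}. I begin with formalities. The assignment $A\mapsto\mathcal O_A$ is a functor, hence preserves retracts, and it preserves filtered colimits because each $\mathcal O_A^i(n)$ is a coproduct of tree-evaluations involving only finitely many tensor factors equal to $A$, and finite tensor products, coproducts and reflexive coequalizers commute with filtered colimits. By Definition~\ref{loco}, cofibrations and cofibrant objects in $\C C^{\C C^{(\mathbb N)}}$ are detected, after evaluation at cofibrations between cofibrant objects of $\C C$, on the model structures $\C C^{\dos^m}_S$; consequently they are closed under push-out and transfinite composition, and a push-out of a cofibration with cofibrant source has cofibrant target. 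Finally, a cofibrant $\mathcal O$-algebra is a retract of an $\mathcal F_{\mathcal O}(I)$-cell complex and a cofibration $f'\colon A\into B$ with cofibrant source is a retract of a relative $\mathcal F_{\mathcal O}(I)$-cell complex, where $I$ is the set of generating cofibrations of $\C C$, which I take to have cofibrant domains.

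Granting the key claim below, both assertions follow by transfinite induction on cells. The base case is $\mathcal O_{z(\mathcal O(0))}$, which is cofibrant by Proposition~\ref{envinitial} and the last paragraph of Remark~\ref{after}, since $z(\mathcal O(n))$ is cofibrant by hypothesis. For the inductive step and the relative statement, given a push-out~\eqref{algebrapo} with $f\in I$ (so $f$ is a cofibration between cofibrant objects), Proposition~\ref{transAcof} exhibits $\mathcal O_{f'}\colon\mathcal O_A\r\mathcal O_B$ as a transfinite composition of push-outs of the maps $\tilde\Phi_t$; so, given that $\mathcal O_A$ is already known to be cofibrant, the key claim makes each $\tilde\Phi_t$ a cofibration and hence $\mathcal O_{f'}$ a cofibration with cofibrant target. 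The key claim is: \emph{for every cofibrant $F$ in $\C C^{\C C^{(\mathbb N)}}$ and every cofibration $f$ between cofibrant objects of $\C C$, the map $\tilde\Phi_t^F$ in $\C C^{\C C^{(\mathbb N)}}$ --- defined as $\tilde\Phi_t$ in Proposition~\ref{transAcof} with $F$ in place of $\mathcal O_A$ --- is a cofibration, for all $t\geq1$.}

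To prove the claim, unwind Definition~\ref{loco}. Fix cofibrations between cofibrant objects $g_1,\dots,g_n$ in $\C C$, a subset $S\subseteq\{1,\dots,n\}$ along which the $g_i$ are trivial cofibrations, and one of the finitely many interleavings $\sigma$ of the $t$ bumpy leaves and the $n$ snaky leaves. Cofibrancy of $F$ applied to the tuple of $t+n$ morphisms given by $f$ in the bumpy slots and the $g_i$ in the snaky slots yields a cofibrant object of $\C C^{\dos^{t+n}}_{\bar S}$, where $\bar S$ is the image of $S$ among the snaky coordinates --- this is the choice we need, since $f$ need not be trivial and so the bumpy coordinates cannot be added. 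The $\sigma$-summand of $\tilde\Phi_t^F(n)(g_1,\dots,g_n)$ is, by construction, the map of $\dos^n$-diagrams obtained from this $\dos^{t+n}$-diagram by taking, pointwise over the $\dos^n$ indexing the snaky variables, the latching map over the $t$ bumpy coordinates. Using the identification of the Reedy structure on $\C C^{\dos^{t+n}}=(\C C^{\dos^n})^{\dos^t}$ with the iterated one, the standard fact that the total latching map of a Reedy-cofibrant $\dos^t$-diagram is a cofibration between cofibrant objects, and the fact that the latching map of the ambient diagram at a vertex $(1^t,y)$ coincides with the relative latching map at $y$ of this total latching map, one gets that the $\sigma$-summand is a cofibration in $\C C^{\dos^n}_S$, the trivial-cofibration conditions matching up because the $\bar S$-active vertices over $1^t$ are exactly the $(1^t,y)$ with $y$ being $S$-active. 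Taking the coproduct over the fixed set of interleavings preserves cofibrations in $\C C^{\dos^n}_S$, and the claim follows.

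The hard part is the claim, and within it the verification that taking the total latching over the $t$ bumpy coordinates carries a cofibrant object of $\C C^{\dos^{t+n}}_{\bar S}$ to a cofibration in $\C C^{\dos^n}_S$ with the right-Bousfield-localization conditions transported correctly. A minor but real point, already noted, is that the cells are free algebras on the generating cofibrations of $\C C$, so Definition~\ref{loco} applies to $F(t+n)$ with $f$ in the bumpy slots only if those generating cofibrations have cofibrant domains.
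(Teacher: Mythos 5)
Your overall route is the one the paper takes: reduce by the standard cell/retract argument to (i) the initial algebra, handled by Proposition~\ref{envinitial} and Remark~\ref{after}, and (ii) a single cell attachment, where Proposition~\ref{transAcof} writes $\mathcal O_{f'}$ as a transfinite composition of push-outs of the maps $\tilde\Phi_t$, and the whole weight falls on showing that each factor of $\tilde\Phi_t(n)$ is a cofibration between cofibrant objects in $\C C^{\C C^n}$ once $\mathcal O_A$ is cofibrant and $f$ is a cofibration between cofibrant objects. The paper asserts that last point in one sentence; your unwinding of it --- evaluating the cofibrant $F(t+n)$ at $f$ in the bumpy slots and $g_1,\dots,g_n$ in the snaky slots with $S$ supported only on the snaky coordinates, using $\C C^{\dos^{t+n}}_{\bar S}\cong(\C C^{\dos^n}_S)^{\dos^t}_\varnothing$, and identifying the $\sigma$-summand with the total latching map over the bumpy cube --- is correct and is a useful expansion of what the paper leaves implicit.

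The one genuine defect is the extra hypothesis you smuggle in at the end: that the generating cofibrations of $\C C$ have cofibrant domains. The proposition does not assume this, and $\C C$ need not admit such a generating set. The paper avoids it with the trick it cites from the proof of Proposition 4.2 of the second paper: in the cell attachment \eqref{algebrapo} with attaching map $\bar g\colon X\r A$, first push $f\colon X\into Y$ out along $\bar g$ in $\C C$ to get the cofibration $A\into A\cup_X Y$; the algebra push-out along $\mathcal F_{\mathcal O}(A\into A\cup_XY)$ with the tautological attaching map produces the same $B$, hence the same $\mathcal O_{f'}$, and the new $f$ has source $A=\mathcal O_A(0)$, which is cofibrant in $\C C$ precisely by the inductive hypothesis that $\mathcal O_A$ is cofibrant in $\C C^{\C C^{(\mathbb N)}}$ (evaluate in arity $0$). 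With that substitution your key claim applies verbatim and the cofibrant-domain assumption can be dropped; without it, your argument proves the proposition only for those $\C C$ admitting generating cofibrations with cofibrant source.
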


\begin{proof}
	Generating cofibrations in $\algebra{\mathcal O}{\C C}$ are free $\mathcal O$-algebra maps on cofibrations in $\C C$, see \cite[\S9]{htnso}. Hence, by the usual argument involving transfinite compositions and retracts, it is enough to notice that the first statement holds for the initial $\mathcal O$-algebra $A=z(\mathcal O(0))$ in $\C C$, see Proposition \ref{envinitial} and Remark \ref{after}, 	
	and then check the second statement when $f'$ fits into a push-out \eqref{algebrapo} with $f$ a cofibration in $\C C$, assuming that the first statement holds for $A$. Let us check this.
	
	We can replace $f$ with the bottom map in the following push-out in $\C C$,
	\[\xymatrix{
		X\ar[d]_{\bar g}\ar@{}[rd]|{\text{push}}\ar@{>->}[r]^-f&Y\ar[d]\\
		A\ar@{>->}[r]&A\cup_XY
		}\]
	Compare the trick at the beginning of the proof of \cite[Proposition 4.2]{htnso2}. Therefore, since $A=\mathcal O_A(0)$ is cofibrant in $\C C$, we can suppose that $X$, and hence $Y$, is cofibrant. The factors of the coproduct $\tilde\Phi_t(n)$ in Proposition \ref{transAcof} are cofibrations with cofibrant source and target in $\C C^{\C C^n}$ for all $n\geq 0$ and $t\geq 1$. This follows from the facts that $f$ is a cofibration between cofibrant objects and $\mathcal O_A(n)$ is cofibrant in  $\C C^{\C C^{n}}$ for $n\geq 1$. Hence $\Phi_t$ is a cofibration for all $t\geq 1$, and $\mathcal O_{f'}$ is a cofibration since it is a transfinite composition of cofibrations. This also proves that $\mathcal O_{B,t}$ is cofibrant in $\C C^{\C C^{(\mathbb N)}}$, $t\geq 0$. This fact will be used later.
\end{proof}

\begin{proposition}\label{masgen}
Let $\phi\colon\mathcal O\st{\sim}\r\mathcal P$ be a weak equivalence in $\operad{\C V}$. Assume that the objects $\mathcal O(n)$ and $\mathcal P(n)$ are cofibrant in $\C V$ for all $n\geq 0$. Given a cofibrant $\mathcal O$-algebra $A$ in $\C C$, the map $\phi_{\eta_A}\colon \mathcal O_A\r\mathcal P_{\phi_*A}$ induced by $\phi$ and by the unit $\eta_A\colon A\r\phi^{*}\phi_{*}A$ of the change of operad adjunction $\phi_{*}\dashv\phi^{*}$ \cite[(1)]{htnso} is a weak equivalence in $\C C^{\C C^{(\mathbb N)}}$.
\end{proposition}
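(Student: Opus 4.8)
The plan is to run a single transfinite induction over a cell presentation of $A$ as an $\mathcal O$-algebra, transporting everything along $\phi_{*}$ and combining the explicit descriptions in Propositions \ref{envinitial} and \ref{transAcof} with the cofibrancy statement of Proposition \ref{mensgen}. One first notes that $z$ carries cofibrations, and weak equivalences between cofibrant objects, to the corresponding classes in $\C C$, so that $z(\mathcal O(n))$ and $z(\mathcal P(n))$ are cofibrant in $\C C$ and $z(\phi(n))$ is a weak equivalence between cofibrant objects; hence Proposition \ref{mensgen} applies to both $\mathcal O$ and $\mathcal P$. Since $\phi_{\eta_{(-)}}$ is natural in the algebra, $\phi_{*}$ preserves retracts, and weak equivalences in $\C C^{\C C^{(\mathbb N)}}$ are closed under retracts, one may assume that $A=A_\lambda$ for a transfinite sequence $A_0\r A_1\r\cdots$ in which $A_0$ is the initial $\mathcal O$-algebra and $A_i\r A_{i+1}$ is a push-out of a free $\mathcal O$-algebra map $\mathcal F_{\mathcal O}(U)\r\mathcal F_{\mathcal O}(V)$ on a generating cofibration $U\into V$ of $\C C$ which, as in the proof of Proposition \ref{mensgen}, may be taken between cofibrant objects.

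Applying $\phi_{*}$ and using $\phi_{*}\mathcal F_{\mathcal O}=\mathcal F_{\mathcal P}$ produces a cell presentation $\phi_{*}A_0\r\phi_{*}A_1\r\cdots$ of $\phi_{*}A$ with the same attaching cells, and by naturality the maps $\phi_{\eta_{A_i}}\colon\mathcal O_{A_i}\r\mathcal P_{\phi_{*}A_i}$ assemble into a map of towers. I would prove by induction on $i$ that each $\phi_{\eta_{A_i}}$ is a weak equivalence between cofibrant objects of $\C C^{\C C^{(\mathbb N)}}$; the case $i=\lambda$ is the proposition. For $i=0$ one has $A_0=z(\mathcal O(0))$ and $\phi_{*}A_0=z(\mathcal P(0))$, so by Proposition \ref{envinitial} the map $\phi_{\eta_{A_0}}$ is aritywise $z(\phi(n))\otimes\bigotimes_{k}\id{X_k}$, which is a weak equivalence between cofibrant functors by the push-out product axiom (cf.\ Remark \ref{after}). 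Limit ordinals are immediate, since a transfinite composition of cofibrations between cofibrant objects computes a homotopy colimit.

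For the successor step one assumes $\phi_{\eta_{A_i}}$ is a weak equivalence between cofibrant functors. Proposition \ref{transAcof} presents $\mathcal O_{A_i}\r\mathcal O_{A_{i+1}}$ and $\mathcal P_{\phi_{*}A_i}\r\mathcal P_{\phi_{*}A_{i+1}}$ as transfinite compositions of push-outs of maps $\tilde\Phi_t$, respectively $\tilde\Phi'_t$, which are coproducts of corollas with inner vertex labelled $\mathcal O_{A_i}$, resp.\ $\mathcal P_{\phi_{*}A_i}$, and $t$ bumpy leaves labelled $U\into V$; and $\phi_{\eta_{A_i}}$, being a functor-operad map that restricts in arity $0$ to the unit $A_i=\mathcal O_{A_i}(0)\r\mathcal P_{\phi_{*}A_i}(0)=\phi_{*}A_i$ intertwining the two attaching data, induces a map between these presentations compatible with the attaching maps $\Psi_t$. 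As in the proof of Proposition \ref{mensgen}, each $\tilde\Phi_t$ and $\tilde\Phi'_t$ is a cofibration between cofibrant functors, and since $\phi_{\eta_{A_i}}$ is a weak equivalence of cofibrant functors so is $\tilde\Phi_t\r\tilde\Phi'_t$ by Ken Brown's lemma. The gluing lemma, induction on $t$, and the homotopy colimit argument then give that $\mathcal O_{A_{i+1}}\r\mathcal P_{\phi_{*}A_{i+1}}$ is a weak equivalence between cofibrant functors, completing the induction.

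The hard part is the successor step, and its difficulties coincide with those already faced in Proposition \ref{mensgen}: one must make the push-out product axiom precise in the big functor categories of Definition \ref{loco}, which amounts to running the whole argument after evaluation at arbitrary cofibrations between cofibrant objects $g_1,\dots,g_n$ of $\C C$ and working in the generalized Reedy model structures $\C C^{\dos^n}_S$, and one must check that the tree constructions of Proposition \ref{transAcof} preserve cofibrancy and weak equivalences between cofibrant functors. The compatibility of the two presentations under $\phi_{\eta_{A_i}}$ is only bookkeeping but requires unwinding the attaching maps $\Psi_t$; and the gluing and telescope lemmas must be invoked in each $\C C^{\dos^n}_S$, which is legitimate because these are model categories.
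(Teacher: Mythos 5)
Your proposal is correct and follows essentially the same route as the paper: the same reduction to the initial algebra plus cell attachments (with the same trick to make the attaching cofibration have cofibrant source), Proposition \ref{envinitial} for the base case, Proposition \ref{transAcof} together with the cofibrancy statements from Proposition \ref{mensgen} for the successor step, and the cube/gluing lemma plus the telescope argument to conclude. The only cosmetic difference is that you phrase the reduction as one explicit transfinite induction over a cell presentation, whereas the paper compresses this into "as in the proof of Proposition \ref{mensgen}, it is enough to check two cases."
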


\begin{proof}
As in the proof of Proposition \ref{mensgen}, it is enough to check the statement for $A=z(\mathcal O(0))$ the initial $\mathcal O$-algebra, and then for $B$ in \eqref{algebrapo} assuming that it holds for $A$ and that $f$ is a cofibration between cofibrant objects.	

For the initial $\mathcal O$-algebra, $\phi_{\eta_A}$ is induced by the sequence of weak equivalences between cofibrant objects $z(\phi)$, see Proposition \ref{envinitial} and Remark \ref{after}, so it is a weak equivalence.

In the situiation of \eqref{algebrapo}, we have another push-out square
\[\xymatrix{
	\mathcal F_{\mathcal P}(Y)\ar[r]^-{\mathcal F_{\mathcal P}(f)}\ar[d]_{\phi_*g}\ar@{}[rd]|{\text{push}}&\mathcal F_{\mathcal P}(Z)\ar[d]^{\phi_*g'}\\
	\phi_*A\ar[r]_-{\phi_*f'}&\phi_*B
}\]
where $\phi_*g$ and $\phi_*g'$ are adjoints to $\bar g$ and $\bar g'$, respectively. 
Notice that $\phi_*A$ is a cofibrant $\mathcal P$-algebra, since $\phi_*$ is a left Quillen functor. 
Using Proposition \ref{transAcof}, $\phi_{\eta_B}$ is the colimit of a sequence
\[
\xymatrix@C=10pt{
	\mathcal O_A=\mathcal O_{B,0}\ar[rr]^-{\Phi_1^{\mathcal O}}
	\ar@<-4.5ex>[d]_{\phi_{\eta_A}}^\sim	\ar@<3.5ex>[d]^{\phi_{\eta_{B,0}}}
	&&\mathcal O_{B,1}\ar[r]\ar[d]^{\phi_{\eta_{B,1}}}&\cdots\ar[r]&\mathcal O_{B,t-1}\ar[rr]^-{\Phi_t^{\mathcal O}}\ar[d]^{\phi_{\eta_{B,t-1}}}&&\mathcal O_{B,t}\ar[r]\ar[d]^{\phi_{\eta_{B,t}}}&\cdots\\
	\mathcal P_{\phi_*A}=\mathcal P_{\phi_*B,0}\ar[rr]^-{\Phi_1^{\mathcal P}}&&\mathcal P_{\phi_*B,1}\ar[r]&\cdots\ar[r]&\mathcal P_{\phi_*B,t-1}\ar[rr]^-{\Phi_t^{\mathcal P}}&&\mathcal P_{\phi_*B,t}\ar[r]&\cdots
	}
\]
where all objects are cofibrant and horizontal maps are cofibrations, see  Proposition \ref{mensgen} and its proof. The map $\phi_{\eta_{B,t}}$ is obtained by taking horizontal push-outs in the following commutative diagram
\[
\xymatrix{
	\mathcal O_{B,t-1}\ar[d]_{\phi_{\eta_{B,t-1}}}&\bullet\ar[l]_-{\Psi_t^{\mathcal O}}\ar[r]^-{\tilde{\Phi}_t^{\mathcal O}}\ar[d]&\bullet\ar[d]\\
	\mathcal P_{\phi_*B,t-1}&\bullet\ar[l]_-{\Psi_t^{\mathcal P}}\ar[r]^-{\tilde{\Phi}_t^{\mathcal P}}&\bullet
	}
\]
All objects in this diagram are cofibrant and the two arrows pointing $\rightarrow$ are cofibrations, see the proof of Proposition \ref{mensgen} again. The square on the right is aritywise the factor-preserving map between coproducts $\tilde{\Phi}_t^{\mathcal O}(n)\r\tilde{\Phi}_t^{\mathcal P}(n)$ in $\mor{\C C}^{\C C^{n}}$ induced by $\phi_{\eta_A}$ on inner vertices, e.g.~$(n=2)$
\[
\begin{tikzpicture}[level distance=5mm, sibling distance=3mm] 
\node {} [grow'=up]
child {[fill] circle (2pt) 
	child {{} node [above] {$\scriptstyle f$} edge from parent [decorate,decoration={bumps,amplitude=2,segment length=2mm,post length=.5mm,pre length=1.5mm}] {}}
	child {{} 
		edge from parent [decorate,decoration={snake,amplitude=.4mm,segment length=2mm,post length=.5mm,pre length=1.5mm}] {}}
	child {{} node [above] {$\scriptstyle f$} edge from parent [decorate,decoration={bumps,amplitude=2,segment length=2mm,post length=.5mm,pre length=1.5mm}] {}}
	child {{} node [above] {$\scriptstyle f$} edge from parent [decorate,decoration={bumps,amplitude=2,segment length=2mm,post length=.5mm,pre length=1.5mm}] {}}
	child {{} 
		edge from parent [decorate,decoration={snake,amplitude=.4mm,segment length=2mm,post length=.5mm,pre length=1.5mm}] {}}
	node [below right] (L) {$\scriptstyle \mathcal O_A(5)$}
};
\draw (4cm,0) node {} [grow'=up]
child {[fill] circle (2pt) 
	child {{} node [above] {$\scriptstyle f$} edge from parent [decorate,decoration={bumps,amplitude=2,segment length=2mm,post length=.5mm,pre length=1.5mm}] {}}
	child {{} 
		edge from parent [decorate,decoration={snake,amplitude=.4mm,segment length=2mm,post length=.5mm,pre length=1.5mm}] {}}
	child {{} node [above] {$\scriptstyle f$} edge from parent [decorate,decoration={bumps,amplitude=2,segment length=2mm,post length=.5mm,pre length=1.5mm}] {}}
	child {{} node [above] {$\scriptstyle f$} edge from parent [decorate,decoration={bumps,amplitude=2,segment length=2mm,post length=.5mm,pre length=1.5mm}] {}}
	child {{} 
		edge from parent [decorate,decoration={snake,amplitude=.4mm,segment length=2mm,post length=.5mm,pre length=1.5mm}] {}}
	node [below left] (R) {$\scriptstyle \mathcal P_{\phi_*A}(5)$}
};
\path[opacity=.5, very thick] (L) edge [->] node [above, opacity=1] {$\scriptstyle \phi_{\eta_{A}}(5)$} (R);
\end{tikzpicture}
\]
This map is a weak equivalence since $f$ is a cofibration between cofibrant objects and $\phi_{\eta_A}$ is a weak equivalence (by assumption) between cofibrant objects (by Proposition \ref{mensgen}). The starting map $\phi_{\eta_{B,0}}=\phi_{\eta_{A}}$ is a weak equivalence, hence we deduce by induction, using the cube lemma \cite[Lemma 5.2.6]{hmc}, that the maps $\phi_{\eta_{B,t}}$ are weak equivalences for all $t\geq 0$. Since $\phi_{\eta_{B}}$ is the colimit of weak equivalences between sequences of cofibrations with cofibrant starting objects, we deduce that $\phi_{\eta_{B}}$  is also a weak equivalence  \cite[Proposition 15.10.12 (1)]{hirschhorn}. Notice that \cite[Proposition 15.10.12 (1)]{hirschhorn} is also true for continuous sequences indexed by an arbitrary ordinal, compare \cite[Corollary 5.1.6]{hirschhorn}. This generalization is used to check the limit steps in the transfinite induction.
\end{proof}

Let $L\colon\C M\rightleftarrows \C N\colon R$ be a Quillen pair. Left composition with $L$ induces a `functor' $\C M^{\C M^n}\r \C N^{\C M^n}$ which preserves (trivial) cofibrations, $n\geq 0$. Right composition with $L^{\times n}=L\times\st{n}\cdots\times L$ gives rise to a `functor' $\C N^{\C N^n}\r \C N^{\C M^n}$ which preserves all homotopical notions in Definition \ref{loco}, $n\geq 0$. We will denote $L^{\times(\mathbb N)}=\{L^{\times n}\}_{n\geq 0}$.
	
Let us place ourselves in of \cite[\S7]{htnso2}. There is a natural map in $\C D^{\C C^{(\mathbb N)}}$, 
$$\chi_{\mathcal O,A} \colon\bar F\mathcal O_A \To F^{\oper}(\mathcal O)_{\bar F_{\mathcal O}(A)} \bar F^{\times(\mathbb N)}$$
defined by taking vertical coequalizers in the following diagram
\[
\xymatrix{
	\bar F\mathcal O_A^1 \ar[r]^-{\chi_{\mathcal O,A}^1 }\ar@<-.5ex>[d]\ar@<.5ex>[d]& F^{\oper}(\mathcal O)_{\bar F_{\mathcal O}(A)}^1 \bar F^{\times(\mathbb N)}\ar@<-.5ex>[d]\ar@<.5ex>[d]\\
	\bar F\mathcal O_A^0 \ar[r]^-{\chi_{\mathcal O,A}^0 }& F^{\oper}(\mathcal O)_{\bar F_{\mathcal O}(A)}^0 \bar F^{\times(\mathbb N)}
	}
\]
The functor $\bar F$ preserves coproducts since it is a left adjoint, and $\chi_{\mathcal O,A}^0(n)$ is factorwise defined as illustrated in the following picture $(n=2)$
\[
\bar F\left(\!\!\!\begin{array}{c}
\begin{tikzpicture}[level distance=5mm, sibling distance=5mm] 
\node {} [grow'=up]
child {[fill] circle (2pt) 
	child {{} node [above] {$\scriptstyle A$}}
	child {edge from parent [decorate,decoration={snake,amplitude=.4mm,segment length=2mm,post length=.5mm,pre length=1.5mm}] {}}
	child {{} node [above] {$\scriptstyle A$}}
	child {{} node [above] {$\scriptstyle A$}}
	child {edge from parent [decorate,decoration={snake,amplitude=.4mm,segment length=2mm,post length=.5mm,pre length=1.5mm}] {}}
	node [below left] {$\scriptstyle \mathcal O(5)$}
};
\end{tikzpicture}
\end{array}\right)
\begin{array}{c}
\begin{tikzpicture}[level distance=5mm, sibling distance=7mm] 
\node {} [grow'=up]
child {[fill] circle (2pt) 
	child {{} node [above] {$\scriptstyle \bar F(A)$}}
	child {{} node [above] {$\scriptstyle \bar F(-)$} edge from parent [decorate,decoration={snake,amplitude=.4mm,segment length=2mm,post length=.5mm,pre length=1.5mm}] {}}
	child {{} node [above] {$\scriptstyle \bar F(A)$}}
	child {{} node [above] {$\scriptstyle \bar F(A)$}}
	child {{} node [above] {$\scriptstyle \bar F(-)$} edge from parent [decorate,decoration={snake,amplitude=.4mm,segment length=2mm,post length=.5mm,pre length=1.5mm}] {}}
	node [below right] (O1) {$\scriptstyle F(\mathcal O(5))$}
};
\draw (4.5cm,0) node {} [grow'=up]
child {[fill] circle (2pt) 
	child {{} node [above] {$\scriptstyle \bar F(A)$}}
	child {{} node [above] {$\scriptstyle \bar F(-)$} edge from parent [decorate,decoration={snake,amplitude=.4mm,segment length=2mm,post length=.5mm,pre length=1.5mm}] {}}
	child {{} node [above] {$\scriptstyle \bar F(A)$}}
	child {{} node [above] {$\scriptstyle \bar F(A)$}}
	child {{} node [above] {$\scriptstyle \bar F(-)$} edge from parent [decorate,decoration={snake,amplitude=.4mm,segment length=2mm,post length=.5mm,pre length=1.5mm}] {}}
	node [below left] (O2) {$\scriptstyle F^{\oper}(\mathcal O(5))$}
};
\path[opacity=.5, very thick] (-2.5,.8) edge [->] (-1.5,.8);
\node[rectangle, rounded corners, fit=(O1), draw, opacity=.5, very thick, inner sep=-2] (E) {};
\path[->] (E) edge [opacity=.5, very thick] (O2);
\end{tikzpicture}
\end{array}
\]
Here, the first arrow is defined by the comultiplication of $\bar F$ and by the natural transformation $\tau \colon \bar Fz_{\C C}\r z_{\C D}F$, and the second one is defined by the map  $\chi\colon F(\mathcal O)\r F^{\oper}(\mathcal O)$ of sequences in $\C V$ \cite[(4-1)]{htnso2}. Similarly $\chi_{\mathcal O,A}^1(n)$.

\begin{proposition}\label{masgen1/2}
If $\bar F\dashv \bar G$ is a weak monoidal Quillen adjunction, $\C V$ and $\C W$ have cofibrant tensor units, $\mathcal O$ is a  cofibrant operad in $\C V$, and $A$ is a cofibrant $\mathcal O$-algebra in $\C C$, then $\chi_{\mathcal O,A}$ is a weak equivalence in $\C C^{\C C^{(\mathbb N)}}$.
\end{proposition}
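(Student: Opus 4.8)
The strategy is to mimic the proof of Proposition \ref{masgen}. Since generating cofibrations in $\algebra{\mathcal O}{\C C}$ are free $\mathcal O$-algebra maps on cofibrations in $\C C$, and the push-out trick at the beginning of the proof of Proposition \ref{mensgen} reduces such a cofibration to one between cofibrant objects, it suffices to prove the statement for $A=z(\mathcal O(0))$ the initial $\mathcal O$-algebra, and then for the algebra $B$ in a push-out \eqref{algebrapo} assuming it holds for $A$ and that $f$ is a cofibration between cofibrant objects in $\C C$. Note that $\mathcal O$ being a cofibrant operad in $\C V$, together with the cofibrancy of the tensor unit of $\C V$, makes $\mathcal O(n)$ cofibrant in $\C V$ for all $n$; hence $\mathcal O_A$ is cofibrant in $\C C^{\C C^{(\mathbb N)}}$ by Proposition \ref{mensgen}, and likewise $F^{\oper}(\mathcal O)_{\bar F_{\mathcal O}(A)}$ is cofibrant in $\C D^{\C D^{(\mathbb N)}}$. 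This cofibrancy, and that of the intermediate objects produced in Proposition \ref{transAcof} and its proof, is what licenses the inductive argument below.

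For the initial $\mathcal O$-algebra, Proposition \ref{envinitial} identifies $\mathcal O_A$ with the functor-operad $(X_1,\dots,X_n)\mapsto z_{\C C}(\mathcal O(n))\otimes\bigotimes_{i=1}^nX_i$, and (using that a left adjoint preserves initial objects, so $\bar F_{\mathcal O}(A)$ is the initial $F^{\oper}(\mathcal O)$-algebra) it identifies $F^{\oper}(\mathcal O)_{\bar F_{\mathcal O}(A)}$ with $(X_1,\dots,X_n)\mapsto z_{\C D}(F^{\oper}(\mathcal O)(n))\otimes\bigotimes_{i=1}^n\bar F(X_i)$. Under these identifications $\chi_{\mathcal O,A}(n)$ is the composite of the weak monoidal comparison map $\bar F\bigl(z_{\C C}(\mathcal O(n))\otimes\bigotimes_iX_i\bigr)\To z_{\C D}(F(\mathcal O(n)))\otimes\bigotimes_i\bar F(X_i)$, built from the comonoidal structure of $\bar F$ and from $\tau\colon\bar Fz_{\C C}\r z_{\C D}F$, followed by $z_{\C D}(\chi(n))\otimes\bigotimes_i\bar F(X_i)$. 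For cofibrant $X_1,\dots,X_n$ the first map is a weak equivalence because $\bar F\dashv\bar G$ is a weak monoidal Quillen adjunction, the tensor units of $\C V$ and $\C W$ are cofibrant, and $\mathcal O(n)$ is cofibrant; the second is a weak equivalence because $\chi\colon F(\mathcal O)\r F^{\oper}(\mathcal O)$, the map of sequences \cite[(4-1)]{htnso2}, is an aritywise weak equivalence between cofibrant sequences in $\C W$ for cofibrant $\mathcal O$, and tensoring with the cofibrant objects $\bar F(X_i)$ preserves weak equivalences. Thus $\chi_{\mathcal O,A}$, and similarly $\chi_{\mathcal O,A}^1$, is a weak equivalence, so the base case holds.

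For the inductive step, apply the induced functor on algebra categories to \eqref{algebrapo}; being a left adjoint it commutes with free algebras and push-outs, giving a push-out in $\algebra{F^{\oper}(\mathcal O)}{\C D}$ along the free map spanned by $\bar F(f)$, with lower edge $\bar F_{\mathcal O}(A)\r\bar F_{\mathcal O}(B)$ and attaching maps adjoint to $\bar F(\bar g)$ and $\bar F(\bar g')$. Applying Proposition \ref{transAcof} to $\mathcal O_{f'}$ and to its analogue over $F^{\oper}(\mathcal O)$, and using that $\bar F$ preserves colimits, $\chi_{\mathcal O,B}$ becomes the colimit of a ladder $\bar F\mathcal O_{B,t}\r F^{\oper}(\mathcal O)_{\bar F_{\mathcal O}(B),t}$, starting at $\chi_{\mathcal O,A}$, in which each vertical map is obtained from the previous one by taking horizontal push-outs in a diagram whose right square is the factor-preserving comparison $\bar F\tilde\Phi_t^{\mathcal O}(n)\r\tilde\Phi_t^{F^{\oper}(\mathcal O)}(n)$ induced on inner vertices by $\chi_{\mathcal O,A}$ and whose left arrow is the three-fold composite $\Psi_t$ (use of $\bar g$, inner edge contraction, restriction of $\bar\Psi_{t-1}$) of Proposition \ref{transAcof}; commutativity of this square and ladder uses the naturality of $\chi$ in the algebra, its compatibility with the functor-operad composition, and the induction hypothesis for the $\bar\Psi_{t-1}$ step. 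Since $\tilde\Phi_t(n)$ is built by plugging the cofibration $f$ into $t$ of the slots of $\mathcal O_A(t+n)$ and latching, the comparison $\bar F\tilde\Phi_t^{\mathcal O}(n)\r\tilde\Phi_t^{F^{\oper}(\mathcal O)}(n)$ is a weak equivalence: it is a coproduct of maps obtained from the weak equivalence $\chi_{\mathcal O,A}(t+n)$ between cofibrant functors (by the induction hypothesis and Proposition \ref{mensgen}) by restricting $t$ slots along $f$ and passing to latching objects, which preserves weak equivalences between cofibrant functors (Remarks \ref{yyy} and \ref{after}), and $\bar F$ preserves weak equivalences and cofibrations between cofibrant objects. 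All objects in the ladder are cofibrant and all horizontal maps are cofibrations, exactly as in the proof of Proposition \ref{mensgen}, so the cube lemma \cite[Lemma 5.2.6]{hmc} shows inductively that $\bar F\mathcal O_{B,t}\r F^{\oper}(\mathcal O)_{\bar F_{\mathcal O}(B),t}$ is a weak equivalence for all $t$, and \cite[Proposition 15.10.12 (1)]{hirschhorn} (in its transfinite form, as in the proof of Proposition \ref{masgen}) shows that $\chi_{\mathcal O,B}$, and likewise $\chi_{\mathcal O,B}^1$, is a weak equivalence. This completes the induction.

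The main obstacle is the base case. One must carefully unwind the coequalizer definition of $\chi_{\mathcal O,A}$ for the initial algebra, check that after the identifications of Proposition \ref{envinitial} it becomes the stated composite, and then invoke both the cofibrancy of $\mathcal O(n)$ — which is where the hypotheses that $\mathcal O$ is a cofibrant operad and that $\C V$ has a cofibrant tensor unit really enter — and the fact from \cite{htnso2} that $\chi$ is an aritywise weak equivalence between cofibrant sequences for cofibrant $\mathcal O$. The remaining work, namely the naturality of the ladder of Proposition \ref{transAcof} in the operad and its compatibility with $\bar F$, and the bookkeeping of the cofibrancy conditions needed for the cube lemma and for the transfinite version of \cite[Proposition 15.10.12 (1)]{hirschhorn}, is routine once Propositions \ref{mensgen} and \ref{transAcof} are in hand.
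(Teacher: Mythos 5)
Your proposal is correct and follows essentially the same route as the paper: the same reduction to the initial algebra plus the cellular induction over push-outs \eqref{algebrapo}, the same base case via Proposition \ref{envinitial} together with the comultiplication of $\bar F$, the transformation $\tau$, and the weak equivalence $\chi\colon F(\mathcal O)\r F^{\oper}(\mathcal O)$, and the same inductive step using the filtration of Proposition \ref{transAcof}, the cube lemma, and the transfinite form of \cite[Proposition 15.10.12 (1)]{hirschhorn}. The only difference is that you spell out in slightly more detail the cofibrancy bookkeeping (e.g.\ that a cofibrant operad with cofibrant tensor unit is aritywise cofibrant) that the paper leaves implicit.
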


\begin{proof}
	It is enough to prove the statement just in the two cases described at the beginning of the poof of Proposition \ref{masgen}.
	
	If $A$ is the initial $\mathcal O$-algebra, then $\bar F_{\mathcal O}(A)$ is the initial $F^{\oper}(\mathcal O)$-algebra and $\chi_{\mathcal O,A}$ 
	is defined like $\chi_{\mathcal O,A}^{0}$ above on corollas withouht straight leaves. 
	Comultiplication is a weak equivalence when evaluated at cofibrant objects, since $\bar F$ is a weak monoidal left Quillen functor \cite[Definition 2.6]{htnso2}. The natural transformation $\tau$ is also a weak equivalence between cofibrant objects when evaluated at $\mathcal O(n)$, since it is cofibrant in $\C V$ \cite[Corollary 3.8]{htnso2}. Moreover, $\chi_{\mathcal O}$ is a sequence of weak equivalences between cofibrant objects by \cite[Proposition 4.2]{htnso2}. This shows that $\chi_{\mathcal O,A}$ is a weak equivalence in this case.
 
	Assume we have a push-out \eqref{algebrapo} such that $f$ is a cofibration between cofibrant objects and $A$ satisfies the proposition. Then $\chi_{\mathcal O,B}$ is the colimit of a sequence where all objects are cofibrant and horizontal maps are cofibrations, see Proposition \ref{mensgen} and its proof,
	\[
	\xymatrix@C=10pt{
		\cdots\ar[r]&\bar F\mathcal O_{B,t-1}\ar[rrrr]^-{\bar F\Phi_t^{\mathcal O}}\ar[d]^{\chi_{\mathcal O,{B,t-1}}}&&&&\bar F\mathcal O_{B,t}\ar[r]\ar[d]^{\chi_{\mathcal O,{B,t}}}&\cdots\\
		\cdots\ar[r]& F^{\oper}(\mathcal O)_{\bar F_{\mathcal O}(B),t-1}\bar F^{\times (\mathbb N)}\ar[rrrr]^-{\Phi_t^{F^{\oper}(\mathcal O)}\bar F^{\times (\mathbb N)}}&&&& F^{\oper}(\mathcal O)_{\bar F_{\mathcal O}(B),t}\bar F^{\times (\mathbb N)}\ar[r]&\cdots
	}
	\]
	This sequence starts with the weak equivalence $\chi_{\mathcal O,B,0}=\chi_{\mathcal O,A}$. The map $\chi_{\mathcal O,B,t}$ is
	obtained by taking horizontal push-outs in the following commutative diagram 
	\[
	\xymatrix@C=70pt{
		\bar F\mathcal O_{B,t-1}\ar[d]_{\phi_{\eta_{B,t-1}}}&\bullet\ar[l]_-{\bar F\Psi_t^{\mathcal O}}\ar[r]^-{\bar F\tilde{\Phi}_t^{\mathcal O}}\ar[d]&\bullet\ar[d]\\
		F^{\oper}(\mathcal O)_{\bar F_{\mathcal O}(B),t-1}\bar F^{\times (\mathbb N)}&\bullet\ar[l]_-{\Psi_t^{F^{\oper}(\mathcal O)}\bar F^{\times (\mathbb N)}}\ar[r]^-{\tilde{\Phi}_t^{F^{\oper}(\mathcal O)}\bar F^{\times (\mathbb N)}}&\bullet
	}
	\]
	The two arrows pointing $\rightarrow$ are cofibrations between cofibrant objects, see again the proof of Proposition  \ref{mensgen}. The square on the right is aritywise the factor-preserving map between coproducts in $\mor{\C C}^{\C C^n}$ defined by $\chi_{\mathcal O,A}$ on inner vertices, e.g.~$(n=2)$
	\[
	\begin{tikzpicture}[level distance=5mm, sibling distance=3mm] 
	\node {} [grow'=up]
	child {[fill] circle (2pt) 
		child {{} node [above] {$\scriptstyle f$} edge from parent [decorate,decoration={bumps,amplitude=2,segment length=2mm,post length=.5mm,pre length=1.5mm}] {}}
		child {{} 
			edge from parent [decorate,decoration={snake,amplitude=.4mm,segment length=2mm,post length=.5mm,pre length=1.5mm}] {}}
		child {{} node [above] {$\scriptstyle f$} edge from parent [decorate,decoration={bumps,amplitude=2,segment length=2mm,post length=.5mm,pre length=1.5mm}] {}}
		child {{} node [above] {$\scriptstyle f$} edge from parent [decorate,decoration={bumps,amplitude=2,segment length=2mm,post length=.5mm,pre length=1.5mm}] {}}
		child {{} 
			edge from parent [decorate,decoration={snake,amplitude=.4mm,segment length=2mm,post length=.5mm,pre length=1.5mm}] {}}
		node [below right] (L) {$\scriptstyle \bar F\mathcal O_A(5)$}
	};
	\draw (6cm,0) node {} [grow'=up]
	child {[fill] circle (2pt) 
		child {{} node [above] {$\scriptstyle f$} edge from parent [decorate,decoration={bumps,amplitude=2,segment length=2mm,post length=.5mm,pre length=1.5mm}] {}}
		child {{} 
			edge from parent [decorate,decoration={snake,amplitude=.4mm,segment length=2mm,post length=.5mm,pre length=1.5mm}] {}}
		child {{} node [above] {$\scriptstyle f$} edge from parent [decorate,decoration={bumps,amplitude=2,segment length=2mm,post length=.5mm,pre length=1.5mm}] {}}
		child {{} node [above] {$\scriptstyle f$} edge from parent [decorate,decoration={bumps,amplitude=2,segment length=2mm,post length=.5mm,pre length=1.5mm}] {}}
		child {{} 
			edge from parent [decorate,decoration={snake,amplitude=.4mm,segment length=2mm,post length=.5mm,pre length=1.5mm}] {}}
		node [below left] (R) {$\scriptstyle F^{\oper}(\mathcal O)_{\bar F_{\mathcal O}(A)}(5)\bar F^{\times 5}$}
	};
	\tikzstyle{level 2}=[sibling distance=7mm] 
	\draw (10cm,0) node {} [grow'=up]
		child {[fill] circle (2pt) 
			child {{} node [above] {$\scriptstyle \bar F(f)$} edge from parent [decorate,decoration={bumps,amplitude=2,segment length=2mm,post length=.5mm,pre length=1.5mm}] {}}
			child {{} node [above] {$\scriptstyle \bar F(-)$} 
				edge from parent [decorate,decoration={snake,amplitude=.4mm,segment length=2mm,post length=.5mm,pre length=1.5mm}] {}}
			child {{} node [above] {$\scriptstyle \bar F(f)$} edge from parent [decorate,decoration={bumps,amplitude=2,segment length=2mm,post length=.5mm,pre length=1.5mm}] {}}
			child {{} node [above] {$\scriptstyle \bar F(f)$} edge from parent [decorate,decoration={bumps,amplitude=2,segment length=2mm,post length=.5mm,pre length=1.5mm}] {}}
			child {{} node [above] {$\scriptstyle \bar F(-)$}
				edge from parent [decorate,decoration={snake,amplitude=.4mm,segment length=2mm,post length=.5mm,pre length=1.5mm}] {}}
			node [below left] {$\scriptstyle F^{\oper}(\mathcal O)_{\bar F_{\mathcal O}(A)}(5)$}
		};
	\draw (7,.5) node {$=$};	
	\path[opacity=.5, very thick] (L) edge [->] node [above, opacity=1] {$\scriptstyle \chi_{\mathcal O,A}(5)$} (R);
	\end{tikzpicture}
	\]
Since $A$ satisfies the proposition, this is a weak equivalence, and the result follows as in the last paragraph of the proof of Proposition \ref{masgen}.
\end{proof}

We now abandon the context of \cite[\S7]{htnso2}.

\begin{definition}\label{excellent}
An operad $\mathcal O$ in $\C V$ is \emph{excellent} if:
\begin{enumerate}
\item Given an $\mathcal O$-algebra $A$ in $\C C$ with underlying cofibrant object, $\mathcal O_A$ is cofibrant in $\C C^{\C C^{(\mathbb N)}}$.
\item For any weak equivalence $\varphi\colon A\r C$ between $\mathcal O$-algebras in $\C C$ with underlying cofibrant objects, $\mathcal O_{\varphi}\colon\mathcal O_{A}\r\mathcal O_{C}$ is a weak equivalence in $\C C^{\C C^{(\mathbb N)}}$.
\end{enumerate}
\end{definition}

\begin{proposition}\label{sonex1}
The initial operad,  $\mathtt{Ass}^{\C V}$, and $\mathtt{uAss}^{\C V}$, are excellent.
\end{proposition}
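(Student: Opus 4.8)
The proof combines the explicit computations of Propositions~\ref{anillos}, \ref{uass} and~\ref{ass} with the bookkeeping of Definition~\ref{loco} and Remark~\ref{after}, reducing both conditions of Definition~\ref{excellent} to the push-out product axiom in $\C C$. I will use repeatedly that, for a cofibrant object $W$ of $\C C$, the endofunctors $W\otimes(-)$ and $(-)\otimes W$ preserve cofibrations and trivial cofibrations (push-out product with $\varnothing\rightarrowtail W$), hence, by Ken Brown's lemma, weak equivalences between cofibrant objects; and that a coproduct of cofibrant objects is cofibrant, with $(-)\amalg\unit$ preserving cofibrations and weak equivalences between cofibrant objects by left properness of cofibrant objects.

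For the initial operad $\mathcal O$, whose algebras in $\C C$ are just objects of $\C C$, Proposition~\ref{anillos} applied with $\mathcal O(1)=\unit$ gives $\mathcal O_A(0)=A$, $\mathcal O_A(1)=\id{\C C}$, and $\mathcal O_A(n)=\varnothing$ (the constant functor at the initial object) for $n\geq2$. If $A$ is cofibrant then $\mathcal O_A$ is cofibrant aritywise: $A$ by hypothesis, $\id{\C C}$ by Remark~\ref{after}, and $\varnothing=\varnothing\otimes\bigotimes_iX_i$ is cofibrant by Remark~\ref{after} since $\varnothing$ is a cofibrant object; this is condition~(1). Condition~(2) holds because $\mathcal O_\varphi$ is $\varphi$ in arity~$0$, the identity in arity~$1$, and the identity of $\varnothing$ in higher arities, so it is aritywise a weak equivalence whenever $\varphi$ is.

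For $\mathcal O=\mathtt{uAss}^{\C V}$ (respectively $\mathtt{Ass}^{\C V}$) and an $\mathcal O$-algebra $A$ with underlying cofibrant object, Propositions~\ref{uass} and~\ref{ass} give $\mathcal O_A(0)=A$ and, for $n\geq1$, that $\mathcal O_A(n)$ is the generalized tensor product $G_W\colon(X_1,\dots,X_n)\mapsto W\otimes X_1\otimes W\otimes\cdots\otimes X_n\otimes W$ with $W=A$ (respectively $W=A\amalg\unit$, which is cofibrant since $\unit$ and $A$ are). Thus both cases follow from the claim: for a cofibrant object $W$, the functor $G_W$ is cofibrant in $\C C^{\C C^n}$, $n\geq1$, and a weak equivalence $W\xrightarrow{\sim}W'$ between cofibrant objects induces a weak equivalence $G_W\xrightarrow{\sim}G_{W'}$ in $\C C^{\C C^n}$. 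Indeed, granting the claim, condition~(1) is immediate, and condition~(2) holds because in each positive arity $\mathcal O_\varphi$ is the map $G_A\to G_C$ (respectively $G_{A\amalg\unit}\to G_{C\amalg\unit}$) induced by $\varphi$ (respectively by the weak equivalence $\varphi\amalg\id{\unit}$ between cofibrant objects), and in arity~$0$ it equals $\varphi$.

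It remains to prove the claim. Evaluating $G_W$ at cofibrations between cofibrant objects $g_1,\dots,g_n$ yields, by the exponential law, the functor $\dos^n\to\C C$ which is the $n$-fold tensor product $\otimes^{(n)}$ evaluated at the functors $h_1=W\otimes g_1\otimes W$ and $h_j=g_j\otimes W$ ($2\leq j\leq n$), each $\dos\to\C C$, each a cofibration between cofibrant objects, and a trivial cofibration whenever the corresponding $g_j$ is. Since $\otimes^{(n)}$ is cofibrant in $\C C^{\C C^n}$ for all $n\geq1$ (Remark~\ref{after}), this functor is a cofibrant object of $\C C^{\dos^n}_S$ for every $S$ with $g_j$ a trivial cofibration for $j\in S$; hence $G_W$ is cofibrant. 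For homotopy invariance, by Remark~\ref{after} it suffices to check that $G_W(X_1,\dots,X_n)\to G_{W'}(X_1,\dots,X_n)$ is a weak equivalence for all cofibrant $X_i$; writing it as the composite of the maps that replace one occurrence of $W$ by $W'$ at a time, each factor is the tensor product of $W\xrightarrow{\sim}W'$ with cofibrant objects on its two sides, hence a weak equivalence between cofibrant objects, so the composite is one. The point that needs care — and the only nontrivial part of the argument — is the cofibrancy statement, where one must check that $\otimes^{(n)}$ evaluated at the $h_j$ is cofibrant not merely Reedy-wise but in $\C C^{\dos^n}_S$ for every admissible $S$, i.e.\ keep track, slot by slot, of where trivial cofibrations appear.
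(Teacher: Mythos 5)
Your argument is correct and takes essentially the same route as the paper, whose proof is the one-line observation that the statement follows from Propositions \ref{uass}, \ref{ass} and \ref{anillos} together with the push-out product axiom; you have simply supplied the details (the reduction of $\mathcal O_A(n)$ to a functor of the form $Y\otimes\bigotimes_i X_i$ precomposed with $X_i\mapsto X_i\otimes W$, and the slot-by-slot bookkeeping of the subsets $S$). Note that for $\mathtt{Ass}^{\C V}$ you use, as you correctly flag, that $\unit$ is cofibrant; this is the standing hypothesis under which Proposition \ref{sonex1} is applied (cf.\ Proposition \ref{dura} (2) (b)), the non-cofibrant-unit situation being handled separately via pseudo-cofibrancy in Remark \ref{ende}.
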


This follows from Propositions \ref{uass}, \ref{ass}, and \ref{anillos}, together with the push-out product axiom.

\begin{proposition}\label{marrown}
If $\mathcal O$ is an excellent operad in $\C V$ and $\psi\colon A\r B$  is a cofibration of $\mathcal O$-algebras in $\C C$ such that $A$ has underlying cofibrant object, then $\psi$ is a cofibration in $\C C$. In particular $B$ also has an underlying cofibrant object
\end{proposition}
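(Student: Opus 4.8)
The plan is to realize $\psi$ as a retract, in the category of $\mathcal O$-algebras under $A$, of a relative cell complex whose cells are free $\mathcal O$-algebra maps on cofibrations of $\C C$, and then to control the underlying object along the cellular filtration by means of Proposition \ref{transAcof} in arity $0$. The key observation is that $\mathcal O_A(0)=A$, so the arity $0$ part of Proposition \ref{transAcof} is exactly the corrected Lemma \ref{pind2}: for an $\mathcal O$-algebra push-out \eqref{algebrapo} the underlying map $f'\colon A\r B$ in $\C C$ is the transfinite composition of a sequence of push-outs of the latching maps $\mathcal O_A(t)(f,\dots,f)$, $t\geq 1$.

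First I would reduce to the case where $\psi$ is an actual relative cell complex $A=C_0\r C_1\r\cdots\r C_\mu\r\cdots$, each $C_\mu\r C_{\mu+1}$ obtained by pushing out a free map $\mathcal F_{\mathcal O}(X)\r\mathcal F_{\mathcal O}(Y)$ with $X\into Y$ a generating cofibration of $\C C$ (the generating cofibrations of $\algebra{\mathcal O}{\C C}$ being the free maps on those of $\C C$, see \cite[\S9]{htnso}): by the small object argument $\psi$ is a retract under $A$ of such a cell complex $A\r C$, and since retracts of cofibrations (resp.\ of cofibrant objects) of $\C C$ are again cofibrations (resp.\ cofibrant), it is enough to prove the statement for $A\r C$. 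I would then show by transfinite induction on $\mu$ that $A\r C_\mu$ is a cofibration of $\C C$ and $C_\mu$ has underlying cofibrant object. The limit steps are immediate, since $\otimes$ preserves filtered colimits in each variable, so the forgetful functor $\algebra{\mathcal O}{\C C}\r\C C$ preserves the continuous transfinite colimits in play, and an underlying colimit of cofibrations between cofibrant objects starting at a cofibrant object is again of this form.

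For the successor step, assume $C_\mu$ has underlying cofibrant object and $C_{\mu+1}=C_\mu\cup_{\mathcal F_{\mathcal O}(X)}\mathcal F_{\mathcal O}(Y)$ is obtained along $\bar g\colon X\r C_\mu$. As in the proof of Proposition \ref{mensgen} (the trick of \cite[Proposition 4.2]{htnso2}), I would rewrite this attachment as the push-out of $C_\mu$ along the free map $\mathcal F_{\mathcal O}(h)$, where $h\colon C_\mu\into W:=C_\mu\cup_X Y$ is the push-out of $X\into Y$ in $\C C$; since $C_\mu$ is cofibrant, so is $W$, and $h$ is a cofibration between cofibrant objects. Proposition \ref{transAcof} in arity $0$ now presents $C_\mu\r C_{\mu+1}$ as a transfinite composition of push-outs of the latching maps $\mathcal O_{C_\mu}(t)(h,\dots,h)$, $t\geq 1$. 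Because $\mathcal O$ is excellent and $C_\mu$ has underlying cofibrant object, Definition \ref{excellent}(1) gives that $\mathcal O_{C_\mu}$ is cofibrant in $\C C^{\C C^{(\mathbb N)}}$, hence each $\mathcal O_{C_\mu}(t)$ is a cofibrant functor in $\C C^{\C C^{t}}$; evaluating it at $h,\dots,h$ ($t$ copies) yields, by Definition \ref{loco} and Remark \ref{after}, a Reedy cofibrant diagram in $\C C^{\dos^t}$, so by Remark \ref{yyy} its latching map at $(1,\dots,1)\in\dos^t$, which is by definition $\mathcal O_{C_\mu}(t)(h,\dots,h)$, is a cofibration between cofibrant objects of $\C C$. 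Therefore $C_\mu\r C_{\mu+1}$ is a transfinite composition of push-outs of cofibrations between cofibrant objects starting at the cofibrant object $C_\mu$, hence a cofibration of $\C C$ with cofibrant target; composing with $A\r C_\mu$ finishes the induction, and the statement for $\psi$ then follows from the retract reduction.

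I expect the only genuine obstacle to be bookkeeping: identifying the coproduct $\tilde\Phi_t(0)$ of Proposition \ref{transAcof} with the latching-map notation $\mathcal O_{C_\mu}(t)(h,\dots,h)$, and unwinding Definitions \ref{loco} and Remarks \ref{yyy}, \ref{after} to see that a cofibrant functor, evaluated at cofibrations between cofibrant objects, has a latching map which is a cofibration between cofibrant objects. Everything substantial is already contained in Proposition \ref{transAcof} and in the excellence hypothesis, so no new homotopical input is required.
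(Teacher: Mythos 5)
Your argument is correct and is essentially the paper's own proof, just written out in more detail: both reduce by the retract/transfinite-composition argument (plus the push-out trick from \cite[Proposition 4.2]{htnso2}) to a single push-out \eqref{algebrapo} with $f$ a cofibration between cofibrant objects, then apply the corrected filtration of \cite[\S8]{htnso} (equivalently, the arity $0$ part of Proposition \ref{transAcof}) and use Definition \ref{excellent}~(1) to see that the latching maps $\mathcal O_A(t)(f,\dots,f)$ are cofibrations. The only difference is expository: the paper compresses the reduction into a reference to the proof of Proposition \ref{mensgen}, whereas you unwind the cell-by-cell induction explicitly.
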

 
\begin{proof}
As in the proof of Proposition \ref{mensgen},  we can assume that $\psi=f'$ in \eqref{algebrapo} with $f$ a cofibration between cofibrant objects in $\C C$. By \cite[\S8]{htnso} (corrected version), $A\r B$ is a transfinite composition of push-outs of the maps $\mathcal O_A(t)(f,\dots,f)$. These maps are cofibrations by Definition \ref{excellent} (1).
\end{proof}

\begin{theorem}\label{cuandoex}
If $\mathcal O$ is an excellent operad in $\C V$ and 
$$\xymatrix{A\ar@{ >->}[r]^-{\psi}\ar[d]_-{\varphi}^{\sim}\ar@{}[rd]|{\text{push}}&B\ar[d]^-{\varphi'}\\
C\ar@{ >->}[r]_-{\psi'}&C\cup_{A}B}$$
is a push-out of $\mathcal O$-algebras in $\C C$ such that the underlying objects of $A$ and $C$ are cofibrant, then $\varphi'$ is a weak equivalence. 
\end{theorem}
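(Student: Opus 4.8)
The plan is to follow the pattern of the proofs of Propositions \ref{mensgen}, \ref{masgen} and \ref{marrown}: first reduce to a single cell attachment, then analyze it with Lemma \ref{pind2} and the excellence of $\mathcal O$.

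\emph{Reduction.} A general cofibration $\psi$ is, under $A$, a retract of a relative $\mathcal O$-cell complex. I would rewrite each cell attachment $\mathcal F_{\mathcal O}(Y_i)\r\mathcal F_{\mathcal O}(Z_i)$ as a push-out of the free map $\mathcal F_{\mathcal O}(D_i)\r\mathcal F_{\mathcal O}(D_i\cup_{Y_i}Z_i)$ along the counit $\mathcal F_{\mathcal O}(D_i)\r D_i$, where $D_i$ is the $i$-th stage and $D_i\cup_{Y_i}Z_i$ is the pushout in $\C C$ of the generating cofibration $Y_i\into Z_i$ along the adjoint attaching map $Y_i\r D_i$; since $D_i$ is cofibrant in $\C C$ by Proposition \ref{marrown} and induction, $D_i\into D_i\cup_{Y_i}Z_i$ is a cofibration between cofibrant objects. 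In this way $\psi$ becomes a retract of a transfinite composition of push-outs of free maps on cofibrations between cofibrant objects, and pushing $\varphi$ out along a retract (resp. a transfinite composition) of such maps yields a retract (resp. a colimit) of the corresponding comparison maps, all algebras involved having cofibrant underlying objects by Proposition \ref{marrown}. Invoking \cite[Proposition 15.10.12 (1)]{hirschhorn} (in the transfinite form used in the proof of Proposition \ref{masgen}) for the colimits, I would reduce to the case where $\psi=f'$ sits in a single push-out \eqref{algebrapo} with $f\colon Y\into Z$ a cofibration between cofibrant objects in $\C C$.

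\emph{The single cell.} In this case $C\cup_A B$ is again a push-out of $\mathcal F_{\mathcal O}(f)$, now along $\varphi g\colon\mathcal F_{\mathcal O}(Y)\r C$ (whose adjoint is $\varphi\bar g$), and the evident morphism between these two push-out squares induces $\varphi'$. Applying Lemma \ref{pind2} (i.e. \cite[\S8]{htnso}, corrected version) to both squares and using the naturality of that construction, I would obtain two $\omega$-sequences in $\C C$
\[A=B_0\To B_1\r\cdots\r B_{t-1}\To B_t\r\cdots,\qquad C=C_0\To C_1\r\cdots\r C_{t-1}\To C_t\r\cdots,\]
with colimits $B$ and $C\cup_A B$, together with compatible maps $\varphi_t\colon B_t\r C_t$ such that $\varphi_0=\varphi$ and $\colim_t\varphi_t=\varphi'$; here $B_{t-1}\r B_t$ is the push-out of the latching map $\mathcal O_A(t)(f,\dots,f)$ along the attaching map $\psi_t$ of Lemma \ref{pind2}, $C_{t-1}\r C_t$ is the analogous push-out for $\mathcal O_C$, and $\varphi_t$ is part of a morphism of the two defining push-out squares whose components on the attaching objects are induced by the functor-operad map $\mathcal O_\varphi\colon\mathcal O_A\r\mathcal O_C$.

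\emph{Induction on $t$.} I would prove by induction that $\varphi_t$ is a weak equivalence and $B_t$, $C_t$ have cofibrant underlying objects; the case $t=0$ is the hypothesis. Since $\mathcal O$ is excellent, $\mathcal O_A$ and $\mathcal O_C$ are cofibrant in $\C C^{\C C^{(\mathbb N)}}$ by Definition \ref{excellent} (1); as $f$ is a cofibration between cofibrant objects, the composite functors $\dos^t\r\C C$ written $\mathcal O_A(t)(f,\dots,f)$ and $\mathcal O_C(t)(f,\dots,f)$ are Reedy cofibrant, hence their latching maps at $(1,\dots,1)$ — which is what that notation stands for — are cofibrations between cofibrant objects of $\C C$, and $\mathcal O_A(t)(Z,\dots,Z)$, $\mathcal O_C(t)(Z,\dots,Z)$ are cofibrant (Remarks \ref{yyy} and \ref{after}). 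By Definition \ref{excellent} (2) and Remark \ref{after}, $\mathcal O_\varphi$ is a weak equivalence in $\C C^{\C C^{(\mathbb N)}}$, so the induced map $\mathcal O_A(t)(f,\dots,f)\r\mathcal O_C(t)(f,\dots,f)$ is a weak equivalence on sources and on targets (Remark \ref{yyy}). Feeding this, together with the inductive hypothesis that $\varphi_{t-1}$ is a weak equivalence, into the cube lemma \cite[Lemma 5.2.6]{hmc}, I would conclude that $\varphi_t$ is a weak equivalence; and $B_t$, $C_t$ are cofibrant, being push-outs of cofibrations between cofibrant objects along maps out of cofibrant objects (so $B_{t-1}\r B_t$, $C_{t-1}\r C_t$ are also cofibrations in $\C C$). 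Finally $\varphi'=\colim_t\varphi_t$ is the colimit of weak equivalences between $\omega$-sequences of cofibrations between cofibrant objects, hence a weak equivalence by \cite[Proposition 15.10.12 (1)]{hirschhorn}.

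\emph{Main obstacle.} I expect the delicate points to be, in the single-cell case, the careful verification that the construction of Lemma \ref{pind2} is natural in the pair (\,$\mathcal O$-algebra, attaching map\,), so that the comparison maps $\varphi_t$ and the morphisms between the defining push-out squares exist and are commutative — in particular that the attaching maps $\psi_t$ for $\mathcal O_A$ and for $\mathcal O_C$ are compatible via $\mathcal O_\varphi$ and $\varphi_{t-1}$ — together with the translation between the abstract notions of Definition \ref{loco} in $\C C^{\C C^{(\mathbb N)}}$ and the concrete statements used in $\C C$ about the maps $\mathcal O_A(t)(f,\dots,f)$ (that they are cofibrations between cofibrant objects and that $\mathcal O_\varphi$ induces weak equivalences between them), which rely entirely on Remarks \ref{yyy} and \ref{after}.
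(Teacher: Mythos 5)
Your proposal is correct and follows essentially the same route as the paper's proof: reduce to a single cell attachment with $f$ a cofibration between cofibrant objects as in Proposition \ref{mensgen}, compare the two filtrations of Lemma \ref{pind2} for $\mathcal O_A$ and $\mathcal O_C$ via $\mathcal O_\varphi(t)(f,\dots,f)$, and conclude by the cube lemma and \cite[Proposition 15.10.12 (1)]{hirschhorn}. The only difference is that you spell out the reduction and the latching-map bookkeeping (Remarks \ref{yyy} and \ref{after}) in more detail than the paper does.
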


\begin{proof}
As in the proof of Proposition \ref{mensgen}, it is enough to consider the case where $\psi=f'$ is a push-out \eqref{algebrapo} with $f$ a cofibration between cofibrant objects. In this case, the map $\psi'$ fits into the following push-out
\[\xymatrix{
	\mathcal F_{\mathcal O}(Y)\ar[r]^-{\mathcal F_{\mathcal O}(f)}\ar[d]_{\text{adjoint of }\varphi\bar g}\ar@{}[rd]|{\text{push}}&\mathcal F_{\mathcal O}(Z)\ar[d]^{\text{adjoint of }\varphi'\bar g'}\\
	C\ar[r]_-{\psi'}&C\cup_AB
}\]
and $\varphi'$ is the colimit of a sequence where all objects are cofibrant and horizontal maps are cofibrations, see Proposition \ref{marrown} and its proof, 
\[
\xymatrix@C=10pt{
	A=B_{0}\ar[rr]
	\ar@<-3ex>[d]_{\varphi}^\sim	\ar@<2.5ex>[d]^{\varphi'_{{0}}}
	&&B_{1}\ar[r]\ar[d]^{\varphi'_{{1}}}&\cdots\ar[r]&B_{t-1}\ar[rr]\ar[d]^{\varphi'_{{t-1}}}&&B_{t}\ar[r]\ar[d]^{\varphi'_{{t}}}&\cdots\\
	C=D_{0}\ar[rr]&&D_{1}\ar[r]&\cdots\ar[r]&D_{t-1}\ar[rr]&&D_{t}\ar[r]&\cdots
}
\]
The map $\varphi'_{t}$ is obtained by taking horizontal push-outs in the following commutative diagram of cofibrant objects
\[
\xymatrix{
	B_{t-1}\ar[d]_{\varphi_{t-1}'}&\bullet\ar[l]_-{\psi_t^{A}}\ar[rr]^-{\mathcal O_A(t)(f,\dots,f)}\ar[d]&&\bullet\ar[d]\\
	D_{t-1}&\bullet\ar[l]_-{\psi_t^{C}}\ar[rr]^-{\mathcal O_C(t)(f,\dots,f)}&&\bullet
}
\]
Here, the two arrows pointing $\r$ are cofibrations between cofibrant objects since $\mathcal O_A$ and $\mathcal O_C$ are cofibrant in $\C C^{\C C^{(\mathbb N)}}$ and $f$ is a cofibration between cofibrant objects. The second square is the map $\mathcal O_\varphi(t)(f,\dots,f)\colon \mathcal O_A(t)(f,\dots,f)\r \mathcal O_C(t)(f,\dots,f)$ in $\mor{\C C}$. This map is a weak equivalence because $\mathcal O_\varphi$ is a weak equivalence between cofibrant objects. Now the result follows as in the proof of Proposition \ref{masgen}.
\end{proof}

Cofibrant functors preserve weak equivalences between cofibrant objects.

\begin{lemma}\label{condon}
Any functor $F\colon\C N^n\r\C M$ which is cofibrant in $\C M^{\C N^{n}}$ takes a weak equivalence between cofibrant objects in $\C N^n$, with respect to the product model structure, to a weak equivalence between cofibrant objects in $\C M$.
\end{lemma}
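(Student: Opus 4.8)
The plan is to reduce the statement to Ken Brown's lemma. The conclusion has two halves: that $F$ preserves weak equivalences between cofibrant objects, and that it carries cofibrant objects to cofibrant objects. The second half is already recorded in Remark~\ref{after} (cofibrant functors preserve cofibrant objects), so I would only have to deal with the first. By Ken Brown's lemma \cite[Lemma~1.1.12]{hmc} applied to $F\colon\C N^n\r\C M$, it suffices to prove that $F$ carries a trivial cofibration between cofibrant objects of the product model category $\C N^n$ to a weak equivalence in $\C M$.

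So let $(g_1,\dots,g_n)$ be such a trivial cofibration; by definition of the product model structure each $g_i\colon U_i\r V_i$ is a trivial cofibration between cofibrant objects of $\C N$. Since $F$ is cofibrant in $\C M^{\C N^n}$, Definition~\ref{loco} applied with $S=\{1,\dots,n\}$ (allowed because every $g_i$ is a trivial cofibration) tells us that the $n$-cube $W:=F(g_1,\dots,g_n)\colon\dos^n\r\C M$ is a cofibrant object of $\C M^{\dos^n}_{\{1,\dots,n\}}$. Writing $\mathbf 0,\mathbf 1$ for the bottom and top of a cube (of whatever dimension is relevant), we have $W(\mathbf 0)=F(U_1,\dots,U_n)$ and $W(\mathbf 1)=F(V_1,\dots,V_n)$, and the edge $W(\mathbf 0)\r W(\mathbf 1)$ is exactly $F(g_1,\dots,g_n)$. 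Hence it is enough to show: \emph{if $W$ is cofibrant in $\C M^{\dos^n}_{\{1,\dots,n\}}$, then $W(\mathbf 0)\r W(\mathbf 1)$ is a weak equivalence in $\C M$.}

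I would prove this by induction on $n$, using the exponential law $\C M^{\dos^n}_S=(\C M^{\dos^{n-1}}_{S\setminus\{n\}})^\dos_{\{1\}}$ for $n\in S$ established above, with $S=\{1,\dots,n\}$. For $n\le1$ there is nothing to prove: for $n=0$ the map is an identity, and for $n=1$ a cofibrant object of $\C M^{\dos}_{\{1\}}$ is precisely a trivial cofibration between cofibrant objects. For $n\ge2$, write $W$ under the exponential law as a map $\alpha\colon W_0\r W_1$ of $(n-1)$-cubes, so that $W(\mathbf 0)=W_0(\mathbf 0)$ and $W(\mathbf 1)=W_1(\mathbf 1)$. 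Unwinding the definition of a cofibration in $(\C M')^\dos_{\{1\}}$, with $\C M'=\C M^{\dos^{n-1}}_{\{1,\dots,n-1\}}$, shows that $W_0$ is cofibrant in $\C M'$ and that $\alpha$ is a trivial cofibration in $\C M'$; by Remark~\ref{yyy}, trivial cofibrations in $\C M'$ coincide with those in the Reedy structure $\C M^{\dos^{n-1}}_\varnothing$, so $\alpha$ is in particular a levelwise weak equivalence in $\C M$. Now factor
\[
W(\mathbf 0)=W_0(\mathbf 0)\To W_0(\mathbf 1)\To W_1(\mathbf 1)=W(\mathbf 1),
\]
where the first arrow is the bottom-to-top edge of $W_0$, a weak equivalence by the induction hypothesis, and the second is the component of $\alpha$ at $\mathbf 1$, a weak equivalence by the previous sentence; since $\dos^n$ is a poset, this composite is $W(\mathbf 0)\r W(\mathbf 1)$, which is therefore a weak equivalence. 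This proves the claim, and with it the proposition.

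I expect the only genuine work to be bookkeeping: matching the model structures across the exponential law so that ``$W$ cofibrant'' really does unwind into the two stated conditions on $W_0$ and $\alpha$, and checking that the long edge of the $n$-cube is the displayed composite. All the mathematical content is in the short factorization together with the induction, and I do not anticipate any real obstacle.
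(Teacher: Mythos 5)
Your argument is correct, and it shares the paper's skeleton up to the last step: the same reduction via Ken Brown's lemma, and the same key observation that, because $F$ is cofibrant in $\C M^{\C N^n}$ in the sense of Definition~\ref{loco} with $S=\{1,\dots,n\}$, the cube $F(g_1,\dots,g_n)$ has all its latching maps away from $(0,\dots,0)$ trivial cofibrations. Where you diverge is in how you extract the long edge $W(\mathbf 0)\r W(\mathbf 1)$: you run an induction on $n$ through the exponential law $\C M^{\dos^n}_{\{1,\dots,n\}}=(\C M^{\dos^{n-1}}_{\{1,\dots,n-1\}})^\dos_{\{1\}}$, factoring the diagonal as the diagonal of the bottom face followed by a component of a Reedy trivial cofibration (hence a levelwise weak equivalence, via Remark~\ref{yyy}). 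The paper instead argues in one step: the map from the constant diagram on $F(Y_1,\dots,Y_n)$ into the cube is a Reedy trivial cofibration precisely because of the latching-map condition, and evaluating at the terminal vertex $(1,\dots,1)$ --- i.e.\ taking the colimit over $\dos^n$, which is left Quillen for the Reedy structure \cite[Proposition 15.3.11 (1)]{hirschhorn} --- yields that $F(Y_1,\dots,Y_n)\r F(Z_1,\dots,Z_n)$ is itself a trivial cofibration. The paper's version is shorter once that citation is in hand and gives the marginally stronger conclusion (a trivial cofibration rather than merely a weak equivalence); yours is more elementary and self-contained given the exponential-law bookkeeping already set up earlier in the section. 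Your appeal to Remark~\ref{after} for preservation of cofibrant objects matches what the paper takes for granted.
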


\begin{proof}
Given trivial cofibrations between cofibrant objects $g_i\colon Y_i\r Z_i$ in $\C N$, $1\leq i\leq n$, the natural map $F(Y_{1},\dots, Y_{n})\r F(g_{1},\dots, g_{n})$ in $\C M^{\dos^{n}}$ from the constant diagram is a trivial cofibration since the latching map of $F(g_1,\dots,g_n)$ at any object different from $(0,\dots,0)$ is a trivial cofibration. This trivial cofibration in $\C M^{\dos^{n}}$, evaluated at $(1,\dots,1)\in\dos^{n}$, induces a trivial cofibration $F(Y_{1},\dots, Y_{n})\r F(Z_{1},\dots, Z_{n})$ in $\C M$ \cite[Proposition 15.3.11 (1)]{hirschhorn}. This proves that $F$ preserves trivial cofibrations between cofibrant objects. The result now follows from Ken Brown's lemma \cite[Lemma 1.1.12]{hmc}.
\end{proof}

As a consequence, weak equivalences between cofibrant objects in big functor categories are closed under horizontal composition.

\begin{corollary}\label{condon2}
Given weak equivalences between cofibrant objects $\tau\colon F\r G$ in $\C M^{\C M^{p}}$ and $\tau'\colon F'\r G'$ in $\C M^{\C M^{q}}$, the composite $\tau(\st{i-1}\dots,\tau',\st{p-i}\dots)\colon F(\st{i-1}\dots,F',\st{p-i}\dots)\r G(\st{i-1}\dots,G',\st{p-i}\dots)$ is a weak equivalence in $\C M^{\C M^{p+q-1}}$ for any $1\leq i\leq p$ and $q\geq 0$. 
\end{corollary}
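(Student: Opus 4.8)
The plan is to reduce to a pointwise statement and then split the horizontal composite in two, handling one half with Lemma~\ref{condon} and the other with the characterization of weak equivalences in big functor categories supplied by Remark~\ref{after}.

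First I would use Remark~\ref{after}: it suffices to show that $\tau(\st{i-1}{\dots},\tau',\st{p-i}{\dots})$ becomes a weak equivalence in $\C M$ after evaluation at an arbitrary tuple of cofibrant objects $(X_1,\dots,X_{p+q-1})$ of $\C M$. Set $W=(X_i,\dots,X_{i+q-1})$, a cofibrant object of $\C M^q$ in the product model structure. Since $F'$ and $G'$ are cofibrant in $\C M^{\C M^q}$, they preserve cofibrant objects (Remark~\ref{after}), so $F'(W)$ and $G'(W)$ are cofibrant in $\C M$, and, $\tau'$ being a weak equivalence, $\tau'(W)\colon F'(W)\r G'(W)$ is a weak equivalence between cofibrant objects of $\C M$.

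Next, using naturality of $\tau$, I would factor the evaluation of $\tau(\st{i-1}{\dots},\tau',\st{p-i}{\dots})$ at $(X_1,\dots,X_{p+q-1})$ as the composite of
\[
F\bigl(X_1,\dots,X_{i-1},\tau'(W),X_{i+q},\dots,X_{p+q-1}\bigr)
\]
with $\tau$ evaluated at the object $\bigl(X_1,\dots,X_{i-1},G'(W),X_{i+q},\dots,X_{p+q-1}\bigr)$ of $\C M^p$. The source and target of the first map are the values of $F$ at the two tuples of cofibrant objects of $\C M^p$ obtained by placing $F'(W)$, resp.\ $G'(W)$, in the $i$-th slot; the map between those tuples is the identity in every slot except the $i$-th, where it is $\tau'(W)$, hence a weak equivalence in the product model structure on $\C M^p$. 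As $F$ is cofibrant in $\C M^{\C M^p}$, Lemma~\ref{condon} shows the first map is a weak equivalence in $\C M$. The second map is $\tau$ evaluated at a tuple of cofibrant objects of $\C M$, hence a weak equivalence in $\C M$ by Remark~\ref{after}. Composing, we obtain the desired weak equivalence, and the corollary follows.

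I expect no real obstacle here: the statement is a corollary in the strict sense. The only points needing a moment's care are the appeal to naturality of $\tau$ to split the horizontal composite into these two maps, and the bookkeeping that guarantees every object fed into Lemma~\ref{condon} or Remark~\ref{after} is cofibrant --- the latter is exactly what the ``cofibrant functors preserve cofibrant objects'' part of Remark~\ref{after} provides. The case $q=0$, in which $F'$ and $G'$ are just cofibrant objects of $\C M$ and $\tau'$ a weak equivalence between them, is covered verbatim by the same argument.
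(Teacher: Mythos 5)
Your proof is correct and is essentially the argument the paper intends: the paper leaves the corollary as an immediate consequence of Lemma \ref{condon}, and your factorization of the horizontal composite into $F(\dots,\tau'(W),\dots)$ followed by $\tau$ evaluated at a cofibrant tuple, together with the pointwise characterization of weak equivalences from Remark \ref{after}, is exactly the standard way to fill in that deduction.
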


Colimit preserving cofibrant objects in big functor categories are closed under composition. 

\begin{lemma}\label{compcof}
Given cofibrant objects $F$  in $\C M^{\C M^{p}}$ and $G$ in $\C M^{\C M^{p}}$, if $F$ preserves colimits in the $i^{\text{th}}$ variable then $F(\st{i-1}\dots, G,\st{p-i}\dots)$ is cofibrant in $\C M^{\C M^{p+q-1}}$, $1\leq i\leq p$, $q\geq 0$.
\end{lemma}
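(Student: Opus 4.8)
The plan is to unwind Definition \ref{loco} and reduce the claim to the (already established) cofibrancy of $F$ and $G$ together with the only structural feature we are given, namely that $F$ preserves colimits in its $i^{\text{th}}$ variable. Fix cofibrations between cofibrant objects $g_1,\dots,g_{p+q-1}$ in $\C M$ and a subset $S\subseteq\{1,\dots,p+q-1\}$ with $g_j$ a trivial cofibration for every $j\in S$; we must show that $F(\st{i-1}\dots,G,\st{p-i}\dots)(g_1,\dots,g_{p+q-1})$ is a cofibration from the initial object in $\C M^{\dos^{p+q-1}}_S$. Split $\{1,\dots,p+q-1\}$ into the $q$ indices $\{i,\dots,i+q-1\}$ that feed the $i^{\text{th}}$ slot of $F$ through $G$ and the remaining $p-1$ indices, and write $S=S_F\sqcup S_G$ accordingly. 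By the exponential law and the inductive description of the model structures $\C M^{\dos^n}_S$ given above, $\C M^{\dos^{p+q-1}}_S\cong(\C M^{\dos^{p-1}}_{S_F})^{\dos^q}_{S_G}$, and under this identification $F(\st{i-1}\dots,G,\st{p-i}\dots)(g_1,\dots,g_{p+q-1})$ is the composite $\bar F\circ H$, where $H:=G(g_i,\dots,g_{i+q-1})\colon\dos^q\to\C M$ and $\bar F\colon\C M\to\C M^{\dos^{p-1}}$ is $W\mapsto F(g_1,\dots,g_{i-1},W,g_{i+q},\dots,g_{p+q-1})$.

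Since $G$ is cofibrant, $H$ is a cofibration from the initial object in $\C M^{\dos^q}_{S_G}$, so for each $x\in\dos^q$ the latching map $\lambda_x\colon L_xH\to H(x)$ is a cofibration between cofibrant objects in $\C M$, and is a trivial cofibration whenever $x$ has a coordinate equal to $1$ in an $S_G$-slot. Because $F$ preserves colimits in its $i^{\text{th}}$ variable, $\bar F$ preserves all colimits (they are computed objectwise in $\C M^{\dos^{p-1}}$), hence preserves the initial object and latching objects; thus the $\dos^q$-latching object of $\bar F\circ H$ at $x$ is $\bar F(L_xH)$ and the corresponding latching map is $\bar F(\lambda_x)$. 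So it remains to check, for each $x\in\dos^q$, that $\bar F(\lambda_x)$ is a cofibration in $\C M^{\dos^{p-1}}_{S_F}$, which is moreover a trivial cofibration when $x$ has a $1$ in an $S_G$-slot.

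This last point is exactly cofibrancy of $F$ in disguise. Viewing a natural transformation of $\dos^{p-1}$-diagrams as a $\dos^p$-diagram by adjoining a coordinate for its direction, $\bar F(\lambda_x)$ is the $\dos^p$-diagram $F(g_1,\dots,g_{i-1},\lambda_x,g_{i+q},\dots,g_{p+q-1})$ with $\lambda_x$ in the $i^{\text{th}}$ slot; since all of $g_1,\dots,g_{i-1},\lambda_x,g_{i+q},\dots,g_{p+q-1}$ are cofibrations between cofibrant objects, cofibrancy of $F$ makes this a cofibration from the initial object in $\C M^{\dos^p}_{S'}$ for any admissible $S'$, and we choose $S'$ to contain the $S_F$-indices and also the $\lambda_x$-direction precisely when $\lambda_x$ is trivial, i.e.\ when $x$ has a $1$ in an $S_G$-slot. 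Unwinding $\C M^{\dos^p}_{S'}\cong(\C M^{\dos^{p-1}}_{S_F})^{\dos}_{S'\cap\{\lambda_x\text{-dir}\}}$ says precisely that $\bar F(\lambda_x)$ is a cofibration in $\C M^{\dos^{p-1}}_{S_F}$, a trivial one when $\lambda_x$ is trivial; here one also uses that a cofibration in such a localized Reedy structure is in particular a Reedy cofibration, and that trivial cofibrations do not depend on the decorating subset (Remark \ref{yyy}). This gives that $\bar F\circ H$ is a cofibration from the initial object in $(\C M^{\dos^{p-1}}_{S_F})^{\dos^q}_{S_G}=\C M^{\dos^{p+q-1}}_S$, and since $(g_1,\dots,g_{p+q-1})$ and $S$ were arbitrary, $F(\st{i-1}\dots,G,\st{p-i}\dots)$ is cofibrant in $\C M^{\C M^{p+q-1}}$. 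I expect the only real friction to be the bookkeeping of the decorating subsets through the iterated exponential laws; the homotopical content is concentrated in the single identity that $\bar F$ carries latching objects of $H$ to latching objects of $\bar F\circ H$, which is where colimit-preservation of $F$ in the $i^{\text{th}}$ variable enters.
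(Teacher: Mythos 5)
Your argument is correct and is essentially the paper's own proof: the paper's one-sentence justification states precisely the identity you derive — that the latching map of $F(\st{i-1}\dots,G,\st{p-i}\dots)(g_1,\dots,g_{p+q-1})$ at $x$ equals the latching map of $F(g_1,\dots,\lambda_x,\dots,g_{p+q-1})$ at the point with $1$ in the $i^{\text{th}}$ slot, where $\lambda_x$ is the latching map of $G(g_i,\dots,g_{i+q-1})$ — and then concludes from cofibrancy of $F$ and $G$ exactly as you do. Your version merely makes explicit, via the iterated exponential law and the colimit-preservation of $\bar F$, the bookkeeping that the paper leaves implicit.
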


\begin{proof}
This follows form the fact that, given maps $g_{j}\colon Y_{j}\r Z_{j}$ in $\C M$, $1\leq j\leq p+q-1$,  the latching map of $F(g_{1},\dots, g_{i-1},G(g_{i},\dots, g_{i+q-1}),g_{i+q},\dots, g_{p+q-1})$ at  $x=(x_{1},\dots, x_{p+q-1})\in\dos^{p+q-1}$ is the latching map of
$F(g_{1},\dots, g_{i-1},h,g_{i+q},\dots, g_{p+q-1})$
at $(x_{1},\dots, x_{i-1},1,x_{i+q},\dots, x_{p+q-1})\in\dos^{p}$, where $h$ denotes the latching map of $G(g_{i},\dots, g_{i+q-1})$ at $(x_{i},\dots, x_{i+q-1})\in\dos^{q}$.
\end{proof}

\begin{proposition}\label{sonex2}
If $f'\colon\mathcal O\into\mathcal P$ is a cofibration in $\operad{\C V}$ and $\mathcal O$ is an excellent operad such that $\mathcal O(n)$ is cofibrant for all $n\geq 0$, then so is $\mathcal P$.
\end{proposition}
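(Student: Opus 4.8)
The plan is to reduce to a single push-out of operads of the shape \eqref{pop} and then run the transfinite induction used in the proofs of Propositions \ref{mensgen} and \ref{masgen} and Theorem \ref{cuandoex}, with Proposition \ref{transAcof} replaced by Proposition \ref{horror}. Every cofibration in $\operad{\C V}$ is a retract of a transfinite composition of push-outs of free operad maps $\mathcal F(i)$ on generating cofibrations $i$ of $\C V^{\mathbb N}$, and the class of operads that are both excellent and aritywise cofibrant is closed under retracts and under transfinite composition: for the latter, if $\mathcal O=\colim_{\alpha}\mathcal O_{\alpha}$ with each $\mathcal O_{\alpha}\to\mathcal O_{\alpha+1}$ covered by the single push-out case below, then for any $\mathcal O$-algebra $A$ Proposition \ref{horror} identifies $\mathcal O_{A}=\colim_{\alpha}(\mathcal O_{\alpha})_{A}$ with all transition maps cofibrations, so $\mathcal O_{A}$ is cofibrant whenever $A$ has cofibrant underlying object, and $\mathcal O_{\varphi}=\colim_{\alpha}(\mathcal O_{\alpha})_{\varphi}$ is a weak equivalence by \cite[Proposition 15.10.12 (1)]{hirschhorn}. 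It therefore suffices to treat $\mathcal P$ sitting in \eqref{pop}, and, exactly as at the start of the proof of Proposition \ref{mensgen}, we may replace $\mathcal F(f)$ by the free operad map on the levelwise cofibration $\mathcal O\into\mathcal O\cup_{U}V$ obtained by pushing $f\colon U\to V$ out along $\bar g\colon U\to\mathcal O$ in $\C V^{\mathbb N}$, using that $\mathcal F$ preserves push-outs; since the $\mathcal O(n)$ are cofibrant, this lets us assume $f$ is a cofibration between cofibrant objects in $\C V^{\mathbb N}$. Aritywise cofibrancy of $\mathcal P$ is then immediate from the explicit construction of \eqref{pop} in \cite[\S5]{htnso}: each $\mathcal O(n)\to\mathcal P(n)$ is a transfinite composition of push-outs of cofibrations between cofibrant objects of $\C V$, built from the $\mathcal O(m)$ and iterated push-out products of components of $f$, which are such by the push-out product axiom.

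For Definition \ref{excellent} (1), let $A$ be a $\mathcal P$-algebra in $\C C$ with underlying cofibrant object; restricting scalars along $f'$ we regard $A$ as an $\mathcal O$-algebra with the same cofibrant underlying object, so $\mathcal O_{A}$ is cofibrant in $\C C^{\C C^{(\mathbb N)}}$ by excellence of $\mathcal O$. Proposition \ref{horror} presents $f'_{A}\colon\mathcal O_{A}\to\mathcal P_{A}$ as a transfinite composition of push-outs of maps $\tilde\Phi_{t}$, where $\tilde\Phi_{t}(n)$ is a coproduct of trees obtained by grafting copies of $\mathcal O_{A}$ at the odd vertices and copies of the natural transformation determined by $f(k)$ at the even vertices. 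Each $f(k)$, viewed in $\C V^{\C V^{k}}$, is a cofibration between cofibrant objects by Remark \ref{after} (since $f$ is such in $\C V^{\mathbb N}$), and grafting is compatible with the homotopical notions of Definition \ref{loco} by Corollary \ref{condon2} and Lemma \ref{compcof}; hence each tree, and so $\tilde\Phi_{t}(n)$ and $\Phi_{t}$, is a cofibration between cofibrant objects, and $\mathcal P_{A}=\colim_{t}\mathcal P_{A,t}$ is cofibrant.

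For Definition \ref{excellent} (2), let $\varphi\colon A\to C$ be a weak equivalence of $\mathcal P$-algebras in $\C C$ with underlying cofibrant objects. Viewed in $\algebra{\mathcal O}{\C C}$, $\mathcal O_{\varphi}\colon\mathcal O_{A}\to\mathcal O_{C}$ is a weak equivalence between cofibrant objects by excellence of $\mathcal O$. Applying Proposition \ref{horror} to $A$ and to $C$ gives compatible transfinite presentations of $\mathcal O_{A}\to\mathcal P_{A}$ and $\mathcal O_{C}\to\mathcal P_{C}$, so $\mathcal P_{\varphi}$ is the colimit of a ladder whose $t$-th rung $\mathcal P_{A,t}\to\mathcal P_{C,t}$ is obtained by horizontal push-outs from a square in which the two arrows pointing $\rightarrow$ are cofibrations between cofibrant objects (by the previous paragraph) and whose right-hand square is aritywise the factor-preserving map between coproducts induced by $\mathcal O_{\varphi}$ at the odd vertices and the identity on the $f(k)$ at the even vertices, which is a weak equivalence between cofibrant objects by Corollary \ref{condon2}. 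The cube lemma \cite[Lemma 5.2.6]{hmc} gives inductively that each $\mathcal P_{A,t}\to\mathcal P_{C,t}$ is a weak equivalence, and \cite[Proposition 15.10.12 (1)]{hirschhorn}, in the transfinite form used in the proof of Proposition \ref{masgen}, shows that $\mathcal P_{\varphi}$ is a weak equivalence. The main obstacle is the bookkeeping for the reduction step: making precise the identification $\mathcal O_{A}=\colim_{\alpha}(\mathcal O_{\alpha})_{A}$ of enveloping functor-operads along a transfinite composition of operads, and, inside the single push-out case, verifying that the grafting construction of Proposition \ref{horror}, with corks allowed at odd levels and the configuration \eqref{corolla2} forbidden, is genuinely compatible with Definition \ref{loco}, so that the trees appearing in $\tilde\Phi_{t}(n)$ really are cofibrations between cofibrant objects.
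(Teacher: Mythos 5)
Your proposal is correct and follows essentially the same route as the paper: reduce to a single push-out \eqref{pop} with $f$ a cofibration between cofibrant objects (using the aritywise cofibrancy of $\mathcal O$), then apply Proposition \ref{horror} to present $\mathcal P_A$ and $\mathcal P_\varphi$ as (ladders of) transfinite compositions whose attaching cells $\tilde\Phi_t$ are handled by Lemma \ref{compcof} together with the push-out product axiom for part (1) and by Corollary \ref{condon2} plus the cube lemma and \cite[Proposition 15.10.12 (1)]{hirschhorn} for part (2). The only cosmetic discrepancy is that for the cofibrancy of $\tilde\Phi_t$ you cite Corollary \ref{condon2} (which concerns weak equivalences) where the paper invokes the push-out product axiom; the intended content is the same.
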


\begin{proof}
Like in the proof of Proposition \ref{mensgen}, it is enough to assume that $f'$ fits into a push-out diagram \eqref{pop} where $f$ is a sequence of cofibrations between cofibrant objects. The objects $\mathcal P(n)$ are cofibrant by \cite[Corollary 3.7]{htnso2}. If $A$ is a $\mathcal P$-algebra with underlying cofibrant object then, by Proposition \ref{horror}, $\mathcal P_A$ is the colimit of 
\[
\xymatrix@C=10pt{
	\mathcal O_{A}=\mathcal P_{A,0}\ar[rr]^-{\Phi_1}&&\mathcal P_{A,1}\ar[r]&\cdots\ar[r]&\mathcal P_{A,t-1}\ar[rr]^-{\Phi_t}&&\mathcal P_{A,t}\ar[r]&\cdots
}
\]
Here $\mathcal O_A$ is cofibrant in $\C C^{\C C^{(\mathbb N)}}$ since $\mathcal O$ is excellent. 
The map $\Phi_t$ is a push-out of $\tilde \Phi_t$, which is a cofibration between cofibrant objects in $\C C^{\C C^{(\mathbb N)}}$ by Lemma \ref{compcof} and the push-out product axiom, since $f$ is a sequence of cofibrations between cofibrant objects and $\mathcal O_A$ is cofibrant. We deduce that $\Phi_t$ is a cofibration for all $t\geq 1$, $\mathcal P_{A,t}$ is cofibrant for $t\geq 0$, the transfinite composition of the previous diagram is also a cofibration, and $\mathcal P_A$ is cofibrant.

If $\varphi\colon A\st{\sim}\r C$ is a weak equivalence of $\mathcal P$-algebras with underlying cofibrant objects in $\C C$, then $\mathcal P_\varphi$ is the colimit of a diagram, 
\[
\xymatrix@C=10pt{
	\mathcal O_A=\mathcal P_{A,0}\ar[rr]^-{\Phi_1^{A}}
	\ar@<-4.5ex>[d]_{\mathcal O_\varphi}^\sim	\ar@<3.5ex>[d]^{\mathcal P_{\varphi,{0}}}
	&&\mathcal P_{A,1}\ar[r]\ar[d]^{\mathcal P_{\varphi,{1}}}&\cdots\ar[r]&\mathcal P_{A,t-1}\ar[rr]^-{\Phi_t^{A}}\ar[d]^{\mathcal P_{\varphi,{t-1}}}&&\mathcal P_{A,t}\ar[r]\ar[d]^{\mathcal P_{\varphi,{t}}}&\cdots\\
	\mathcal O_{C}=\mathcal P_{C,0}\ar[rr]^-{\Phi_1^{C}}&&\mathcal P_{C,1}\ar[r]&\cdots\ar[r]&\mathcal P_{C,t-1}\ar[rr]^-{\Phi_t^{C}}&&\mathcal P_{C,t}\ar[r]&\cdots
}
\]
such that $\mathcal P_{\varphi,t}$ is obtained by taking horizontal push-outs in the commutative diagram
\[
\xymatrix{
	\mathcal P_{A,t-1}\ar[d]_{\mathcal P_{\varphi,t-1}}&\bullet\ar[l]_-{\Psi_t^{A}}\ar[r]^-{\tilde{\Phi}_t^{A}}\ar[d]&\bullet\ar[d]\\
	\mathcal P_{C,t-1}&\bullet\ar[l]_-{\Psi_t^{C}}\ar[r]^-{\tilde{\Phi}_t^{C}}&\bullet
}
\]

Aritywise, the commutative square on the right is
the factor-preserving map between coproducts $\tilde{\Phi}_t^{A}(n)\r \tilde{\Phi}_t^{C}(n)$ in $\mor{\C C}^{\C C^n}$  defined by $\mathcal O_\varphi$ on odd inner vertices, e.g.
	\[
	\begin{tikzpicture}[level distance=5mm, sibling distance=5mm] 
	\tikzstyle{level 2}=[sibling distance=10mm] 
	\tikzstyle{level 3}=[sibling distance=5mm] 
	\tikzstyle{level 4}=[sibling distance=5mm] 
	\node {} [grow'=up]
	child{[fill] circle (2pt)
		child{edge from parent [decorate,decoration={snake,amplitude=.4mm,segment length=2mm,post length=.5mm,pre length=1.5mm}] {}}
		child{[fill] circle (2pt)
			child{[fill] circle (2pt)
				child{[fill] circle (2pt)
					child{[fill] circle (2pt) node [above] (Au) {$\scriptstyle A$}}
					child{[fill] circle (2pt)
						child{edge from parent [decorate,decoration={snake,amplitude=.4mm,segment length=2mm,post length=.5mm,pre length=1.5mm}] {}}
						node [right] (OA1u) {$\scriptstyle \mathcal O_A(1)$}}
					node [left] {$\scriptstyle f(2)$}}
				node [left] (OA1d) {$\scriptstyle \mathcal O_A(1)$}}
			child{[fill] circle (2pt) node [above] (Ad) {$\scriptstyle A$}}
			child{[fill] circle (2pt)
				child{edge from parent [decorate,decoration={snake,amplitude=.4mm,segment length=2mm,post length=.5mm,pre length=1.5mm}] {}}
				child{edge from parent [decorate,decoration={snake,amplitude=.4mm,segment length=2mm,post length=.5mm,pre length=1.5mm}] {}}
				node [below right] (OA2u) {$\scriptstyle \mathcal O_A(2)$}}
			node [below right] (U) {$\scriptstyle f(3)$}}
		node [below right] (OA2d) {$\scriptstyle \mathcal O_A(2)$}};
		\draw (7,0) node {} [grow'=up]
		child{[fill] circle (2pt)
			child{edge from parent [decorate,decoration={snake,amplitude=.4mm,segment length=2mm,post length=.5mm,pre length=1.5mm}] {}}
			child{[fill] circle (2pt)
				child{[fill] circle (2pt)
					child{[fill] circle (2pt)
						child{[fill] circle (2pt) node [above] (Bu) {$\scriptstyle C$}}
						child{[fill] circle (2pt)
							child{edge from parent [decorate,decoration={snake,amplitude=.4mm,segment length=2mm,post length=.5mm,pre length=1.5mm}] {}}
							node [right] (OB1u) {$\scriptstyle \mathcal O_C(1)$}}
						node [left] {$\scriptstyle f(2)$}}
					node [left] (OB1d) {$\scriptstyle \mathcal O_C(1)$}}
				child{[fill] circle (2pt) node [above] (Bd) {$\scriptstyle C$}}
				child{[fill] circle (2pt)
					child{edge from parent [decorate,decoration={snake,amplitude=.4mm,segment length=2mm,post length=.5mm,pre length=1.5mm}] {}}
					child{edge from parent [decorate,decoration={snake,amplitude=.4mm,segment length=2mm,post length=.5mm,pre length=1.5mm}] {}}
					node [below right] (OB2u) {$\scriptstyle \mathcal O_C(2)$}}
				node [below right] (U) {$\scriptstyle f(3)$}}
			node [below right] (OB2d) {$\scriptstyle \mathcal O_C(2)$}};
	\path[opacity=.5, very thick,bend left =10] (OA1u) edge [->] node [above, opacity=1] {$\scriptstyle \mathcal O_\varphi(1)$} (OB1u.north);
	\path[opacity=.5, very thick,bend left =10] (OA1d.north) edge [->] node [above, opacity=1] {$\scriptstyle \mathcal O_\varphi(1)$} (OB1d);
		\path[opacity=.5, very thick,bend left =10] (Au) edge [->] node [above, opacity=1] {$\scriptstyle \varphi$} (Bu);
		\path[opacity=.5, very thick,bend right =10] (Ad) edge [->] node [above, opacity=1] {$\scriptstyle \varphi$} (Bd.south);
	\path[opacity=.5, very thick,bend right =10] (OA2u) edge [->] node [below, opacity=1] {$\scriptstyle \mathcal O_\varphi(2)$} (OB2u);
	\path[opacity=.5, very thick,bend right =10] (OA2d) edge [->] node [above, opacity=1] {$\scriptstyle \mathcal O_\varphi(2)$} (OB2d.south);
	\end{tikzpicture}
	\]
	Hence $\tilde{\Phi}_t^{A}\r \tilde{\Phi}_t^{C}$ is a weak equivalence by Corollary \ref{condon2}. Now this result follows as in the last paragraph of Proposition \ref{masgen}.
\end{proof}

\begin{remark}\label{ende}
In some results above, we have assumed that tensor units are cofibrant or closely related things, e.g.~that cofibrant operads have underlying cofibrant sequences. These hypotheses can be relaxed in the following way. Proofs are essentially the same, mutatis mutandis, using the results on pseudo-cofibrant and $\unit$-cofibrant objects in \cite[Appendices A and B]{htnso2}, which show that these kinds of objects share many properties with cofibrant objects, assuming certain axioms when necessary. 	Cofibrant and $\unit$-cofibrant objects are always pseudo-cofibrant.  Cofibrant and pseudo-cofibrant objects coincide if and only if the tensor unit is cofibrant \cite[Lemma A.7]{htnso2}.
	We recently learnt that pseudo-cofibrant objects were previously introduced in \cite{mmmc}, where they are called semicofibrant.

	If $\C M$ is a monoidal model category, an object $F$ in $\C M^{\dos^n}_S$ is said to be \emph{$\unit$-cofibrant} or \emph{pseudo-cofibrant} if there is a cofibration $X\r F$ from the constant diagram on an $\unit$-cofibrant or pseudo-cofibrant object $X$ in $\C M$. This is equivalent to say that $F(0,\dots,0)$ is $\unit$-cofibrant or pseudo-cofibrant and that the relative latching map of $F$ at any $(x_1,\dots, x_n)\in \dos^n$, $(x_1,\dots, x_n)\neq (0,\dots,0)$, is a cofibration, and moreover a trivial cofibration if $x_i=1$ for some $i\in S$. The values and latching objects of $F$ are $\unit$-cofibrant or pseudo-cofibrant in $\C M$. 
	Under the strong unit axiom, any weak equivalence in $\C M^{\dos^n}_\varnothing$  between pseudo-cofibrant objects induces weak equivalences between latching objects. We define $\unit$-cofibrant and pseudo-cofibrant objects in big functor categories  $\C M^{\C N^n}$, $n\geq 0$, and $\C M^{\C N^{(\mathbb N)}}$ as in Definition \ref{loco}.
	
	We can define \emph{strong} homotopical notions in big functor categories, with $\C N$ a monoidal model category, by just requiring the sources (and hence the targets) of the cofibrations $g_i$ in Definition \ref{loco} to be cofibrant or $\unit$-cofibrant. Moreover, we can define \emph{very strong} homotopical notions 
	by more generally allowing the sources (and hence the targets) of the cofibrations $g_i$ to be pseudo-cofibrant. For instance, a map is a strong (resp.~very strong) weak equivalence in $\C M^{\C N^n}$ if and only if it yields a weak equivalence in $\C M$ when evaluated at $n$ ($\unit$-)cofibrant (resp.~pseudo-cofibrant) objects in $\C N$. 
	
	If $\C M$ is a monoidal model category, the identity functor is strongly $\unit$-cofibrant and very strongly pseudo-cofibrant in $\C M^{\C M}$. These properties are shared by the $n$-fold tensor product in $\C M^{\C M^n}$, $n\geq 2$, by the push-out product axiom. Moreover, if $Y$ is cofibrant/$\unit$-cofibrant/pseudo-cofibrant in $\C M$ then the functor $(X_1,\dots,X_n)\mapsto Y\otimes\bigotimes_{i=1}^nX_i$ is very strongly cofibrant/strongly $\unit$-cofibrant/very strongly pseudo-cofibrant in $\C M^{\C M^n}$, $n\geq 0$. Furthermore, if $f$ is a (trivial) cofibration in $\C M$  then the natural transformation $(X_1,\dots,X_n)\mapsto f\otimes\bigotimes_{i=1}^nX_i$ is a very strong (trivial) cofibration, $n\geq 0$. If $f$ is a weak equivalence between pseudo-cofibrant objects in $\C M$ and $\C M$ satisfies the strong unit axiom, then the previous natural transformation is a very strong weak equivalence.
	
	In Proposition \ref{mensgen}, if we assume that the objects $z(\mathcal O(n))$ are pseudo-cofibrant in $\C C$, $n\geq 0$, we derive that the enveloping functor-operad $\mathcal O_A$ of a cofibrant $\mathcal O$-algebra $A$ is very strongly pseudo-cofibrant in $\C C^{\C C^{(\mathbb N)}}$, and that any cofibration $A\r B$ between cofibrant $\mathcal O $-algebras induces a very strong cofibration between the enveloping functor-operads $\mathcal O_A\r \mathcal O_B$. Moreover, if $A\r B$ fits into a push-out \eqref{algebrapo} with $f$ a cofibration between pseudo-cofibrant objects then the objects $\mathcal O_{B,t}$ in Proposition \ref{transAcof} are also pseudo-cofibrant. 	
	A sufficient condition is that $\mathcal O$ is a sequence of ($\unit$-)cofibrant objects in $\C V$, since $z$ is a strong monoidal left Quillen functor. If, moreover, $\mathcal O(n)$ is cofibrant (resp.~$\unit$-cofibrant) for a certain $n\geq 0$ then $\mathcal O_A(n)$ is very strongly cofibrant (resp.~strongly $\unit$-cofibrant) in $\C C^{\C C^n}$. This applies to the associative operad, which is cofibrant in arity $0$ and $\unit$-cofibrant in higher arities. It also applies to any cofibrant operad $\mathcal O$, since $\mathcal O(1)$ is $\unit$-cofibrant and $\mathcal O(n)$ is cofibrant for all $n\neq 1$, see \cite[Corollary C.3]{htnso2}. 
	
	In Proposition \ref{masgen}, $\phi_{\eta_{A}}$ is actually a very strong weak equivalence. Moreover, if $\C C$ satisfies the strong unit axiom then it is enough  to assume that  $z(\phi)\colon z(\mathcal O)\r z(\mathcal P)$ is a sequence of weak equivalences between pseudo-cofibrant objects. This condition on $z(\phi)$ holds if $\phi\colon\mathcal O\r\mathcal P$ is a weak equivalence between operads with underlying ($\unit$-)cofibrant sequences and $z$ satisfies the $\unit$-cofibrant axiom (which says that $z$ preserves weak equivalences between ($\unit$-)cofibrant objects).
	
	The transfer of (very) strong homotopical notions in big functor categories along Quillen pairs $L\colon\C M\rightleftarrows \C N\colon R$ between monoidal model categories works as follows. The `functor' $\C M^{\C M^n}\rightarrow\C N^{\C M^n}$ induced by left composition with $L$ preserves (very) strong cofibrations and trivial cofibrations. This only uses that $L$ is a left Quillen functor. If $L$ satisfies the pseudo-cofibrant axiom, then $L$ takes $\unit$-cofibrant objects to pseudo-cofibrant objects, so left composition with $L$ takes (very) strong $\unit$-cofibrant objects to (very) strong pseudo-cofibrant objects. Moreover, under the pseudo-cofibrant axiom,  right composition with cartesian powers of $L$, $\C N^{\C N^n}\r\C N^{\C M^n}$, takes each very strong homotopical notion to the corresponding strong homotopical notion. This is enough to ensure that Proposition \ref{masgen1/2} remains true if we replace the cofibrancy condition on the tensor units of $\C V$ and $\C W$ with the requirement that $\C V$, $\C W$, $\C C$ and $\C D$ satisfy the strong unit axiom and $F$, $\bar F$, $z_{\C C}$ and $z_{\C D}$ satisfy the pseudo-cofibrant and $\unit$-cofibrant axioms. We actually obtain that $\chi_{\mathcal O,A}$ is a strong weak equivalence.
	
	An operad $\mathcal O$ is \emph{very strongly pseudo-excellent} if the enveloping functor-operad $\mathcal O_A$ of an $\mathcal O$-algebra $A$ with underlying pseudo-cofibrant object is very strongly pseudo-cofibrant in $\C C^{\C C^{(\mathbb N)}}$ and any weak equivalence between such $\mathcal O$-algebras $\varphi\colon A\r C$ induces a very strong weak equivalence $\mathcal O_\varphi\colon\mathcal O_A\r\mathcal O_C$ in $\C C^{\C C^{(\mathbb N)}}$. This notion is neither stronger nor weaker than the notion of excellent operad. Under the strong unit axiom, the operads in Proposition \ref{sonex1} are also very strongly pseudo-excellent. Proposition \ref{marrown} holds if $\mathcal O$ is very stongly pseudo-excellent  and the $\mathcal O$-algebra $A$ has an underlying pseudo-cofibrant object ($B$ would just be pseudo-cofibrant in this case).  Theorem \ref{cuandoex} is true if $\C C$ satisfies the strong unit axiom, $\mathcal O$ is very strongly pseudo-excellent, and the underlying objects of $A$ and $C$ are just pseudo-cofibrant. 
	
	The pseudo-cofibrant version of Lemma \ref{condon} is more involved. If $\C C$ satisfies the strong unit axiom, $F\colon\C N^n\r\C M$ is very strongly pseudo-cofibrant in $\C M^{\C N^n}$ and, for $\tilde{\unit}$ a cofibrant replacement of the tensor unit, we have a natural isomorphism $F(\tilde{\unit}\otimes X_1,\dots,\tilde{\unit}\otimes X_n)\cong \tilde{\unit}^{\otimes n}\otimes F(X_1,\dots,X_n)$, then $F$ takes weak equivalences between pseudo-cofibrant objects $g_i\colon Y_i\r Z_i$, $1\leq i\leq n$, to a weak equivalence between pseudo-cofibrant objects $F(Y_1,\dots, Y_n)\r F(Z_1,\dots, Z_n)$. These hypotheses are satisfied by the components of the enveloping functor-operad of any $\mathcal O$-algebra $A$ with underlying pseudo-cofibrant object in $\C C$, provided the operad $\mathcal O$ is very strongly pseudo-excellent. Let us check the previous statement. The functor $\tilde{\unit}\otimes F$ is very strongly cofibrant in $\C M^{\C N^n}$, in particular it is cofibrant, so $\tilde{\unit}\otimes F(\tilde{\unit}\otimes g_1,\dots,\tilde{\unit}\otimes g_n)\cong \tilde{\unit}^{\otimes (n+1)}\otimes F(g_1,\dots,g_n)$ is a weak equivalence. Since $\tilde{\unit}^{\otimes (n+1)}$ is also a cofibrant replacement of the tensor unit, we deduce that $F(g_1,\dots,g_n)$ is also a weak equivalence. Its source and target are pseudo-cofibrant because $F$ is very strongly pseudo-cofibrant, so it takes pseudo-cofibrant values when evaluated at pseudo-cofibrant objects. The pseudo-version of Corollary \ref{condon2} also holds for very strong homotopical notions under these extra assumptions.
	
	The very strongly pseudo-cofibrant version of Lemma \ref{compcof} is obviously valid. Finally, Proposition \ref{sonex2} also holds for very strongly pseudo-excellent operads provided $\C C$ satisfies the strong unit axiom. It actually suffices that the objects $\mathcal O(n)$ are ($\unit$-)cofibrant, $n\geq 0$.
\end{remark}


\begin{thebibliography}{10}

\bibitem{ahoec}
M.~Batanin and M.~Weber, \emph{{Algebras of higher operads as enriched
  categories}}, Appl. Categ. Structures \textbf{19} (2011), no.~1, 93--135.

\bibitem{ahto}
C.~Berger and I.~Moerdijk, \emph{{Axiomatic homotopy theory for operads}},
  Comment. Math. Helv. \textbf{78} (2003), no.~4, 805--831.

\bibitem{mof}
B.~Fresse, \emph{{Modules over operads and functors}}, {Lecture Notes in
  Mathematics}, vol. 1967, Springer-Verlag, Berlin, 2009.

\bibitem{htmommc}
J.~E. Harper, \emph{{Homotopy theory of modules over operads and non-{$\Sigma$}
  operads in monoidal model categories}}, J. Pure Appl. Algebra \textbf{214}
  (2010), no.~8, 1407--1434.

\bibitem{hirschhorn}
P.~S. Hirschhorn, \emph{{Model categories and their localizations}},
  {Mathematical Surveys and Monographs}, vol.~99, American Mathematical
  Society, Providence, RI, 2003.

\bibitem{hmc}
M.~Hovey, \emph{{Model categories}}, {Mathematical Surveys and Monographs},
  vol.~63, American Mathematical Society, Providence, RI, 1999.

\bibitem{mmmc}
L.~G. {Lewis Jr.} and M.~A. Mandell, \emph{{Modules in monoidal model
  categories}}, J. Pure Appl. Algebra \textbf{210} (2007), no.~2, 395--421.

\bibitem{htnso}
F.~Muro, \emph{{Homotopy theory of nonsymmetric operads}}, Algebr. Geom. Topol.
  \textbf{11} (2011), 1541--1599.

\bibitem{htnso2}
\bysame, \emph{{Homotopy theory of non-symmetric operads, {II}: {C}hange of
  base category and left properness}}, Algebr. Geom. Topol. \textbf{14} (2014),
  229--281.

\bibitem{ammmc}
S.~Schwede and B.~Shipley, \emph{{Algebras and modules in monoidal model
  categories}}, Proc. London Math. Soc. (3) \textbf{80} (2000), no.~2,
  491--511.

\end{thebibliography}

\providecommand{\bysame}{\leavevmode\hbox to3em{\hrulefill}\thinspace}
\providecommand{\MR}{\relax\ifhmode\unskip\space\fi MR }
\providecommand{\MRhref}[2]{%
  \href{http://www.ams.org/mathscinet-getitem?mr=#1}{#2}
}
\providecommand{\href}[2]{#2}

\end{document}